\newtheorem{theo}{Theorem}[section]
\newtheorem{prop}[theo]{Proposition}
\newtheorem{lemma}[theo]{Lemma}
\newtheorem{defn}[theo]{Definition}
\newtheorem{rem}[theo]{Remark}
\newtheorem{ex}[theo]{Example}
\def\diaCrossP{\unitlength.08em
  \begin{minipage}{15\unitlength}
    \begin{picture}(15,15)
      \put(0,0){\vector(1,1){15}}
      \qbezier(15,0)(15,0)(10,5)
      \qbezier(5,10)(0,15)(0,15)
      \put(0,15){\vector(-1,1){0}}
    \end{picture}
  \end{minipage}
}
\def\diaCrossN{\unitlength.08em
  \begin{minipage}{15\unitlength}
    \begin{picture}(15,15)
      \put(15,0){\vector(-1,1){15}}
      \qbezier(0,0)(0,0)(5,5)
      \qbezier(10,10)(15,15)(15,15)
      \put(15,15){\vector(1,1){0}}
    \end{picture}
  \end{minipage}
}
\def\diaCircle{\unitlength.1em
  \begin{minipage}{15\unitlength}
    \begin{picture}(15,15)
      \put(7.5,10){\circle{8}}
      \put(7.5,0){\vector(0,1){5}}
       \put(7.5,5){\line(0,1){5.5}}
    \end{picture}
  \end{minipage}
}
\def\Ptwist{\unitlength.1em
  \begin{minipage}{15\unitlength}
    \begin{picture}(15,15)
     \put(0,7.5){\line(1,0){6}}
     \put(9,7.5){\line(1,0){6}}
      \put(4,9.5){\line(1,0){3}}
      \qbezier(7,9.5)(7,9.5)(9,5.5)
      \put(9, 5.5){\line(1,0){3}}
    \end{picture}
  \end{minipage}
}
\def\Ntwist{\unitlength.1em
  \begin{minipage}{15\unitlength}
    \begin{picture}(15,15)
     \put(0,7.5){\line(1,0){6}}
     \put(9,7.5){\line(1,0){6}}
      \put(4,5.5){\line(1,0){3}}
      \qbezier(7,5.5)(7,5.5)(9,9.5)
      \put(9, 9.5){\line(1,0){3}}
    \end{picture}
  \end{minipage}
}
\tikzset{->-/.style={decoration={
  markings,
  mark=at position .5 with {\arrow{>}}},postaction={decorate}}}
\tikzset{-<-/.style={decoration={
  markings,
  mark=at position .5 with {\arrow{<}}},postaction={decorate}}}
\begin{document}
\title{An Alexander polynomial for MOY graphs}
\author{Yuanyuan Bao \and Zhongtao Wu}
\address{
Graduate School of Mathematical Sciences, the University of Tokyo, 3-8-1 Komaba, Tokyo 153-8914, Japan
}
\email{bao@ms.u-tokyo.ac.jp}

 \address{
Department of Mathematics, the Chinese University of Hong Kong, Shatin, Hong Kong, China
}
\email{ztwu@math.cuhk.edu.hk}

\begin{abstract}
We introduce an Alexander polynomial for MOY graphs. For a framed trivalent MOY graph $\mathbb{G}$, we refine the construction and obtain a framed ambient isotopy invariant $\Delta_{(\mathbb{G},c)}(t)$. The invariant $\Delta_{(\mathbb{G}, c)}(t)$ satisfies a series of relations, which we call MOY-type relations, and conversely these relations determine $\Delta_{(\mathbb{G}, c)}(t)$. Using them we provide a graphical definition of the Alexander polynomial of a link. Finally, we discuss some properties and applications of our invariants.
\end{abstract}
\keywords{Alexander polynomial, MOY graph, state sum, MOY-type relations.}
\subjclass[2010]{Primary 57M27 57M25}

\maketitle

\tableofcontents
\newpage


\section{Introduction}
\subsection{Main results}
We study a class of spatial graphs, which we call MOY graphs. The terminology was first used by Wu \cite{MR3234803} in his categorification of the MOY calculus introduced by Murakami-Ohtsuki-Yamada \cite{MR1659228}. Roughly speaking, an MOY graph is an oriented spatial graph in the $3$-sphere $S^3$ satisfying the following conditions:

\begin{enumerate}
\item \emph{Transverse orientation}: For every vertex $v$, there is a straight line $L_v$ that separates the incoming and outgoing edges.

\item \emph{Balanced coloring}: Each edge is colored by a non-negative integer such that for every vertex $v$, the sum of the colors of the edges entering $v$ equals that on the edges leaving $v$.
\end{enumerate}

While we will give the precise definition in Section 2, we exhibit an example of MOY graph in Fig. ~\ref{moygraph}.  We remark that an oriented spatial graph with a transverse orientation is called \emph{transverse spatial graph} in Harvey-O'Donnol \cite[Definition 2.2]{MR3677933}, where an alternative definition using the formal description of simplicial complex was given.

\begin{figure}[h!]
\begin{tikzpicture}[baseline=-0.65ex, thick, scale=1]
\draw (-1, -1.75) [->-] to (0, -0.75);
\draw (-0.5, -1.75) [->-] to (0, -0.75);
\draw (0, -0.75) [->] to (1, 0.25);
\draw (1, -1.75) [->-] to (0, -0.75);
\draw (0, -0.75) [->] to (-1, 0.25);
\draw (0, -0.75) [->] to (0.5, 0.25);
\draw (0, -0.75) node[circle,fill,inner sep=1.5pt]{};
\draw [dashed] (-0.7, -0.75)--(0.7, -0.75);
\draw (1.25, -0.75) node{$L_v$};
\draw (0.1, -1.5) node{$......$};
\draw (-0.1, 0) node{$......$};
\end{tikzpicture} \hspace{2cm}
\begin{tikzpicture}[baseline=-0.65ex, thick, scale=1]
\draw (0,-1)  to [out=90,in=270] (0.5,-0.33);
\draw (0,-1) to [out=270,in=180] (1.5,-2);
\draw (1.5,-2) to [out=0,in=0] (1.5,1);
\draw (0.5, -0.33) [->-] to (0.5,0.33);
\draw (0.5, 0.33) [->] to [out=90,in=0] (-0.5,1);
\draw (1,-1)  to [out=90,in=270] (0.5,-0.33);
\draw (1,-1) to [out=270,in=45] (0.6,-1.6);
\draw (0.3,-1.8) to [out=225,in=0] (-0.5,-2);
\draw (-0.5,-2) to [out=180,in=180] (-0.5,1);
\draw (1.5,1) [<-] to [out=180,in=90] (0.5,0.33);
\draw (0,1.25) node {$j$};
\draw (1,1.25) node {$i$};
\draw (1.2,0) node {$i+j$};
\draw (0.5, -0.33) node[circle,fill,inner sep=1pt]{};
\draw (0.5, 0.33) node[circle,fill,inner sep=1pt]{};
\end{tikzpicture}
	\caption{The local picture of a vertex for an MOY graph (left) and an example of MOY graph (right).}
\label{moygraph}
\end{figure}

We then introduce the Kauffman state sum $\langle D,c \rangle$ for a given graph diagram $D$ of $G$ with a coloring $c$, which generalizes the Kauffman state sum introduced in \cite{MR712133} for a knot. Based on the state sum we define an \emph{Alexander polynomial}
\begin{equation*}
\Delta_{(G, c)}(t):=\frac{ \langle D,c\rangle}{(t^{-1/2}-t^{1/2})^{\vert V \vert-1}},
\end{equation*}
where $\vert V \vert$ is the number of vertices of $G$.  The Alexander polynomial is a rational function of $t^{1/2}$ and $t^{-1/2}$, and is, like its classical counterpart for a link, well-defined up to $t^k$.

When we fix a framing on $G$ and obtain a framed graph $\mathbb{G}$, it is possible to construct a \emph{normalized Alexander polynomial} of $\mathbb{G}$ that eliminates the aforementioned ambiguity. For the reader who is familiar with the definition of Jones polynomial of knots, this is the same idea as multiplying a writhe factor to Kauffman's bracket polynomial. We will focus on the study of the normalized Alexander polynomial of trivalent framed MOY graph, as it is directly related to the topic of MOY calculus that interests us the most.  Here is our first main result.

\begin{theo}
\label{main1}
For a framed trivalent MOY graph $\mathbb{G}$, we can fix a normalization of the Alexander polynomial $\Delta_{(\mathbb{G}, c)}(t)$ so that it is a well-defined rational function invariant under framed ambient isotopy.
\end{theo}

While our invariant is similar to some existing invariants, which we will discuss presently, we have the following three contributions: (1) The state sum formula is new; (2) We prove the topological invariance combinatorially; namely, we prove its invariance under Reidemeister moves by applying the state sum formula;  (3) The normalization of the Alexander polynomial in the framed case is new.

\medskip
Our second main result is in Section 4. It is inspired by Murakami-Ohtsuki-Yamada's relations in \cite{MR1659228}, where they provided a graphical definition for the $U_q(\mathfrak{sl}_n)$-polynomial invariants of a link for all $n\geq 2$, which is called MOY calculus. We summarize Section 4 as follows.

\begin{theo}
\label{main2}
For a framed trivalent MOY graph $\mathbb{G}$, where each edge is colored by a positive integer,
the invariant $\Delta_{(\mathbb{G}, c)}(t)$ satisfies a series of relations, which we call MOY-type relations; conversely, these relations determine $\Delta_{(\mathbb{G}, c)}(t)$. Using them we provide a graphical definition for the Alexander polynomial of a link.
\end{theo}

We remark that when $n=0$, the original MOY relations in \cite{MR1659228} gives a trivial invariant that is identically 0 for all graphs, instead of the desired Alexander invariant.  There are various solutions to this issue, for example, by cutting open the graph diagrams.  Nevertheless, the relations  in our theorem seem to be the ones that resemble most closely the original constructions.

\vspace{3mm}

\subsection{Relation with existing invariants}
The Alexander polynomial of a knot was first defined and studied by Alexander in 1920s. Now we know there are many different interpretations coming from different backgrounds for the same invariant. Unlike the case of a knot,  there exist diverse versions of Alexander polynomials for a spatial graph, which are usually distinct as invariants.

For clarity, we summarize the relation of $\langle D,c\rangle$, $\Delta_{(G, c)}(t)$ and $\Delta_{(\mathbb{G}, c)}(t)$ with some existing invariants of spatial graphs.\\
\begin{itemize}
\item The state sum $\langle D,c\rangle$ simplifies the state sum proposed by the first named author in \cite{bao}. We provide an explanation in Section 2. \\
\item The state sum $\langle D,c\rangle$ is also an elaboration of the Alexander polynomial defined in \cite{MR3677933}, as both the state sum in \cite{bao} and the present paper and the Alexander polynomial in \cite{MR3677933} are special cases of the torsion invariant defined in Friedl-Juh\'asz-Rasmussen \cite{MR2805998}.\\
\item As an application of Theorem \ref{main2}, the first named author showed in a subsequent paper \cite[Theorem 3.2]{MR4001658} that the Alexander polynomial $\Delta_{(\mathbb{G},c)}(t)$ is equivalent to Viro's $\mathfrak{gl}(1\vert 1)$-Alexander polynomial of a framed trivalent graph defined in \cite{MR2255851}, in the case that the graph has no sink or source vertices and each edge is colored by a positive integer. The key observation is that Viro's $\mathfrak{gl}(1\vert 1)$-Alexander polynomial also satisfies an adapted version of the MOY-type relations, which totally characterize the invariant. \\
\item For the $\theta_{n}$-graph, Litherland defined an invariant similar to $\langle D,c\rangle$ in \cite{MR994083}.\\
\end{itemize}

\begin{rem}
\rm
For other versions of Alexander polynomial of spatial graphs, e.g, Kinoshita \cite{MR0102819}, as long as the definitions depend solely on the complement of the graph in $S^3$, they are not equivalent to the one considered in this paper.
\end{rem}

\vspace{3mm}
\subsection{Organization of the paper}

In Section 2, we introduce the preliminaries of graphs, diagrams and Kauffman states, based on which we define a state sum $\langle D, c \rangle$.  We then compare it with a different state sum proposed in \cite{bao}.   The proof of their equivalence is rather lengthy, technical and isolated from the remainder of the paper , so we postpone it to the Appendix.

In Section 3, we prove the invariance of $\langle D, c \rangle$ under various Reidemeister moves for graph diagrams. Then we give the definition of $\Delta_{(G,c)}(t)$ and exhibit its topological invariance.  Furthermore,  we prove Theorem \ref{main1}.

In Section 4, we elaborate on Theorem \ref{main2}.  More precisely, we wrote down a set of 10 MOY-type relations in Theorem \ref{moytheo} and prove that they uniquely characterize $\Delta_{(\mathbb{G}, c)}(t)$ in Theorem \ref{theorem:axiom}.

In Section 5, we discuss some properties and topological applications of our Alexander polynomial.  In particular, we give in Theorem \ref{condition} a necessary condition for the planarity of a graph in terms of the coefficient of its Alexander polynomial.

\vspace{3mm}
\noindent{\bf Acknowledgements.}
We would like to thank Hitoshi Murakami for helpful discussions and suggestions. The first author was partially supported by Grant-in-Aid for Research Activity Start-up (No. 26887010).
The second author is partially supported by grant from the Research Grants Council of Hong Kong Special
Administrative Region, China (Project No. 14309016 and 14301317).


\section{The Kauffman state sum $\langle D,c \rangle$}

\subsection{MOY graphs}

We recall the definition of an MOY graph. Our exposition and terminology is slightly different from the one used in \cite{MR3234803}.

\begin{defn}
\rm
\begin{enumerate}

\item An \emph{abstract MOY graph} is an oriented graph that equipped with \emph{a balanced coloring} $c: E\to \mathbb{Z}_{\geq 0}$ such that for each vertex $v$,
$$\sum_{\text{$e$: pointing into $v$}} c(e)=\sum_{\text{$e$: pointing out of $v$}} c(e).$$
We allow closed loops without vertices as our abstract graphs.

\item An {\it MOY graph diagram} in $\mathbb{R}^2$ is an immersion of an abstract MOY graph into $\mathbb{R}^2$, with crossing information and a \emph{transverse orientation}: through each vertex $v$, there is a straight line $L_v$ that separates the edges entering $v$ and the edges leaving $v$.

\begin{figure}[h!]
\begin{tikzpicture}[baseline=-0.65ex, thick, scale=1]
\draw (-1, -1.75) [->-] to (0, -0.75);
\draw (-0.5, -1.75) [->-] to (0, -0.75);
\draw (0, -0.75) [->] to (1, 0.25);
\draw (1, -1.75) [->-] to (0, -0.75);
\draw (0, -0.75) [->] to (-1, 0.25);
\draw (0, -0.75) [->] to (0.5, 0.25);
\draw (0, -0.75) node[circle,fill,inner sep=1.5pt]{};
\draw [dashed] (-0.7, -0.75)--(0.7, -0.75);
\draw (1.25, -0.75) node{$L_v$};
\draw (0.1, -1.5) node{$......$};
\draw (-0.1, 0) node{$......$};
\end{tikzpicture}
\label{fig:e3}
\end{figure}

\item Two MOY graph diagrams are said to be {\it equivalent} if they can be connected by a finite sequence of Reidemeister moves in Fig. \ref{fig:e25} and isotopies in $\mathbb{R}^2$.

\begin{figure}[h!]
I: \quad \begin{tikzpicture}[baseline, thick, scale=0.4]
\draw (1,-1)   -- (0.2,-0.2);
\draw (-1, -1) -- (1, 1);
\draw (-0.2,0.2) --  (-1,1) ;
\draw (-1, 1) arc (90:270:1);
\end{tikzpicture}\quad  $\longleftrightarrow$ \quad
\begin{tikzpicture}[baseline=-0.65ex, thick, scale=0.4]
\draw (-1,-1)   -- (-0.2,-0.2);
\draw (1, -1) -- (-1, 1) ;
\draw (0.2,0.2)  --  (1,1);
\draw (-1, 1) arc (90:270:1);
\end{tikzpicture} \quad  $\longleftrightarrow$ \quad
\begin{tikzpicture}[baseline=-0.65ex, thick, scale=0.4]
\draw (0, -1) -- (0, 1) ;
\end{tikzpicture} \\  \vspace{3mm}
II: \quad \begin{tikzpicture}[baseline, thick, scale=0.4]
\draw (-1, 2) arc (90:270:2);
\draw (-3.5, 2) arc (90:60:2);
\draw (-1.5, 0) arc (0:40:2);
\draw (-1.5, 0) arc (0:-40:2);
\draw (-3.5, -2) arc (270:300:2);
\end{tikzpicture} \quad  $\longleftrightarrow$ \quad
\begin{tikzpicture}[baseline, thick, scale=0.4]
\draw (-1, 2) arc (110:250:2);
\draw (-5, 2) arc (70:-70:2);
\end{tikzpicture} \\  \vspace{3mm}
III: \quad\begin{tikzpicture}[baseline, thick, scale=0.4]
\draw (-2, 2) -- (-0.4, 0.4) ;
\draw (-2, 1) -- (-1.5, 1) ;
\draw (1.5, 1) -- (2, 1) ;
\draw (-0.5, 1) -- (0.5, 1) ;
\draw (0.4, -0.4) -- (2, -2) ;
\draw (2, 2) -- (-2, -2) ;
\end{tikzpicture}\quad  $\longleftrightarrow$ \quad
\begin{tikzpicture}[baseline, thick, scale=0.4]
\draw (-2, 2) -- (-0.4, 0.4) ;
\draw (-2, -1) -- (-1.5, -1) ;
\draw (1.5, -1) -- (2, -1) ;
\draw (-0.5, -1) -- (0.5, -1) ;
\draw (0.4, -0.4) -- (2, -2) ;
\draw (2, 2) -- (-2, -2) ;
\end{tikzpicture}\quad\quad\quad
\begin{tikzpicture}[baseline, thick, scale=0.4]
\draw (-2, 2) -- (2, -2) ;
\draw (-2, 1) -- (-1.5, 1) ;
\draw (1.5, 1) -- (2, 1) ;
\draw (-0.5, 1) -- (0.5, 1) ;
\draw (0.4, -0.4) -- (2, -2) ;
\draw (2, 2) -- (0.4, 0.4) ;
\draw (-2, -2) -- (-0.4, -0.4) ;
\end{tikzpicture}\quad  $\longleftrightarrow$ \quad
\begin{tikzpicture}[baseline, thick, scale=0.4]
\draw (-2, 2) -- (2, -2) ;
\draw (-2, -1) -- (-1.5, -1) ;
\draw (1.5, -1) -- (2, -1) ;
\draw (-0.5, -1) -- (0.5, -1) ;
\draw (2, 2) -- (0.4, 0.4) ;
\draw (-2, -2) -- (-0.4, -0.4) ;
\end{tikzpicture}\\\vspace{4mm}
IV: \quad \begin{tikzpicture}[baseline=-0.65ex, thick, scale=1]
\draw (-1, -1) -- (0, 0);
\draw (-0.5, -1) -- (0, 0);
\draw (0, 0) -- (1, 1);
\draw (1, -1) -- (0, 0);
\draw (0, 0) -- (-1,1);
\draw (0, 0) -- (0.5, 1);
\draw (0, 0) node[circle,fill,inner sep=1.5pt]{};
\draw [dashed] (-0.7, 0)--(0.7, 0);
\draw (0.1, -0.5) node{$...$};
\draw (-0.1, 0.5) node{$...$};
\draw (-1.2, 0.7) -- (-0.9, 0.7);
\draw (0.9, 0.7) -- (1.2, 0.7);
\draw (0.65, 0.7) -- (0.45, 0.7);
\draw (-0.6, 0.7) -- (0.2, 0.7);
\end{tikzpicture}  $\longleftrightarrow$
\begin{tikzpicture}[baseline=-0.65ex, thick, scale=1]
\draw (-1, -1) --  (0, 0);
\draw (-0.5, -1) -- (0, 0);
\draw (0, 0) -- (1, 1);
\draw (1, -1)--  (0, 0);
\draw (0, 0) --  (-1,1);
\draw (0, 0) --  (0.5, 1);
\draw (0, 0) node[circle,fill,inner sep=1.5pt]{};
\draw [dashed] (-0.7, 0)--(0.7, 0);
\draw (0.1, -0.5) node{$...$};
\draw (-0.1, 0.5) node{$...$};
\draw (-1.2, -0.7) -- (-0.9, -0.7);
\draw (0.9, -0.7) -- (1.2, -0.7);
\draw (-0.65, -0.7) -- (-0.45, -0.7);
\draw (0.6, -0.7) -- (-0.2, -0.7);
\end{tikzpicture}\quad\quad
\begin{tikzpicture}[baseline=-0.65ex, thick, scale=1]
\draw (-1, -1) --  (0, 0);
\draw (-0.5, -1) -- (0, 0);
\draw (0, 0) -- (0.6, 0.6);
\draw (0.8, 0.8) --  (1, 1);
\draw (1, -1) --  (0, 0);
\draw (0, 0) -- (-0.6,0.6);
\draw (-0.8, 0.8) --  (-1,1);
\draw (0, 0) -- (0.3, 0.6);
\draw (0.4, 0.8) --  (0.5, 1);
\draw (0, 0) node[circle,fill,inner sep=1.5pt]{};
\draw [dashed] (-0.7, 0)--(0.7, 0);
\draw (0.1, -0.5) node{$...$};
\draw (-0.1, 0.5) node{$...$};
\draw (-1.2, 0.7) -- (1.2, 0.7);
\end{tikzpicture}  $\longleftrightarrow$
\begin{tikzpicture}[baseline=-0.65ex, thick, scale=1]
\draw (-1, -1) -- (-0.8, -0.8);
\draw (-0.6, -0.6) --  (0, 0);
\draw (-0.5, -1) -- (-0.4, -0.8);
\draw (-0.3, -0.6) --  (0, 0);
\draw (0, 0) -- (1, 1);
\draw (0.6, -0.6) -- (0, 0);
\draw (1, -1) -- (0.8, -0.8);
\draw (0, 0) -- (-1,1);
\draw (0, 0) -- (0.5, 1);
\draw (0, 0) node[circle,fill,inner sep=1.5pt]{};
\draw [dashed] (-0.7, 0)--(0.7, 0);
\draw (0.1, -0.5) node{$...$};
\draw (-0.1, 0.5) node{$...$};
\draw (-1.2, -0.7) -- (1.2, -0.7);
\end{tikzpicture}\\\vspace{4mm}
V: \quad \begin{tikzpicture}[baseline=-0.65ex, thick, scale=1]
\draw (-1.5, -1) -- (0, 0);
\draw (0, 0) -- (1, 1);
\draw (1.5, -1) -- (0, 0);
\draw (0, 0) -- (-1,1);
\draw (0, 0) -- (0.5, 1);
\draw (0, 0) node[circle,fill,inner sep=1.5pt]{};
\draw [dashed] (-0.7, 0)--(0.7, 0);
\draw (0.6, -0.6) node{$...$};
\draw (-0.6, -0.6) node{$...$};
\draw (-0.1, 0.5) node{$...$};
\draw (0,0) to [out=225,in=90] (-0.25,-0.4) to [out=270,in=315] (-0.1,-0.65) to (0.5, -1);
\draw (-0.5, -1) -- (-0.1, -0.75);
\draw (0,0) to [out=315,in=90] (0.25,-0.4) to [out=270,in=225] (0.1,-0.65);
\end{tikzpicture}
$\longleftrightarrow$
\begin{tikzpicture}[baseline=-0.65ex, thick, scale=1]
\draw (-1.5, -1) -- (0, 0);
\draw (-0.5, -1) -- (0, 0);
\draw (0.5, -1) -- (0, 0);
\draw (0, 0) -- (1, 1);
\draw (1.5, -1) -- (0, 0);
\draw (0, 0) -- (-1,1);
\draw (0, 0) -- (0.5, 1);
\draw (0, 0) node[circle,fill,inner sep=1.5pt]{};
\draw [dashed] (-0.7, 0)--(0.7, 0);
\draw (0.6, -0.6) node{$...$};
\draw (-0.6, -0.6) node{$...$};
\draw (-0.1, 0.5) node{$...$};
\end{tikzpicture}$\longleftrightarrow$
\begin{tikzpicture}[baseline=-0.65ex, thick, scale=1]
\draw (-1.5, -1) -- (0, 0);
\draw (0, 0) -- (1, 1);
\draw (1.5, -1) -- (0, 0);
\draw (0, 0) -- (-1,1);
\draw (0, 0) -- (0.5, 1);
\draw (0, 0) node[circle,fill,inner sep=1.5pt]{};
\draw [dashed] (-0.7, 0)--(0.7, 0);
\draw (0.6, -0.6) node{$...$};
\draw (-0.6, -0.6) node{$...$};
\draw (-0.1, 0.5) node{$...$};
\draw (0,0) to [out=315,in=90] (0.25,-0.4) to [out=270,in=45] (0,-0.75) to (-0.5, -1);
\draw (0.5, -1) -- (0.1, -0.75);
\draw (0,0) to [out=225,in=90] (-0.25,-0.4) to [out=270,in=315] (-0.1,-0.65);
\end{tikzpicture}
\caption{Reidemeister moves for MOY graph diagrams. Suppressed orientations of the edges can be added in all compatible ways.}
\label{fig:e25}
\end{figure}

\item An {\it MOY graph} (or a \emph{knotted MOY graph} in Wu's terminology in \cite{MR3234803}) is an equivalence class of MOY graph diagrams.
\end{enumerate}

\begin{rem}\rm
An MOY graph can be regarded as a transverse spatial graph in $S^3$ with a balanced coloring. For the definition of a transverse spatial graph, see \cite{MR3677933}.
\end{rem}

\end{defn}
A vertex of valence 1 in a graph is called an \emph{end point}; a vertex of valence greater than 1 is called an \emph{internal vertex}.  A graph is \emph{closed} if it has no end points.  A balanced coloring is called \emph{non-trivial} if it is not identically 0, i.e., $c(e)>0$ for some $e\in E$; a balanced coloring is called \emph{positive} if each edge is colored by a positive integer, i.e., $c(e)>0$ for all $e\in E$. All graphs considered in the present paper are assumed to be closed and equipped with a non-trivial coloring. a graph is \emph{trivalent} if all of its internal vertices have valence 3.  Here are two important classes of MOY graphs.

\begin{ex}
\rm
\label{sing}
\begin{enumerate}
\item Each singular knot or link gives rise to an MOY graph with a vertex corresponding to each singular crossing and a balanced coloring $c(e)=1$ for every edge $e$ (Fig. \ref{singtriva} left).
\item An oriented trivalent graph without source or sink has an induced transverse orientation (Fig. \ref{singtriva} right). With a balanced coloring, this gives an MOY graph.
\end{enumerate}
\end{ex}

\begin{figure}[h!]
\begin{tikzpicture}[baseline=-0.65ex, thick, scale=0.6]
\draw (-1, -0.25) [->] to (1, 1.75);
\draw (1, -0.25) [->] to (-1, 1.75);
\draw (0, 0.75) node[circle,fill,inner sep=1.5pt]{};
\draw [dashed] (-1, 0.75)--(1, 0.75);
\end{tikzpicture} \hspace{3cm}
\begin{tikzpicture}[baseline=-0.65ex, thick, scale=1.2]
\draw  (0, 0) [->-] to (0,0.5);
\draw   (0,0.5) [->]  to  (0.66,1);
\draw   (0,0.5) [->]  to  (-0.66,1);
\draw (0, 0.5) node[circle,fill,inner sep=1.5pt]{};
\draw [dashed] (-0.5, 0.5)--(0.5, 0.5);
\end{tikzpicture} \hspace{1.5cm}
\begin{tikzpicture}[baseline=-0.65ex, thick, scale=1.2]
\draw  (0, 0.5) [->] to (0,1);
\draw   (0,0.5) [-<-]  to  (0.66,0);
\draw   (0,0.5) [-<-]  to  (-0.66,0);
\draw (0, 0.5) node[circle,fill,inner sep=1.5pt]{};
\draw [dashed] (-0.5, 0.5)--(0.5, 0.5);
\end{tikzpicture}
\caption{A singular crossing (left) and trivalent vertices (right).}
\label{singtriva}
\end{figure}

\subsection{Kauffman states}

Let $D$ be a graph diagram of a given MOY graph $G$. To define the state sum $\langle D, c\rangle$, we construct the set of states on $D$, which we call Kauffman states.

\begin{defn}
\rm
Starting from an MOY graph diagram $D$, we can obtain a {\it decorated diagram} $(D,\delta)$ by putting a base point $\delta$ on one of the edges in $D$ and drawing a circle around each vertex of $D$. See Example \ref{decorated}.\\

\begin{enumerate}
\item  $\operatorname{Cr}(D)$: denotes the set of crossings, including the types \diaCrossP and \diaCrossN which are the double points of the diagram and the type \diaCircle which are the intersection points around each vertex between the incoming edges with the circle.\\
\item $\operatorname{Re}(D)$: denotes the set of regions, including the {\it regular regions} of $\mathbb{R}^{2}$ separated by $D$ and the {\it circle regions} around the vertices. Note that there is exactly one circle region around each vertex. {\it Marked regions} are the regions adjacent to the base point $\delta$, and the others are called {\it unmarked regions}.\\

\item Corners: For a crossing of type \diaCrossP or \diaCrossN, there are four corners around it, and we call them the {\it north}, {\it south}, {\it west}, and {\it east corners} of the crossing. Around a crossing of type \diaCircle there are three corners, and we call the one inside the circle region the {\it north} corner, the one on the left of the crossing the {\it west} corner and the one on the right the {\it east} corner.  Note also that every corner belongs to a unique region in $\operatorname{Re}(D)$.  \\
\begin{figure}[h!]
\begin{tikzpicture}[baseline=-0.65ex, thick, scale=0.9]
\draw (-1,-1) [->] to (1,1);
\draw (1.3, 1) node {$i$};
\draw (1,-1) -- (0.2,-0.2);
\draw (-0.2,0.2) [->] to (-1,1);
\draw (0, 0.5) node {N};
\draw (0, -0.5) node {S};
\draw (0.5, 0) node {E};
\draw (-0.5, 0) node {W};
\end{tikzpicture}\hspace{2cm}
\begin{tikzpicture}[baseline=-0.65ex, thick, scale=0.9]
\draw (0, 0.5) ellipse (1.5cm and 0.8cm);
\draw (0,-1) [->-] to (0,-0.3);
\draw (-0.4, -0.6) node {W};
\draw (0.4, -0.6) node {E};
\draw (0, 0) node {N};
\end{tikzpicture}
\end{figure}

\end{enumerate}
\end{defn}

\begin{ex}
\label{decorated}
\rm
In Fig. \ref{fig:e5}, we exhibit an MOY graph and a decorated diagram of it. We have
$|\operatorname{Cr}(D)|=4$: one of which is of the type \diaCrossN, and three of which are of the types \diaCircle;
$|\operatorname{Re}(D)|=6$: four of which are regular regions, and two of which are circle regions. The base point $\delta$ is adjacent to two regions that we mark out by $\star$.

\begin{figure}[h!]
\begin{tikzpicture}[baseline=-0.65ex, thick, scale=1.2]
\draw (0,-1)  to [out=90,in=270] (0.5,-0.33);
\draw (0,-1) to [out=270,in=180] (1.5,-2);
\draw (1.5,-2) to [out=0,in=0] (1.5,1);
\draw (0.5, -0.33) [->-] to [out=90,in=270] (0.5,0.33);
\draw (0.5, 0.33) [->] to [out=90,in=0] (-0.5,1);
\draw (1,-1)  to [out=90,in=270] (0.5,-0.33);
\draw (1,-1) to [out=270,in=45] (0.6,-1.6);
\draw (0.3,-1.8) to [out=225,in=0] (-0.5,-2);
\draw (-0.5,-2) to [out=180,in=180] (-0.5,1);
\draw (1.5,1) [<-] to [out=180,in=90] (0.5,0.33);
\draw (0.5, -0.33) node[circle,fill,inner sep=1pt]{};
\draw (0.5, 0.33) node[circle,fill,inner sep=1pt]{};
\end{tikzpicture}\hspace{2cm}
\begin{tikzpicture}[baseline=-0.65ex, thick, scale=1.2]
\draw (0,-1)  to [out=90,in=270] (0.5,-0.33);
\draw (0,-1) to [out=270,in=180] (1.5,-2);
\draw (1.5,-2) to [out=0,in=0] (1.5,1);
\draw (0.5, -0.33) [->-] to [out=90,in=270] (0.5,0.33);
\draw (0.5, 0.33) [->] to [out=90,in=0] (-0.5,1);
\draw (1,-1)  to [out=90,in=270] (0.5,-0.33);
\draw (1,-1) to [out=270,in=45] (0.6,-1.6);
\draw (0.3,-1.8) to [out=225,in=0] (-0.5,-2);
\draw (-0.5,-2) to [out=180,in=180] (-0.5,1);
\draw (1.5,1) [<-] to [out=180,in=90] (0.5,0.33);
\draw (0.5, -0.33) node[circle,fill,inner sep=1pt]{};
\draw (0.5, -0.33) circle (0.3);
\draw (0.5, 0.33) node[circle,fill,inner sep=1pt]{};
\draw (0.5, 0.33) circle (0.3);
\draw (-1.33,0) node {$*$};
\draw (-1,0) node {$\delta$};
\draw (-0.5,-0.5) node {$\star$};
\draw (-1.7,-0.5) node {$\star$};
\draw (1,-0.6) node {$\longleftarrow$};
\draw (2.1,-0.6) node {a crossing};
\draw (0.9,0.3) node {$\longleftarrow$};
\draw (2.3,0.3) node {a circle region};
\end{tikzpicture}
	\caption{Left: an MOY graph diagram $D$. Right: a decorated diagram $(D, \delta)$.}
	\label{fig:e5}
\end{figure}
\end{ex}

Calculating the Euler characteristic of $\mathbb{R}^2$ using $D$ gives the following lemma.
\begin{lemma}
\label{reandcr}
$\vert \operatorname{Re}(D) \vert = \vert \operatorname{Cr}(D) \vert+2$ if and only if $D$ is a connected graph diagram.
\end{lemma}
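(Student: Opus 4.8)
The plan is to interpret the diagram $D$ together with the circle regions as a CW decomposition of the $2$-sphere and to read the stated relation off the Euler characteristic $\chi(S^{2})=2$; this is the graph analogue of the classical fact that a connected link diagram with $c$ crossings cuts $S^{2}$ into $c+2$ regions. Writing $\Gamma$ for the $1$-skeleton, the union of the arcs of $D$ with the boundary circles of all the circle regions, I would arrange the decomposition so that its $2$-cells are exactly the elements of $\operatorname{Re}(D)$, namely the regions of $S^{2}$ cut out by $D$ together with the interior disk of each circle region. Then $F=|\operatorname{Re}(D)|$ holds by construction, and the whole content of the lemma is reduced to a statement about $V-E$.

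The key step, which I expect to be the main obstacle, is the local vertex-and-crossing count, since it forces one to be careful about exactly which points are declared crossings and to check that no region is subdivided unintentionally. The cleanest bookkeeping is to fatten each vertex of $G$ to a disk whose interior is a single $2$-cell, and to let every incident edge meet the bounding circle; then $\Gamma$ has three kinds of $0$-cells: the double points (each $4$-valent), the attachment points of the entering edges (the circle crossings, and the only circle intersection points recorded in $\operatorname{Cr}(D)$), and the attachment points of the leaving edges (not recorded). Writing $d$ for the number of double points and $\varepsilon$ for the number of edges of $G$, so that there are $\varepsilon$ entering and $\varepsilon$ leaving attachments and hence $|\operatorname{Cr}(D)|=d+\varepsilon$, a degree-sum count gives $V=d+2\varepsilon$ and $E=2d+3\varepsilon$, whence
$$E-V=d+\varepsilon=|\operatorname{Cr}(D)|.$$
The arithmetic itself is routine; the care lies entirely in verifying that each circle interior is a single $2$-cell and that the attachment points of the leaving edges are not miscounted as crossings.

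With this in hand I would invoke Euler's formula in the form $V-E+F=1+C$, where $C$ is the number of connected components of $\Gamma$. Substituting $F=|\operatorname{Re}(D)|$ and $E-V=|\operatorname{Cr}(D)|$ yields
$$|\operatorname{Re}(D)|=|\operatorname{Cr}(D)|+1+C.$$
Finally I would observe that attaching a circle around each vertex does not change the number of connected components, so $C$ equals the number of components of the diagram $D$; therefore $|\operatorname{Re}(D)|=|\operatorname{Cr}(D)|+2$ holds precisely when $C=1$, that is, precisely when $D$ is connected, which is the asserted equivalence. The one point meriting an explicit word is the passage from the connected case $V-E+F=2$ to the general $V-E+F=1+C$, which I would justify by adding the components of $\Gamma$ one at a time into a face of the partial decomposition.
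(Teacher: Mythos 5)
Your proof is correct and follows essentially the same route as the paper's: an Euler-characteristic count on $S^{2}$. The only differences are cosmetic --- you build the circles and attachment points into the CW decomposition so that $F=|\operatorname{Re}(D)|$ and $E-V=|\operatorname{Cr}(D)|$ hold directly, and your uniform formula $|\operatorname{Re}(D)|=|\operatorname{Cr}(D)|+1+C$ handles the disconnected direction a little more cleanly than the paper's component-by-component remark.
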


A generic base point $\delta$ is adjacent to at most two distinct regions, which will be denoted by $R_u$ and $R_{v}$. It is possible that $R_u$ and $R_{v}$ are the same regions.

\begin{defn}
\label{states}
\rm
A {\it Kauffman state}, or simply, a {\it state} for a decorated diagram $(D, \delta)$ is a bijective map $$s: \, \operatorname{Cr}(D)\rightarrow \operatorname{Re}(D)\backslash \{R_u, R_{v}\},$$ which sends a crossing in $\operatorname{Cr}(D)$ to one of its corners. Let $S(D, \delta)$ denote the set of all states.
\end{defn}

\begin{rem}
\rm
It is possible that $S(D, \delta)$ is an empty set. This, for example, occurs when $D$ is not connected or the base point $\delta$ is only adjacent to a single region.
\end{rem}

\begin{ex}
\rm
The graph in Fig. \ref{fig:e5} has three Kauffman states, one of which is illustrated in Fig. \ref{fig:e5state1} by putting $\bullet$'s at the chosen corners.

\begin{figure}[h!]
\begin{tikzpicture}[baseline=-0.65ex, thick, scale=1.3]
\draw (0,-1)  to [out=90,in=270] (0.5,-0.33);
\draw (0,-1) to [out=270,in=180] (1.5,-2);
\draw (1.5,-2) to [out=0,in=0] (1.5,1);
\draw (0.5, -0.33) [->-] to [out=90,in=270] (0.5,0.33);
\draw (0.5, 0.33) [->] to [out=90,in=0] (-0.5,1);
\draw (1,-1)  to [out=90,in=270] (0.5,-0.33);
\draw (1,-1) to [out=270,in=45] (0.6,-1.6);
\draw (0.3,-1.8) to [out=225,in=0] (-0.5,-2);
\draw (-0.5,-2) to [out=180,in=180] (-0.5,1);
\draw (1.5,1) [<-] to [out=180,in=90] (0.5,0.33);
\draw (0.5, -0.33) node[circle,fill,inner sep=1pt]{};
\draw (0.5, -0.33) circle (0.3);
\draw (0.5, 0.33) node[circle,fill,inner sep=1pt]{};
\draw (0.5, 0.33) circle (0.3);
\draw (-1.35,0) node {$*$};
\draw (-1,0) node {$\delta$};
\draw (-0.5,-0.5) node {$\star$};
\draw (-1.7,-0.5) node {$\star$};
\draw (0.8,-0.6) node {$\bullet$};
\draw (0.5,0.15) node {$\bullet$};
\draw (0.45,-0.5) node {$\bullet$};
\draw (0.45,-1.5) node {$\bullet$};
\end{tikzpicture}
	\caption{A Kauffman state associated to $(D, \delta)$ is marked out by $\bullet$'s .}
	\label{fig:e5state1}
\end{figure}
\end{ex}

\vspace{3mm}

\subsection{Kauffman state sum}

Suppose $(D, \delta)$ is a connected decorated diagram.  Before introducing the state sum, we define the index of a regular region.

For each regular region of $(D, \delta)$,
the {\it index} of it is defined by the following rules.
\begin{enumerate}

\item The index of the unbounded region is set to be $0$.

\item The indices of the other regular regions are inductively determined by the rule as exhibited below: traversing an edge with color $i$ in its orientation, let the difference of the index of its left-hand side region and that of its right-hand side region be $i$.
\end{enumerate}
\begin{figure}[h!]
\begin{tikzpicture}[baseline=-0.65ex, thick, scale=1.8]
\draw (0,0) [->] to (0,1);
\draw (0.3, 1) node {$i$};
\draw (0.6, 0.5) node {$n-i$};
\draw (-0.5, 0.5) node {$n$};
\end{tikzpicture}
\end{figure}
The definition of a balanced coloring ensures that the above rules give rise to a well-defined index for each regular region.

\begin{defn}
\rm
Suppose $\delta$ is on an edge with non-zero color $i$, and the indices of the regions adjacent to $\delta$ are $n$ and $n-i$. Define
\begin{equation*}\label{delta}
\vert \delta \vert=t^{n-i}-t^{n}.
\end{equation*}

\end{defn}

\vspace{2mm}

\begin{defn}\label{def:statesum}
\rm
Suppose $(D, \delta)$ is a connected decorated diagram with $N$ crossings $C_1, C_2, \cdots, C_N$ in $\operatorname{Cr}(D)$ and $N+2$ regions $R_1, R_2, \cdots, R_{N+2}$ in $\operatorname{Re}(D)$. Let $c$ be a non-trivial coloring on $D$. Suppose $\delta$ is on an edge with non-zero color $i$. The Kauffman state sum $\langle D, c\rangle$ is defined by the following recipe.

\begin{enumerate}
\item Define the local contributions $M_{C_p}^{\triangle}$ and $A_{C_p}^{\triangle}$ associated to each corner $\triangle$ around the crossing $C_p$ as in Fig. ~\ref{fig:e1}. Here $M_{C_p}^{\triangle}$ does not depends on the coloring, while $A_{C_p}^{\triangle}$ does.
\begin{figure}[h!]
\begin{tikzpicture}[baseline=-0.65ex, thick, scale=0.9]
\draw (-1,-1) [->] to (1,1);
\draw (1,-1) -- (0.2,-0.2);
\draw (-0.2,0.2) [->] to (-1,1);
\draw (0, 0.5) node {$-1$};
\draw (0, -0.5) node {$1$};
\draw (0.5, 0) node {$1$};
\draw (-0.5, 0) node {$1$};
\end{tikzpicture}\hspace{1.3cm}
\begin{tikzpicture}[baseline=-0.65ex, thick, scale=0.9]
\draw (1,-1) [->] to (-1,1);
\draw (-1,-1) -- (-0.2,-0.2);
\draw (0.2,0.2) [->] to (1,1);
\draw (0, 0.5) node {$-1$};
\draw (0, -0.5) node {$1$};
\draw (0.5, 0) node {$1$};
\draw (-0.5, 0) node {$1$};
\end{tikzpicture}
\hspace{1.3cm}
\begin{tikzpicture}[baseline=-0.65ex, thick, scale=0.9]
\draw (0, 0.5) ellipse (1.5cm and 0.8cm);
\draw (0,-1) [->-] to (0,-0.3);
\draw (-0.4, -0.6) node {$1$};
\draw (0.4, -0.6) node {$1$};
\draw (0, 0) node {$1$};
\end{tikzpicture}\\
\vspace{5mm}

\begin{tikzpicture}[baseline=-0.65ex, thick, scale=0.9]
\draw (-1,-1) [->] to (1,1);
\draw (1.3, 1) node {$i$};
\draw (1,-1) -- (0.2,-0.2);
\draw (-0.2,0.2) [->] to (-1,1);
\draw (0, 0.6) node {$t^{-i}$};
\draw (0, -0.5) node {$1$};
\draw (0.5, 0) node {$1$};
\draw (-0.6, 0) node {$t^{-i}$};
\end{tikzpicture}\hspace{1cm}
\begin{tikzpicture}[baseline=-0.65ex, thick, scale=0.9]
\draw (1,-1) [->] to (-1,1);
\draw (-1.3, 1) node {$i$};
\draw (-1,-1) -- (-0.2,-0.2);
\draw (0.2,0.2) [->] to (1,1);
\draw (0, 0.5) node {$t^{i}$};
\draw (0, -0.5) node {$1$};
\draw (0.5, 0) node {$t^{i}$};
\draw (-0.5, 0) node {$1$};
\end{tikzpicture}
\hspace{1cm}
\begin{tikzpicture}[baseline=-0.65ex, thick, scale=0.9]
\draw (0, 0.5) ellipse (1.5cm and 0.8cm);
\draw (0,-1) [->-] to (0,-0.3);
\draw (-0.5, -0.6) node {$t^{-i/2}$};
\draw (0.5, -0.6) node {$t^{i/2}$};
\draw (0, 0.2) node {$t^{-i/2}-t^{i/2}$};
\draw (0.2, -1.2) node {$i$};
\end{tikzpicture}
	\caption{The local contributions $M_{C_p}^{\triangle}$ (top) and $A_{C_p}^{\triangle}$ (bottom). Here $i$ indicates the color of the corresponding edge.}
	\label{fig:e1}
\end{figure}

\item For each state $s\in S(D, \delta)$, let $$M(s):=\prod_{p=1}^{N}M_{C_p}^{s(C_{p})},$$
$$A(s):=\prod_{p=1}^{N}A_{C_p}^{s(C_{p})}. $$

\item The {\it Kauffman state sum} is defined as \begin{equation}
\label{alexander}
\langle D, c\rangle:=\vert \delta\vert^{-1}\sum_{s\in S(D, \delta)} M(s)\cdot A(s).
\end{equation}

\end{enumerate}
For all the cases that $S(D, \delta)=\emptyset$, we let $\langle D, c\rangle =0$.
\end{defn}

\medskip

\begin{rem}
\rm
While a singular knot can be regarded as a special MOY graph, the local contribution defined in Ozsv{\'a}th-Stipsicz-Szab{\'o} \cite[Figure 6]{MR2529302} for the Alexander polynomial of a singular knot is different from the aforementioned construction in Fig. ~\ref{fig:e1}.  The requirement of invariance under Reidemeister moves for MOY graphs makes it difficult to adopt a direct generalization from the former.
\end{rem}

\medskip

\begin{ex}
\rm
Continuing with our previous example, we find a total of three states $s_1, s_2, s_3$ and represent them by $\bullet$'s in Fig. \ref{fig:e5state}.  Denote $\{k\}=t^{k/2}-t^{-k/2}$.  Table \ref{table1} summarizes the computation of each state using Fig. \ref{fig:e1}.
\begin{table}[h!]
\label{table1}
\begin{tabular}{|c|c|c|c|} \hline
state & $M(s)$ & $A(s)$ & $M(s)A(s)$\\ \hline \hline
$s_1$ & $-1$ & $\displaystyle \{i+j\}\{i\} t^{{j}/{2}} t^{i}$ & $-\{i+j\}\{i\} t^{i+j/{2}}$ \\ \hline
$s_2$ & $1$ & $\{i+j\}\{i\} t^{{-j}/{2}} t^{i}$ & $\{i+j\}\{i\} t^{i-j/{2}}$\\ \hline
$s_3$ & $1$ & $\{i+j\}\{j\} t^{{i}/{2}} t^{i}$ & $\{i+j\}\{j\} t^{3i/2}$\\ \hline
\end{tabular}
\bigskip
 \caption{All Kauffman states and the corresponding $M(s)$ and $A(s)$.   }
  \label{table1}
\end{table}

Since $\vert \delta \vert=1-t^j$, we have
\begin{eqnarray*}
\langle D, c\rangle &=& (1-t^j)^{-1}[-\{i+j\}\{i\} t^{i+j/{2}}+\{i+j\}\{i\} t^{i-j/{2}}+\{i+j\}\{j\} t^{3i/2}]\\
&= & \frac{\{i+j\}[-\{i\}\{j\}t^{i}+\{j\}t^{3i/2}]}{-\{j\}t^{j/2}}\\
&=& -\{i+j\}t^{(i-j)/2}.
\end{eqnarray*}

\begin{figure}[h!]
\begin{tikzpicture}[baseline=-0.65ex, thick, scale=1.2]
\draw (0,-1)  to [out=90,in=270] (0.5,-0.33);
\draw (0,-1) to [out=270,in=180] (1.5,-2);
\draw (1.5,-2) to [out=0,in=0] (1.5,1);
\draw (0.5, -0.33) [->-] to [out=90,in=270] (0.5,0.33);
\draw (0.5, 0.33) [->] to [out=90,in=0] (-0.5,1);
\draw (1,-1)  to [out=90,in=270] (0.5,-0.33);
\draw (1,-1) to [out=270,in=45] (0.6,-1.6);
\draw (0.3,-1.8) to [out=225,in=0] (-0.5,-2);
\draw (-0.5,-2) to [out=180,in=180] (-0.5,1);
\draw (1.5,1) [<-] to [out=180,in=90] (0.5,0.33);
\draw (0.5, -0.33) node[circle,fill,inner sep=1pt]{};
\draw (0.5, -0.33) circle (0.3);
\draw (0.5, 0.33) node[circle,fill,inner sep=1pt]{};
\draw (0.5, 0.33) circle (0.3);
\draw (-1.33,0) node {$*$};
\draw (-1,0) node {$\delta$};
\draw (-0.5,-0.5) node {$\star$};
\draw (-1.7,-0.5) node {$\star$};
\draw (0.85,-0.6) node {$\bullet$};
\draw (0.5,0.15) node {$\bullet$};
\draw (0.45,-0.5) node {$\bullet$};
\draw (0.5,-1.5) node {$\bullet$};
\draw (0,1.25) node {$j$};
\draw (1,1.25) node {$i$};
\draw (1.5,0) node {$i+j$};
\draw (0.5,-2.5) node {$s_1$};
\end{tikzpicture}\hspace{5mm}
\begin{tikzpicture}[baseline=-0.65ex, thick, scale=1.2]
\draw (0,-1)  to [out=90,in=270] (0.5,-0.33);
\draw (0,-1) to [out=270,in=180] (1.5,-2);
\draw (1.5,-2) to [out=0,in=0] (1.5,1);
\draw (0.5, -0.33) [->-] to [out=90,in=270] (0.5,0.33);
\draw (0.5, 0.33) [->] to [out=90,in=0] (-0.5,1);
\draw (1,-1) to [out=90,in=270] (0.5,-0.33);
\draw (1,-1) to [out=270,in=45] (0.6,-1.6);
\draw (0.3,-1.8) to [out=225,in=0] (-0.5,-2);
\draw (-0.5,-2) to [out=180,in=180] (-0.5,1);
\draw (1.5,1) [<-] to [out=180,in=90] (0.5,0.33);
\draw (0.5, -0.33) node[circle,fill,inner sep=1pt]{};
\draw (0.5, -0.33) circle (0.3);
\draw (0.5, 0.33) node[circle,fill,inner sep=1pt]{};
\draw (0.5, 0.33) circle (0.3);
\draw (-1.33,0) node {$*$};
\draw (-1,0) node {$\delta$};
\draw (-0.5,-0.5) node {$\star$};
\draw (-1.7,-0.5) node {$\star$};
\draw (0.6,-0.8) node {$\bullet$};
\draw (0.5,0.15) node {$\bullet$};
\draw (0.45,-0.5) node {$\bullet$};
\draw (0.8,-1.7) node {$\bullet$};
\draw (0.5,-2.5) node {$s_2$};
\end{tikzpicture}
\hspace{5mm}
\begin{tikzpicture}[baseline=-0.65ex, thick, scale=1.2]
\draw (0,-1)  to [out=90,in=270] (0.5,-0.33);
\draw (0,-1) to [out=270,in=180] (1.5,-2);
\draw (1.5,-2) to [out=0,in=0] (1.5,1);
\draw (0.5, -0.33) [->-] to [out=90,in=270] (0.5,0.33);
\draw (0.5, 0.33) [->] to [out=90,in=0] (-0.5,1);
\draw (1,-1)  to [out=90,in=270] (0.5,-0.33);
\draw (1,-1) to [out=270,in=45] (0.6,-1.6);
\draw (0.3,-1.8) to [out=225,in=0] (-0.5,-2);
\draw (-0.5,-2) to [out=180,in=180] (-0.5,1);
\draw (1.5,1) [<-] to [out=180,in=90] (0.5,0.33);
\draw (0.5, -0.33) node[circle,fill,inner sep=1pt]{};
\draw (0.5, -0.33) circle (0.3);
\draw (0.5, 0.33) node[circle,fill,inner sep=1pt]{};
\draw (0.5, 0.33) circle (0.3);
\draw (-1.33,0) node {$*$};
\draw (-1,0) node {$\delta$};
\draw (-0.5,-0.5) node {$\star$};
\draw (-1.7,-0.5) node {$\star$};
\draw (0.4,-0.8) node {$\bullet$};
\draw (0.5,0.15) node {$\bullet$};
\draw (0.55,-0.5) node {$\bullet$};
\draw (0.8,-1.7) node {$\bullet$};
\draw (0.5,-2.5) node {$s_3$};
\end{tikzpicture}
\caption{The three states for the decorated diagram.}\label{fig:e5state}
\end{figure}

\end{ex}

\vspace{3mm}

\subsection{Relation with the Alexander matrix}

Suppose $(D, \delta)$ is a connected decorated diagram with $N$ crossings $C_1, C_2, \cdots, C_N$ in $\operatorname{Cr}(D)$ and $N+2$ regions $R_1, R_2, \cdots, R_{N+2}$ in $\operatorname{Re}(D)$. Let $c$ be the coloring on $D$.

\begin{defn} \label{def:Alexmatrix}
\rm
The {\it Alexander matrix} is an $N\times (N+2)$ matrix, denoted $A(D,c)$, whose $i$-th row corresponds to the crossing $C_i$ and the $j$-th column corresponds to the region $R_j$. The $(i,j)$-entry of $A(D, c)$ is the sum of $m_{C_i}^{\triangle}A_{C_i}^{\triangle}$ for $\triangle$ belonging to $R_j$, where $m_{C_i}^{\triangle}$ is a local contribution specified in Fig. \ref{fig:e4}.
\end{defn}

\begin{figure}[h!]
\begin{tikzpicture}[baseline=-0.65ex, thick, scale=0.9]
\draw (-1,-1) [->] to (1,1);
\draw (1,-1) -- (0.2,-0.2);
\draw (-0.2,0.2) [->] to (-1,1);
\draw (0, 0.5) node {$1$};
\draw (0, -0.5) node {$1$};
\draw (0.5, 0) node {$-1$};
\draw (-0.6, 0) node {$-1$};
\end{tikzpicture}\hspace{1.5cm}
\begin{tikzpicture}[baseline=-0.65ex, thick, scale=0.9]
\draw (1,-1) [->] to (-1,1);
\draw (-1,-1) -- (-0.2,-0.2);
\draw (0.2,0.2) [->] to (1,1);
\draw (0, 0.5) node {$-1$};
\draw (0, -0.5) node {$-1$};
\draw (0.5, 0) node {$1$};
\draw (-0.5, 0) node {$1$};
\end{tikzpicture}
\hspace{1.5cm}
\begin{tikzpicture}[baseline=-0.65ex, thick, scale=0.9]
\draw (0, 0.5) ellipse (1.5cm and 0.8cm);
\draw (0,-1) [->-] to (0,-0.3);
\draw (-0.5, -0.6) node {$-1$};
\draw (0.4, -0.6) node {$1$};
\draw (0, 0) node {$1$};
\end{tikzpicture}
	\caption{The local contribution $m_{C_p}^{\triangle}$, where $C_p$ is a crossing and $\triangle$ is a corner around $C_p$.}
	\label{fig:e4}
\end{figure}

\begin{ex}
\rm
Continuing with the previous example, we label  the $4$ crossings and $6$ regions as in Fig. \ref{fig:exAlexandermatrix}. We see that $R_5$ and $R_6$ are circle regions, and $C_1, C_2, C_3$ are crossings of type  \diaCircle while $C_4$ is a \diaCrossN crossing.

\begin{figure}[h!]
\begin{tikzpicture}[baseline=-0.65ex, thick, scale=1.5]
\draw (0,-1)  to [out=90,in=270] (0.5,-0.33);
\draw (0,-1) to [out=270,in=180] (1.5,-2);
\draw (1.5,-2) to [out=0,in=0] (1.5,1);
\draw (0.5, -0.33) [->-] to [out=90,in=270] (0.5,0.33);
\draw (0.5, 0.33) [->] to [out=90,in=0] (-0.5,1);
\draw (1,-1)  to [out=90,in=270] (0.5,-0.33);
\draw (1,-1) to [out=270,in=45] (0.6,-1.6);
\draw (0.3,-1.8) to [out=225,in=0] (-0.5,-2);
\draw (-0.5,-2) to [out=180,in=180] (-0.5,1);
\draw (1.5,1) [<-] to [out=180,in=90] (0.5,0.33);
\draw (0.5, -0.33) node[circle,fill,inner sep=1pt]{};
\draw (0.5, -0.36) ellipse (0.3 and 0.2);
\draw (0.5, 0.33) node[circle,fill,inner sep=1pt]{};
\draw (0.5, 0.3) ellipse (0.3 and 0.2);
\draw (-0.25,-0.4) node {$R_6$};
\draw (-1.7,-0.5) node {$R_1$};
\draw (-1.4,0.5) node {$j$};
\draw (2.4,0.5) node {$i$};
\draw (0.8, 0.05) node {$\longleftarrow$};
\draw (1.2,0.05) node {$C_1$};
\draw (0.9, -0.6) node {$\longleftarrow$};
\draw (0.4, -0.7) node {$\uparrow$};
\draw (0.4,-1) node {$C_2$};
\draw (1.3,-0.6) node {$C_3$};
\draw (0.8, -1.7) node {$\longleftarrow$};
\draw (1.2,-1.7) node {$C_4$};
\draw (0.4,-1.4) node {$R_3$};
\draw (1.7,-1.3) node {$R_4$};
\draw (-0.5,-1.5) node {$R_2$};
\draw (-0.25, 0.3) node {$R_5$};
\draw (0.2, 0.3) node {$\longrightarrow$};
\draw (0.2, -0.4) node {$\longrightarrow$};
\end{tikzpicture}
\caption{Calculating the Alexander matrix with the given labeling of the regions and crossings.}
\label{fig:exAlexandermatrix}
\end{figure}

Now we calculate the Alexander matrix $A$. By definition, if $C_i$ and $R_j$ are not adjacent to each other, the $(i,j)$-entry $a_{ij}$ of $A$ is zero. Therefore $a_{11}=a_{13}=a_{16}=a_{21}=a_{24}=a_{25}=a_{31}=a_{32}=a_{35}=a_{45}=a_{46}=0$.

Since the crossing $C_1$ meets $R_2$ only at its west corner, we have $a_{12}=m_{C_1}^{W}A_{C_1}^{W}=-1\cdot t^{-(i+j)/2}$. Similarly we can calculate the other entries. The matrix is
\[A=\begin{pmatrix}
0 & -t^{-(i+j)/2} & 0 & t^{(i+j)/2} & t^{-(i+j)/2}-t^{(i+j)/2} & 0\\
0 & -t^{-i/2} & t^{i/2} & 0 & 0 & t^{-i/2}-t^{i/2} \\
0 & 0 & -t^{-j/2} & t^{j/2} & 0 & t^{-j/2}-t^{j/2}\\
-1& 1 & -t^{i} & t^{i} & 0 & 0 \\
\end{pmatrix}.
\]
\end{ex}

\vspace{3mm}

Suppose the base point $\delta$ is adjacent to two regions $R_u$ and $R_v$. Let $A(D, c)\backslash(u, v)$ denote the square matrix obtained from the Alexander matrix $A(D, c)$ by removing the $u$-th and $v$-th columns, which correspond to $R_u$ and $R_v$, respectively. Calculating the determinant of $A(D, c)\backslash(u, v)$ we see that
\begin{equation}
\label{old}
\det A(D, c)\backslash(u, v) = \sum_{s\in S(D, \delta)}\mathrm{sign}(s)\cdot m(s)\cdot A(s),
\end{equation}
where $S(D, \delta)$ is the same set of Kauffman states as before, and $\mathrm{sign}(s)$ is the sign of the state $s$ as a permutation with respect to the given orders of the elements in $\operatorname{Cr}(D)$ and $\operatorname{Re}(D) \backslash \{R_u, R_{v}\}$.  The relation of the old state sum (\ref{old}) and $\langle D, c\rangle$ is as follows.

\begin{prop}[Appendix]
\label{equivalence}
For any $s\in S(D, \delta)$, the value $$\frac{M(s)}{\mathrm{sign}(s)\cdot m(s)}\in \{1, -1\}$$ does not depend on the choice of the state $s$.  As a result,
$$\langle D, c\rangle= \pm \vert \delta \vert^{-1} \cdot \det A(D, c)\backslash(u, v),$$
where the sign $\pm$ only depends on the ordering in $\operatorname{Cr}(D)$ and $\operatorname{Re}(D) \backslash \{R_u, R_{v}\}$.
\end{prop}

\begin{rem}
\rm
Besides the above combinatorial definition of $A(D, c)$, the Alexander matrix also has the following topological description. Let $G$ be the spatial graph in $S^3$ determined by the graph diagram $D$. Denote $X$ the complement of $G$ in $S^3$. Using the coloring $c$ we can construct a map $$\psi_c: \pi_1(X) \to \mathbb{Z},$$ where the meridian of each edge is sent to the color of the edge. One can construct an abelian covering space $p: \widetilde X \to X$ from $\psi_c$. Then, $A(D, c)$ is a presentation matrix for the module $H_1 (\widetilde X, p^{-1} (\partial_{\mathrm{in}} (X)))$, where $\partial_{\mathrm{in}}(X)\subset \partial X$ is a certain submanifold associated with the transverse orientation of $D$ and $\delta$. See \cite{bao} for more details, where the state sum \eqref{old} was first proposed.
\end{rem}

\vspace{3mm}

\subsection{The choice of the base point}
In this section, we prove the independence of $\langle D,c \rangle$ on the choice of the base point $\delta$.

\begin{prop}
\label{initial}
The state sum $\langle D,c \rangle$ does not depend on the choice of the location of $\delta$ among the edges with non-zero colors.
\end{prop}

\begin{proof}
The proof is adapted from \cite[Theorem 3.3]{MR712133}.
Suppose $\delta$ and $\delta'$ are two different choices of base points. Suppose that $R_u, R_v$ are the two regions adjacent to $\delta$ and $R_u$ is on the left-hand side when traversing the edge containing $\delta$ in its orientation, and that $R_w, R_z$ are the two regions adjacent to $\delta'$. Let $\langle D,c \rangle_{\delta}$ be the state sum calculated by using $\delta$. Our goal is to show that $\langle D,c \rangle_{\delta}=\langle D,c \rangle_{\delta'}$. The strategy is as follows. We first prove that $\langle D,c \rangle_{\delta}=\pm\langle D,c \rangle_{\delta'}$ using the Alexander matrix in Definition \ref{def:Alexmatrix}. Then we remove the ambiguity of the sign.

Without loss of generality, we assume that $R_1, R_2, \cdots, R_{M}$ are the regular regions and $R_{M+1}, R_{M+2}, \cdots, R_{N+2}$ are the circle regions in $\operatorname{Re}(D)$. Consider the Alexander matrix $A=A(D,c)$. Let $A_q$ be the $q$-th column corresponding to the region $R_q$. We claim that $\sum_{q=1}^{N+2}A_q=0$ since for each crossing $C_p$ the sum of $m_{C_p}^{\triangle}A_{C_p}^{\triangle}$ over all corners of $C_p$ is zero. From this identity we have \[ \det A \backslash(u, v)=(-1)^{w-v}\det A \backslash(u, w)-(-1)^{w-u}\det A \backslash(v, w) \tag{$\ast$} \label{eq}\] for $1\leq u< v< w \leq M$.

On the other hand, we have $\sum_{q=1}^{M}t^{\operatorname{ind}(R_q)}A_q=0$. Therefore $$\sum_{q=1}^{M}t^{\operatorname{ind}(R_q)-\operatorname{ind}(R_u)}A_q=0$$ for any fixed $1\leq u \leq M$.

As a result, for any $1\leq u, w \leq M$, we have
$$-t^{\operatorname{ind}(R_w)-\operatorname{ind}(R_u)}A_w=\sum_{\substack{1\leq q\leq M \\ q \neq w}}t^{\operatorname{ind}(R_q)-\operatorname{ind}(R_u)}A_q.$$ Bringing the scalar $-t^{\operatorname{ind}(R_w)-\operatorname{ind}(R_u)}$ to the column $A_w$ in $A\backslash(u, v)$, we get
\begin{align*}
&-t^{\operatorname{ind}(R_w)-\operatorname{ind}(R_u)}\det A\backslash(u, v)\\
&=(-1)^{w-u}\det A\backslash(v, w)-t^{\operatorname{ind}(R_v)-\operatorname{ind}(R_u)}(-1)^{w-v}\det A\backslash(u, w)\\
\end{align*}
for $1\leq u< v< w \leq M$. This together with (\ref{eq}) implies that
$$(t^{\operatorname{ind}(R_w)}-t^{\operatorname{ind}(R_v)})\det A\backslash(u, v)=(-1)^{w-u}(t^{\operatorname{ind}(R_v)}-t^{\operatorname{ind}(R_u)})\det A\backslash(v, w).$$ Applying the above relation twice, we end up with
$$(t^{\operatorname{ind}(R_z)}-t^{\operatorname{ind}(R_w)})\det A\backslash(u, v)=(-1)^{(w+z)-(u+v)}(t^{\operatorname{ind}(R_v)}-t^{\operatorname{ind}(R_u)})\det A\backslash(w, z),$$ for any $1\leq u< v\leq w < z \leq M$.

By definition we have $\vert \delta \vert =t^{\operatorname{ind}(R_v)}-t^{\operatorname{ind}(R_u)}$ and $\vert \delta' \vert =t^{\operatorname{ind}(R_z)}-t^{\operatorname{ind}(R_w)}$. Choose an ordering of crossings in $\operatorname{Cr}(D)$ so that $\det A\backslash(u, v)=\vert \delta\vert \langle D,c\rangle_{\delta}$. Then we have $\det A\backslash(w, z)=(-1)^{\epsilon}\vert \delta'\vert \langle D,c\rangle_{\delta'}$ for $\epsilon=0 \text{ or } 1$. The sign $\epsilon$ depends only on the ordering of the regions and crossings.  In summary, we have proved that
\[\langle D, c\rangle_{\delta}=(-1)^{(w+z)-(u+v)+\epsilon} \langle D, c\rangle_{\delta'}=\pm \langle D, c\rangle_{\delta'} \tag{$\ast\ast$} \label{eqq}.\]

Next, we remove ``$\pm$" from \eqref{eqq}. Let us first consider the case that $D$ is a plane diagram. As $\langle D,c\rangle_{\delta}$ is zero if and only if $\langle D, c\rangle_{\delta'}$ is zero, we assume that $\langle D,c\rangle_{\delta}$ is non-zero.  Note that there are only crossing of type \diaCircle for a plane diagram.
From Formula (\ref{alexander}) of $\langle D,c\rangle$, we see that the coefficient of the lowest degree term of $\vert \delta\vert\langle D,c\rangle_{\delta}$ is positive. Since $\operatorname{ind}(R_u)>\operatorname{ind}(R_v)$ and $\operatorname{ind}(R_w)>\operatorname{ind}(R_z)$, the coefficients of the lowest degree terms of both $\vert \delta' \vert \vert \delta \vert \langle D, c\rangle_{\delta}$ and $\vert \delta \vert \vert \delta' \vert  \langle D,c\rangle_{\delta'}$ are also positive. This together with (\ref{eqq}) implies that $$\langle D, c\rangle_{\delta}=\langle D, c\rangle_{\delta'}.$$ Hence, the location of $\delta$ does not affect the value of $\langle D,c\rangle$ when $D$ is a plane MOY diagram.

For an arbitrary MOY graph diagram that possibly has crossings of type \diaCrossP or \diaCrossN, we connect it with plane graphs by the following skein-type relations.
\begin{align*}
&\left<\begin{tikzpicture}[baseline=-0.65ex, thick, scale=0.5]
\draw (1,-1)   -- (0.2,-0.2);
\draw (-1, -1) [->] -- (1, 1) node[above]{$i$};
\draw (-0.2,0.2) [->] to (-1,1)  node[above]{$j$};
\end{tikzpicture} \right>
  =\frac{-t^{-\frac{i+j}{2}}}{\{i\}\{j\}}\cdot
\left<
\begin{tikzpicture}[baseline=-0.65ex, thick, scale=0.5]
\draw (0,-2) node[below]{$i$} [->-] to (0, 0);
\draw (0,0) --  (0, 1);
\draw (0, 1) [->] to (0,2) node[above]{$j$};
\draw (0,1) node[circle,fill,inner sep=1pt]{};
\draw (2,0) node[circle,fill,inner sep=1pt]{};
\draw (2,-2) [->-] node[below]{$j$} to (2,0);
\draw (2,0) [->] to (2,2) node[above]{$i$};
\draw (2,-0) [->-] to (0, 1);
\draw (1, -0.3) node {$j-i$};
\end{tikzpicture}\right>
- \, \frac{t^{-\frac{j}{2}}}{\{i\}}\cdot
\left<\begin{tikzpicture}[baseline=-0.65ex, thick, scale=0.5]
\draw (-1,-1)  node[below]{$i$} -- (0,0);
\draw (1,-1) node[below]{$j$} -- (0,0);
\draw (0,0) [->] -- (-1, 1) node[above]{$j$};
\draw (0,0) [->] to (1,1)  node[above]{$i$};
\draw (0,0) node[circle,fill,inner sep=1pt]{};
\end{tikzpicture}\right>,\\
&\left<
\begin{tikzpicture}[baseline=-0.65ex, thick, scale=0.5]
\draw (1,-1) [->]  -- (-1,1)  node[above]{$j$};
\draw (-1, -1) to (-0.2, -0.2);
\draw (0.2, 0.2) [->] -- (1, 1) node[above]{$i$};
\end{tikzpicture}\right>
  =\frac{-t^{\frac{i+j}{2}}}{\{i\}\{j\}}\cdot
\left<
\begin{tikzpicture}[baseline=-0.65ex, thick, scale=0.5]
\draw (0,-2) node[below]{$i$} [->-] to (0, 0);
\draw (0,0) --  (0, 1);
\draw (0, 1) [->] to (0,2) node[above]{$j$};
\draw (0,1) node[circle,fill,inner sep=1pt]{};
\draw (2,0) node[circle,fill,inner sep=1pt]{};
\draw (2,-2) [->-] node[below]{$j$} to (2,0);
\draw (2,0) [->] to (2,2) node[above]{$i$};
\draw (2,-0) [->-] to (0, 1);
\draw (1, -0.3) node {$j-i$};
\end{tikzpicture}\right>
- \, \frac{t^{\frac{j}{2}}}{\{i\}}\cdot
\left<\begin{tikzpicture}[baseline=-0.65ex, thick, scale=0.5]
\draw (-1,-1)  node[below]{$i$} -- (0,0);
\draw (1,-1) node[below]{$j$} -- (0,0);
\draw (0,0) [->] -- (-1, 1) node[above]{$j$};
\draw (0,0) [->] to (1,1)  node[above]{$i$};
\draw (0,0) node[circle,fill,inner sep=1pt]{};
\end{tikzpicture}\right>,\\
\end{align*}
where $j\geq i\geq 0$ and $\{i\}=t^{i/2}-t^{-i/2}$. For $j\leq i$ we can also establish similar relations. Such kinds of relations will be discussed in great details in Section \ref{moysection}, so we omit their proofs here.  Note that all the coefficients appearing on the right-hand side of the equations do not depend on $\delta$, and $\vert \delta \vert$ is identical for all the graphs showing here.
As we just proved that $\langle D,c\rangle$ is independent of the choice of $\delta$ in the case of plane graphs, we conclude that the same is true for general MOY graph diagrams from the relations above.

\end{proof}


\section{The Alexander polynomial} 
In this section, we study the invariance of the Kauffman state sum under the Reidemeister moves. This enables us to define an Alexander polynomial $\Delta_{(G, c)}(t)$ for all MOY graphs.  Then we provide a normalization $\Delta_{(\mathbb{G}, c)}(t)$ for a framed trivalent graph $\mathbb{G}$.

\subsection{The invariance of the Kauffman state sum $\langle D, c\rangle$}

\begin{prop}
\label{invtheo}
The Kauffman state sum $\langle D , c \rangle $ is invariant under the Reidemeister moves (II) -- (V)  in Fig. \ref{fig:e25}, and its variations under Reidemeister move (I) are given as below.
\begin{equation*}\label{r1}
t^{i} \left< \begin{tikzpicture}[baseline, thick, scale=0.4]
\draw (1,-1)   -- (0.2,-0.2);
\draw (-1, -1) [->] -- (1, 1) node[above]{$i$};
\draw (-0.2,0.2) --  (-1,1) ;
\draw (-1, 1) arc (90:270:1);
\end{tikzpicture} \right >
=\left< \begin{tikzpicture}[baseline=-0.65ex, thick, scale=0.4]
\draw (1,-1)   -- (0.2,-0.2);
\draw (-1, -1) -- (1, 1) ;
\draw (-0.2,0.2)  [->]--  (-1,1)node[above]{$i$} ;
\draw (1,-1) arc (-90:90:1);
\end{tikzpicture} \right >
=\left< \begin{tikzpicture}[baseline=-0.65ex, thick, scale=0.4]
\draw (-1,-1)   -- (-0.2,-0.2);
\draw (1, -1) -- (-1, 1) ;
\draw (0.2,0.2) [->] --  (1,1) node[above]{$i$};
\draw (-1, 1) arc (90:270:1);
\end{tikzpicture} \right >
=t^{-i} \left< \begin{tikzpicture}[baseline=-0.65ex, thick, scale=0.4]
\draw (-1,-1)   -- (-0.2,-0.2);
\draw (1, -1) [->] -- (-1, 1) node[above]{$i$};
\draw (0.2,0.2)  --  (1,1);
\draw (1,-1) arc (-90:90:1);
\end{tikzpicture} \right >
=\left< \begin{tikzpicture}[baseline=-0.65ex, thick, scale=0.4]
\draw (0, -1) [->] -- (0, 1) node[above]{$i$};
\end{tikzpicture} \right >.
\end{equation*}

\end{prop}

\begin{proof}

The main trick is to apply Proposition \ref{initial} and choose the location of the base point $\delta$ that is most convenient for calculations.

For move (I), we assume that it occurs away from the base point. Take the following figure for example, where the left-hand diagram is $D$ and the right-hand one is $D'$. There is a one-to-one correspondence between the states of $D$ and those of $D'$ since the additional region of $D$ has only one corner. It then follows from (\ref{alexander}) and the local contributions described in Fig. \ref{fig:e1} that $\langle D, c \rangle=t^{i}\langle D' , c \rangle$. The proofs for the other cases of move (I) follow from similar arguments.
\begin{figure}[h!]
\begin{tikzpicture}[baseline=-0.65ex, thick, scale=0.4]
\draw (-1,-1)   -- (-0.2,-0.2);
\draw (1, -1) [->] -- (-1, 1) node[above]{$i$};
\draw (0.2,0.2)  --  (1,1);
\draw (1,-1) arc (-90:90:1);
\draw (0.6, 0) node {$\bullet$};
\end{tikzpicture} $\longleftrightarrow$
\begin{tikzpicture}[baseline=-0.65ex, thick, scale=0.4]
\draw (0, -1) [->] -- (0, 1) node[above]{$i$};
\end{tikzpicture}
\end{figure}

The proof of the invariance of $\langle D, c \rangle$ under moves (II), (III) is similar to the proof of \cite[Theorem 4.3]{MR712133}. For move (II), we place the base point in a position as below. Suppose the left-hand graph is $D$ and the right-hand one is $D'$.  We split the set $S(D, \delta)$ as a disjoint union of $A$ and $B$, where $B$ consists of all the states which assign the bottom crossing of $D$ to its south corner and $A$ is the complement of $B$. The contribution of $A$ to $\langle D , c\rangle$ is $0$ since the states are paired up (white states and black states) and the contribution from each pair adds up to $0$. There is a one-to-one correspondence between $B$ and $S(D', \delta)$.
Since for each state in $B$ the contribution from the two crossings involved is $1$, we conclude that $\langle D , c\rangle=\langle D', c\rangle$.

\begin{figure}[h!]
\begin{tikzpicture}[baseline, thick, scale=0.4]
\draw (-1, 2) arc (90:270:2);
\draw (-3.5, 2) arc (90:60:2);
\draw (-1.5, 0) arc (0:40:2);
\draw (-1.5, 0) arc (0:-40:2);
\draw (-3.5, -2) arc (270:300:2);
\draw (-2.25, 1) node[circle,fill,inner sep=1.5pt]{};
\draw (-2.25, -1) node {$\circ$};
\draw (-1.6, 1.4) node {$\circ$};
\draw (-1.6, -1.4) node[circle,fill,inner sep=1.5pt]{};
\draw (-3.5, 2) node {$\ast$};
\draw (-3.5, 1.3) node {$\delta$};
\draw (-5, 0) node {$A:$};
\end{tikzpicture} \quad  $\bigcup$ \quad
\begin{tikzpicture}[baseline, thick, scale=0.4]
\draw (-1, 2) arc (90:270:2);
\draw (-3.5, 2) arc (90:60:2);
\draw (-1.5, 0) arc (0:40:2);
\draw (-1.5, 0) arc (0:-40:2);
\draw (-3.5, -2) arc (270:300:2);
\draw (-2.25, 1) node[circle,fill,inner sep=1.5pt]{};
\draw (-2.25, -2) node[circle,fill,inner sep=1.5pt]{};
\draw (-3.5, 2) node {$\ast$};
\draw (-3.5, 1.3) node {$\delta$};
\draw (-5, 0) node {$B:$};
\end{tikzpicture}$\quad \longleftrightarrow$ \quad
\begin{tikzpicture}[baseline, thick, scale=0.4]
\draw (-1, 2) arc (110:250:2);
\draw (-5, 2) arc (70:-70:2);
\draw (-5, 2) node {$\ast$};
\draw (-5, 1.3) node {$\delta$};
\end{tikzpicture}
\end{figure}

For move (III), we consider the move in the following figure for illustration, and the invariance under other cases can be proved similarly. We assume that $\delta$ is located on the edge with color $i$. Around the triangle where the move occurs, we have three crossings and five local regions not adjacent to $\delta$ (some of them can be adjacent to $\delta$ or belong to the same region globally), so for each state there are exactly two blank local regions in the sense that there is no crossing assigned to them around the triangle. Suppose the left-hand diagram is $D$ and the right-hand one is $D'$. We compare the states of $D$ and $D'$ that have the same blank local regions around the triangle and are identical outside the triangle.

\begin{figure}[h!]
\begin{tikzpicture}[baseline, thick, scale=0.4]
\draw (-3, 3) [<-] to (-0.4, 0.4) ;
\draw (-3.5, 1.5) [<-] to (-2.2, 1.5) ;
\draw (2.2, 1.5) -- (3.5, 1.5) ;
\draw (-1, 1.5) -- (1, 1.5) ;
\draw (0.4, -0.4) -- (2, -2) ;
\draw (3, 3) [<-] to (-2, -2) ;
\draw (-2, -2) node {$\ast$};
\draw (-2.2, -1.1) node {$\delta$};
\draw (3, 0) node {$R_1$};
\draw (0, 3) node {$R_2$};
\draw (-2.4, 1.9) node[circle,fill,inner sep=1.5pt]{};
\draw (0, 0.5) node[circle,fill,inner sep=1.5pt]{};
\draw (2.4, 1.9) node[circle,fill,inner sep=1.5pt]{};
\draw (3.5, 3) node {$i$};
\draw (3, -2) node {$j$};
\draw (4, 1.5) node {$k$};
\end{tikzpicture}\quad $\longleftrightarrow$ \quad
\begin{tikzpicture}[baseline, thick, scale=0.4]
\draw (-2, 3) [<-] to (-0.4, 1.4) ;
\draw (-3.5, -0.5) [<-] to (-2.2, -0.5) ;
\draw (2.2, -0.5) -- (3.5, -0.5) ;
\draw (-1, -0.5) -- (1, -0.5) ;
\draw (0.4, 0.6) -- (3, -2) ;
\draw (2, 3) [<-] to (-3, -2) ;
\draw (-3, -2) node {$\ast$};
\draw (-3.4, -1.3) node {$\delta$};
\draw (3.5, -1.2) node {$R_1$};
\draw (0, 2.5) node {$R_2$};
\draw (-1.8, -0.1) node[circle,fill,inner sep=1.5pt]{};
\draw (0, 0.5) node[circle,fill,inner sep=1.5pt]{};
\draw (-0.5, 1) node {$\circ$};
\draw (1.8, -0.1) node[circle,fill,inner sep=1.5pt]{};
\draw (-0.7, -0.1) node {$\circ$};
\draw (-5, 0) node {$A:$};
\end{tikzpicture} \quad  $\bigcup$ \quad
\begin{tikzpicture}[baseline, thick, scale=0.4]
\draw (-2, 3) [<-] to (-0.4, 1.4) ;
\draw (-3.5, -0.5) [<-] to (-2.2, -0.5) ;
\draw (2.2, -0.5) -- (3.5, -0.5) ;
\draw (-1, -0.5) -- (1, -0.5) ;
\draw (0.4, 0.6) -- (3, -2) ;
\draw (2, 3) [<-] to (-3, -2) ;
\draw (-3, -2) node {$\ast$};
\draw (-3.4, -1.3) node {$\delta$};
\draw (3.5, -1.2) node {$R_1$};
\draw (0, 2.5) node {$R_2$};
\draw (0.5, 1) node[circle,fill,inner sep=1.5pt]{};
\draw (0.7, -0.1) node[circle,fill,inner sep=1.5pt]{};
\draw (-1.8, -0.1) node[circle,fill,inner sep=1.5pt]{};
\draw (-5, 0) node {$B:$};
\end{tikzpicture}
\end{figure}

For instance, consider the states whose blank regions are $R_1$ and $R_2$.
Each of such states of $D$ has a unique local assignment as in the left-hand figure. One can check that the contribution from the three crossings  is $t^{-(i+j)}$. On $D'$ there are three types of states with the indicated blank regions. The set $A$ comprises the first two types of states.  Their contribution to $\langle D' , c\rangle$ vanishes since the states are paired up (white states and black states) and the contribution from each pair is zero. For each state of $B$, the contribution from the three crossings is $t^{-(i+j)}$. As a result, for the states with the blank regions $R_1$ and $R_2$, their contributions to the state sum on both sides are the same. For the states with different blank regions, we can do similar calculations to verify that the contributions from both sides are identical as well. Consequently, we have  $\langle D ,c \rangle=\langle D' , c \rangle$.

For move (V), the twist creates a crossing on the diagram. If the two edges of the crossing are outgoing edges, we assume that $\delta$ is away from the twist. Since the newly created crossing must be assigned to its south corner whose contribution is $1$, we see that $\langle D,c\rangle$ is invariant. Now we consider the case that the two edges of the crossing are incoming edges. We assume that $\delta$ is located as in the figure below. Suppose the left-hand diagram is $D$ and the right-hand one is $D'$. The assignment of the two crossings of type \diaCircle on $D$ is unique, and their contribution to $\langle D , c\rangle$ is $t^{j/2}(t^{-i/2}-t^{i/2})$. On $D'$ there are three types of local assignments, and the sum of their contributions to $\langle D', c \rangle$ is $$t^{-i/2+j}(t^{-j/2}-t^{j/2})+t^{j/2+j}(t^{-i/2}-t^{i/2})-t^{i/2+j}(t^{-j/2}-t^{j/2})=t^{j/2}(t^{-i/2}-t^{i/2}).$$ Therefore we see that $\langle D , c \rangle=\langle D' , c \rangle$.

\begin{figure}[h!]
\begin{tikzpicture}[baseline=-0.65ex, thick, scale=1.4]
\draw (0, 0) [->]-- (0.5, 0.5);
\draw (0, 0) [->]-- (-0.5,0.5);
\draw (0, 0) [->]-- (0.2, 0.5);
\draw (0, 0) circle (0.3);
\draw (0, 0) node[circle,fill,inner sep=1pt]{};
\draw [dashed] (-0.5, 0)--(0.5, 0);
\draw (0.6, -0.6) node{$...$};
\draw (-0.6, -0.6) node{$...$};
\draw (-0.1, 0.4) node{$...$};
\draw (0,0) [-<-] to (0.5, -1);
\draw (0,0) [-<-] to (-0.5, -1);
\draw (-0.4, -0.8) node{$\ast$};
\draw (-0.7, -0.8) node{$\delta$};
\draw (-0.2, -0.8) node{$i$};
\draw (0.6, -0.8) node{$j$};
\draw (-0.1, -0.2) node[circle,fill,inner sep=1.5pt]{};
\draw (0.25, -0.3) node[circle,fill,inner sep=1.5pt]{};
\end{tikzpicture} \quad $\longleftrightarrow$ \quad
\begin{tikzpicture}[baseline=-0.65ex, thick, scale=1.4]
\draw (0, 0) [->]-- (0.5, 0.5);
\draw (0, 0) [->]-- (-0.5,0.5);
\draw (0, 0) [->]-- (0.2, 0.5);
\draw (0, 0) circle (0.3);
\draw (0, 0) node[circle,fill,inner sep=1pt]{};
\draw [dashed] (-0.5, 0)--(0.5, 0);
\draw (0.6, -0.6) node{$...$};
\draw (-0.6, -0.6) node{$...$};
\draw (-0.1, 0.4) node{$...$};
\draw (0,0) to [out=225,in=90] (-0.25,-0.4) to [out=270,in=315] (-0.1,-0.65);
\draw (0.5, -1) [->-] to (-0.1,-0.65);
\draw (-0.5, -1) [->-] to (-0.1, -0.75);
\draw (0,0) to [out=315,in=90] (0.25,-0.4) to [out=270,in=225] (0.1,-0.65);
\draw (-0.2, -0.8) node{$\ast$};
\draw (-0.5, -0.8) node{$\delta$};
\draw (-0.15, -0.15) node[circle,fill,inner sep=1.5pt]{};
\draw (0.15, -0.35) node[circle,fill,inner sep=1.5pt]{};
\draw (0.2, -0.7) node[circle,fill,inner sep=1.5pt]{};
\end{tikzpicture}   $\bigcup$
\begin{tikzpicture}[baseline=-0.65ex, thick, scale=1.4]
\draw (0, 0) [->]-- (0.5, 0.5);
\draw (0, 0) [->]-- (-0.5,0.5);
\draw (0, 0) [->]-- (0.2, 0.5);
\draw (0, 0) circle (0.3);
\draw (0, 0) node[circle,fill,inner sep=1pt]{};
\draw [dashed] (-0.5, 0)--(0.5, 0);
\draw (0.6, -0.6) node{$...$};
\draw (-0.6, -0.6) node{$...$};
\draw (-0.1, 0.4) node{$...$};
\draw (0,0) to [out=225,in=90] (-0.25,-0.4) to [out=270,in=315] (-0.1,-0.65);
\draw (0.5, -1) [->-] to (-0.1,-0.65);
\draw (-0.5, -1) [->-] to (-0.1, -0.75);
\draw (0,0) to [out=315,in=90] (0.25,-0.4) to [out=270,in=225] (0.1,-0.65);
\draw (-0.2, -0.8) node{$\ast$};
\draw (-0.5, -0.8) node{$\delta$};
\draw (0.15, -0.15) node[circle,fill,inner sep=1.5pt]{};
\draw (-0.15, -0.35) node[circle,fill,inner sep=1.5pt]{};
\draw (0.2, -0.7) node[circle,fill,inner sep=1.5pt]{};
\end{tikzpicture}
 $\bigcup$
\begin{tikzpicture}[baseline=-0.65ex, thick, scale=1.4]
\draw (0, 0) [->]-- (0.5, 0.5);
\draw (0, 0) [->]-- (-0.5,0.5);
\draw (0, 0) [->]-- (0.2, 0.5);
\draw (0, 0) circle (0.3);
\draw (0, 0) node[circle,fill,inner sep=1pt]{};
\draw [dashed] (-0.5, 0)--(0.5, 0);
\draw (0.6, -0.6) node{$...$};
\draw (-0.6, -0.6) node{$...$};
\draw (-0.1, 0.4) node{$...$};
\draw (0,0) to [out=225,in=90] (-0.25,-0.4) to [out=270,in=315] (-0.1,-0.65);
\draw (0.5, -1) [->-] to (-0.1,-0.65);
\draw (-0.5, -1) [->-] to (-0.1, -0.75);
\draw (0,0) to [out=315,in=90] (0.25,-0.4) to [out=270,in=225] (0.1,-0.65);
\draw (-0.2, -0.8) node{$\ast$};
\draw (-0.5, -0.8) node{$\delta$};
\draw (-0.15, -0.15) node[circle,fill,inner sep=1.5pt]{};
\draw (0.3, -0.25) node[circle,fill,inner sep=1.5pt]{};
\draw (0, -0.6) node[circle,fill,inner sep=1.5pt]{};
\end{tikzpicture}
\end{figure}

For the invariance under move (IV), we again argue by comparing the states of two diagrams before and after the move.  The proof is tedious nonetheless straightforward, so we will only write down the idea but omit the details.  Take the following move for example, where the transverse edge passes over other edges. We assume that $\delta$ is on the transverse edge with color $i$ as indicated. Consider the crossings of the left-hand diagram $D$ lying on the transverse edge. There are essentially only two different assignments of them that we need to consider, the first of which assigns the crossing $C_1$ to its east corner, and the second to its north corner. Other assignments, if any, always appear in pairs and the contribution of each pair to $\langle D, c \rangle$ vanishes.

Consider the states where $C_{1}$ is assigned to its east corner. Then the states of the right-hand diagram $D'$ which have the same blank regions must assign $C_1$ to its east corner as well, as shown below. We can then check that their contributions to the state sum on both sides are the same.

\vspace{2mm}
\noindent
\begin{minipage}{\linewidth}
\makebox[\linewidth]
{\includegraphics[keepaspectratio=true,scale=0.4]{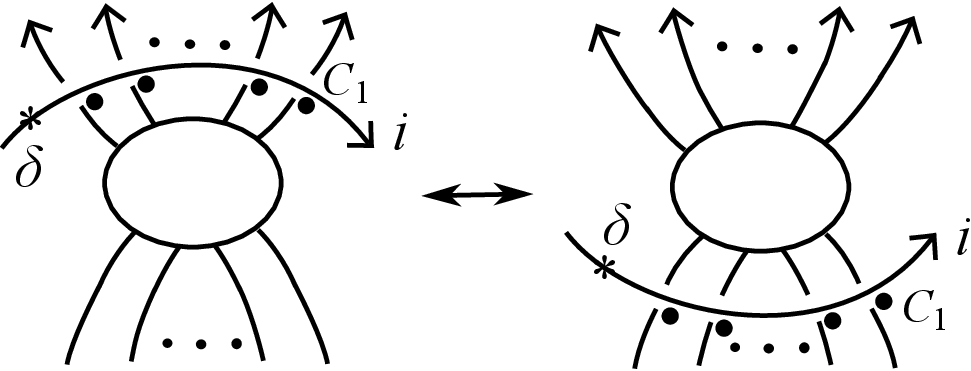}}
\end{minipage}
\vspace{2mm}

Consider the states of $D$ where $C_{1}$ is assigned to its north corner and $C_{2}$ is not assigned to its east corner. In this case, the states of $D'$ with the same blank regions are divided into two sets $A$ and $B$, where the each state in $A$ assigns $C_1$ to its north corner and each state in $B$ assigns $C_1$ to its west or south corner, as shown below. The states in $B$, no matter how many, always appear in pairs and the sum of their contributions to the state sum is zero.  Hence, we also have in this case that the contribution from the states of $D$ equals that from $A$.

\vspace{2mm}
\noindent
\begin{minipage}{\linewidth}
\makebox[\linewidth]
{\includegraphics[keepaspectratio=true,scale=0.4]{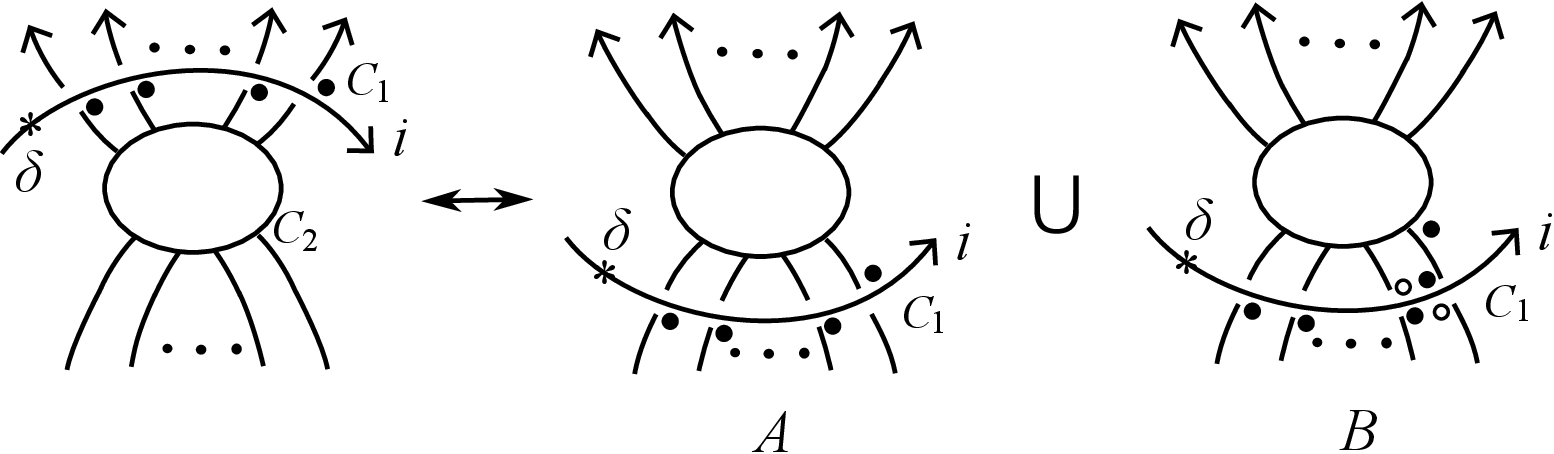}}
\end{minipage}
\vspace{2mm}

Consider the states of $D$ where $C_{1}$ is assigned to its north corner and $C_{2}$ is assigned to its east corner. For each of such states, there must be a blank region under the circle region. We shadow this region, as the example shown below. The states of $D'$ which provide the same blank regions are divided into two sets $A$ and $B$, where each state in $A$ sends the crossing in the northwest corner of the shadow region to its south corner, and each state in $B$ sends the crossing to its west or south corner. The states in $B$, no matter how many, can always be paired up and the sum of their contributions to the state sum is zero. This again shows that the contribution from the states of $D$ in this case equals that from $A$. Combining the discussion above, we complete the proof of the invariance of $\langle D , c \rangle$ under move (IV).

\vspace{2mm}
\noindent
\begin{minipage}{\linewidth}
\makebox[\linewidth]
{\includegraphics[keepaspectratio=true,scale=0.4]{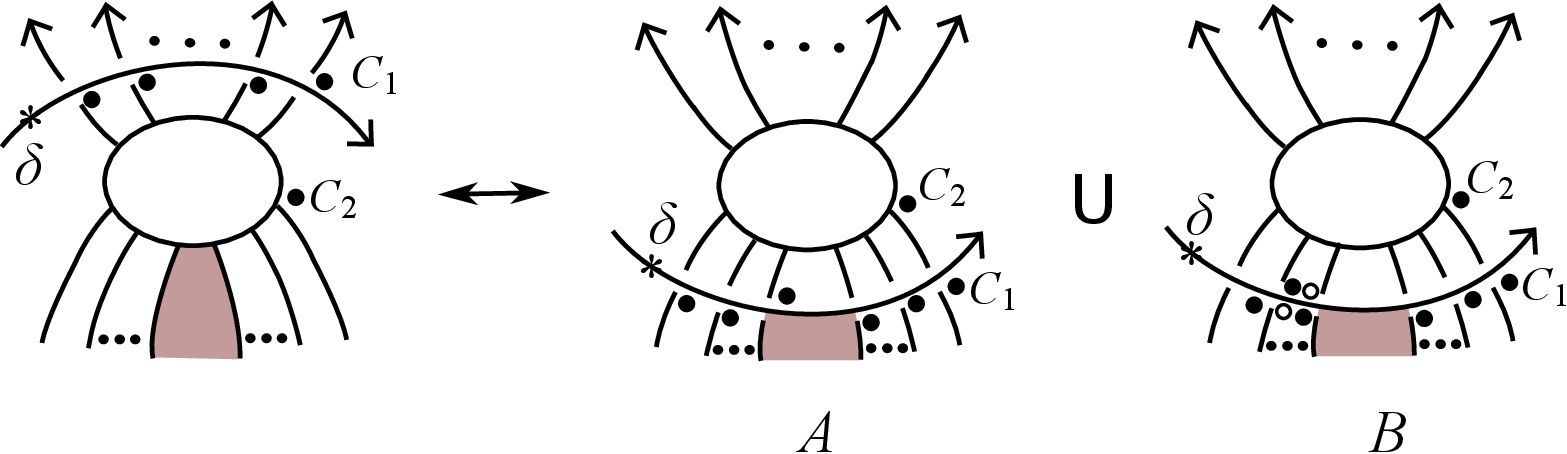}}
\end{minipage}
\vspace{2mm}

\end{proof}

\begin{defn}\label{Alex}
\rm
Let $(G, c)$ be an MOY graph. The {\it Alexander polynomial} of $(G,c)$, denoted $\Delta_{(G, c)}(t)$, can be defined from any of its graph diagram $(D,c)$ by
\begin{equation}\label{Def:Alex}
\Delta_{(G, c)}(t) := \frac{\langle D,c \rangle}{(t^{-1/2}-t^{1/2})^{\vert V \vert-1}} ,
\end{equation}
where $\vert V \vert$ is the number of vertices of $G$.
\end{defn}

The term $(t^{-1/2}-t^{1/2})^{\vert V \vert-1}$ in the denominator of Formula (\ref{Def:Alex}) is clearly an invariant of the graph. The advantage for introducing this factor in the definition of the Alexander polynomial will become transparent when we discuss the properties of $\Delta_{(D, c)}(t)$ for plane graphs (Theorem \ref{condition}).

Together with Proposition \ref{initial} and Proposition \ref{invtheo}, it implies that $\Delta_{(G, c)}(t)$ is well-defined up to a power of $t$, just like the classical Alexander polynomial of links. 

\vspace{3mm}

\subsection{The normalization of the Alexander polynomial}

In this part, we present a way of normalizing the Alexander polynomial $\Delta_{(G, c)}(t)$ that eliminates the $ t^k$-ambiguity in its definition when we fix a framing on $G$.
Whereas our idea is applicable to all framed MOY graphs, we will focus on the class of trivalent graphs, because it is the natural setting for the topic in Section 4 on MOY calculus.

Recall that a {\it framing} of $G$ is an embedded compact surface $F\subset S^3$ in which $G$ is sitting as a deformation retract.  A {\it framed graph} is a graph equipped with a framing.  More precisely, each vertex of $G$ is replaced by a disk in $F$ where the vertex is the center of the disk, and each edge of $G$ is replaced by a strip $[0, 1]\times [0, 1]$ where $[0, 1]\times \{0, 1\}$ is attached to the boundaries of its adjacent vertex disks and the edge is $\{\frac{1}{2}\}\times [0, 1]$. Each graph diagram of $G$ in $\mathbb{R}^{2}$ has a {\it blackboard framing}, whose projection in $\mathbb{R}^{2}$ is the tubular neighborhood of the graph diagram in $\mathbb{R}^{2}$. The difference between a generic framing and a blackboard framing can be represented on the diagram by introducing the symbols \Ptwist and \Ntwist, which mean a positive half twist and a negative half twist respectively at the fragment. Therefore we can use a graph diagram with \Ptwist and \Ntwist to represent a framed graph, as indicated in Fig. \ref{fig:e27}.

\begin{figure}[h!]
	\centering
		\includegraphics[width=0.6\textwidth]{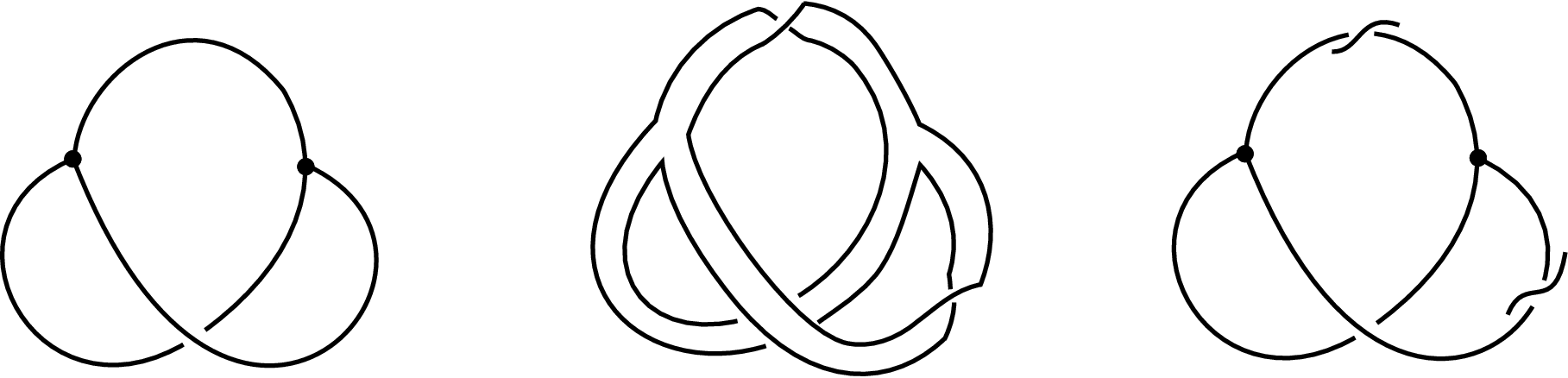}
	\caption{A framed graph and its graph diagram.}
	\label{fig:e27}
\end{figure}

Our idea of normalization is simple and natural.  Starting from a framed graph diagram, we hope to construct an appropriate factor that cancels the $t^{\pm i}$ factor in the Alexander polynomial coming from Reidemeister move (I) in Proposition \ref{invtheo} while keeping invariant under the other types of Reidemeister moves.

For framed trivalent graphs, the following result is well-known to experts (see for example \cite{MR2255851}).

\begin{lemma}
Any two graph diagrams for a framed trivalent graph can be connected by a sequence of Reidemeister moves in Fig. \ref{fig:e14}.
\end{lemma}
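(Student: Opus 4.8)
The plan is to treat this as a standard general-position theorem of Reidemeister type, transported from the setting of $1$-complexes to that of framed (ribbon) trivalent graphs. The guiding principle is that two graph diagrams for a given framed trivalent graph are nothing but two generic planar projections of a single embedded framing surface $F\subset S^{3}$ (the compact surface introduced in the previous paragraph, in which $G$ sits as a deformation retract), and that any two such embeddings are joined by an ambient isotopy. The moves in Fig.~\ref{fig:e14} are then exactly the catalogue of codimension-one events that a sufficiently generic such isotopy is forced to cross, so the content of the lemma is the completeness of that catalogue.

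First I would fix the model. Instead of working with $G$ as a $1$-complex I would work with its framing surface $F$, since it is $F$ that records the framing data encoded on the diagram by the twist symbols \Ptwist\ and \Ntwist. After choosing a generic projection direction, a diagram is the image of $F$ under projection to $\mathbb{R}^{2}$, read off through the blackboard framing together with those isolated twists. Given two diagrams $D_{0}$ and $D_{1}$ of the same framed graph, the defining isotopy of framed graphs extends, by the isotopy extension theorem, to an ambient isotopy $\Phi_{t}$ of $S^{3}$ carrying $F_{0}$ to $F_{1}$; a standard preliminary reduction lets me assume $\Phi_{t}$ preserves a common projection direction, at the cost of only finitely many of the listed moves.

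The heart of the argument is the general-position step. I would perturb the one-parameter family $\pi\circ\Phi_{t}(F)$ so that for all but finitely many $t$ the projection is a regular diagram (transverse double points away from vertices, vertices in generic position, framing recorded by isolated half-twists), while at each of the finitely many exceptional parameters exactly one codimension-one degeneracy occurs. I would then classify these degeneracies and match each to a move in Fig.~\ref{fig:e14}. The events away from the vertices are the classical ones: a tangency of two strands yields move (II), a triple point yields move (III), and, because the framing is tracked, a curl cannot simply be undone but must be traded for half-twists (the framed analogue of move (I)). The genuinely new events are localized at a trivalent vertex: a strand passing across a vertex yields move (IV)-type relations, while the generic rearrangements of the three edge-ends and the incident ribbon at a single vertex account for the vertex-twist and edge-slide moves.

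The main obstacle is precisely the combinatorial completeness of this classification, in particular the events localized at a vertex and those involving the ribbon structure. One must verify that \emph{every} generic codimension-one singularity of the projection of a framed trivalent graph falls into the finite list depicted in Fig.~\ref{fig:e14}, with no omitted case: that all framing changes are generated by the symbols \Ptwist\ and \Ntwist, and that sliding an edge or the ribbon past a vertex introduces nothing beyond the listed moves. This bookkeeping is tedious but routine, which is why the statement is flagged as well-known and attributed to the literature (see \cite{MR2255851}); I would spell out the vertex and twist cases and defer the purely classical tangency and triple-point analysis to that reference.
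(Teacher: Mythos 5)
The paper does not actually prove this lemma: it states that the result ``should be well-known to experts'' and simply cites Viro \cite{MR2255851}, so there is no in-paper argument to compare yours against. Your outline is the standard strategy one would use to establish such a statement --- replace the framed graph by its framing surface $F$, connect the two embeddings by an ambient isotopy via the isotopy extension theorem, put the projected one-parameter family in general position, and classify the finitely many codimension-one degeneracies, matching each to a move in Fig.~\ref{fig:e14}. This is consistent with how such Reidemeister-type theorems are proved in the literature the paper points to, and you correctly identify that the moves (0), (IV), and (V) arise from the vertex-local and ribbon-local events while (I)--(III) are the classical ones.

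That said, be aware that your proposal is a proof \emph{scheme} rather than a proof: the entire mathematical content of the lemma is the completeness of the catalogue of local moves, and this is exactly the step you flag as ``tedious but routine'' and defer. In particular, two points deserve explicit verification rather than assertion: (a) that every change of framing along an edge is generated by the half-twist symbols \Ptwist\ and \Ntwist\ together with move (0), i.e., that the ribbon over an edge can always be normalized to the blackboard framing away from isolated twists; and (b) that the vertex-local events (an edge-end rotating around a vertex disk, a distant strand sweeping across a vertex) generate nothing beyond moves (IV) and (V). Since the paper itself declines to carry out this bookkeeping and defers to \cite{MR2255851}, your treatment is no less complete than the paper's, but if you intend this as a self-contained proof you would need to exhibit the full list of generic singularities of projections of ribbon trivalent graphs and check it against Fig.~\ref{fig:e14} case by case.
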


\begin{figure}[h!]
	\centering
		\includegraphics[width=0.6\textwidth]{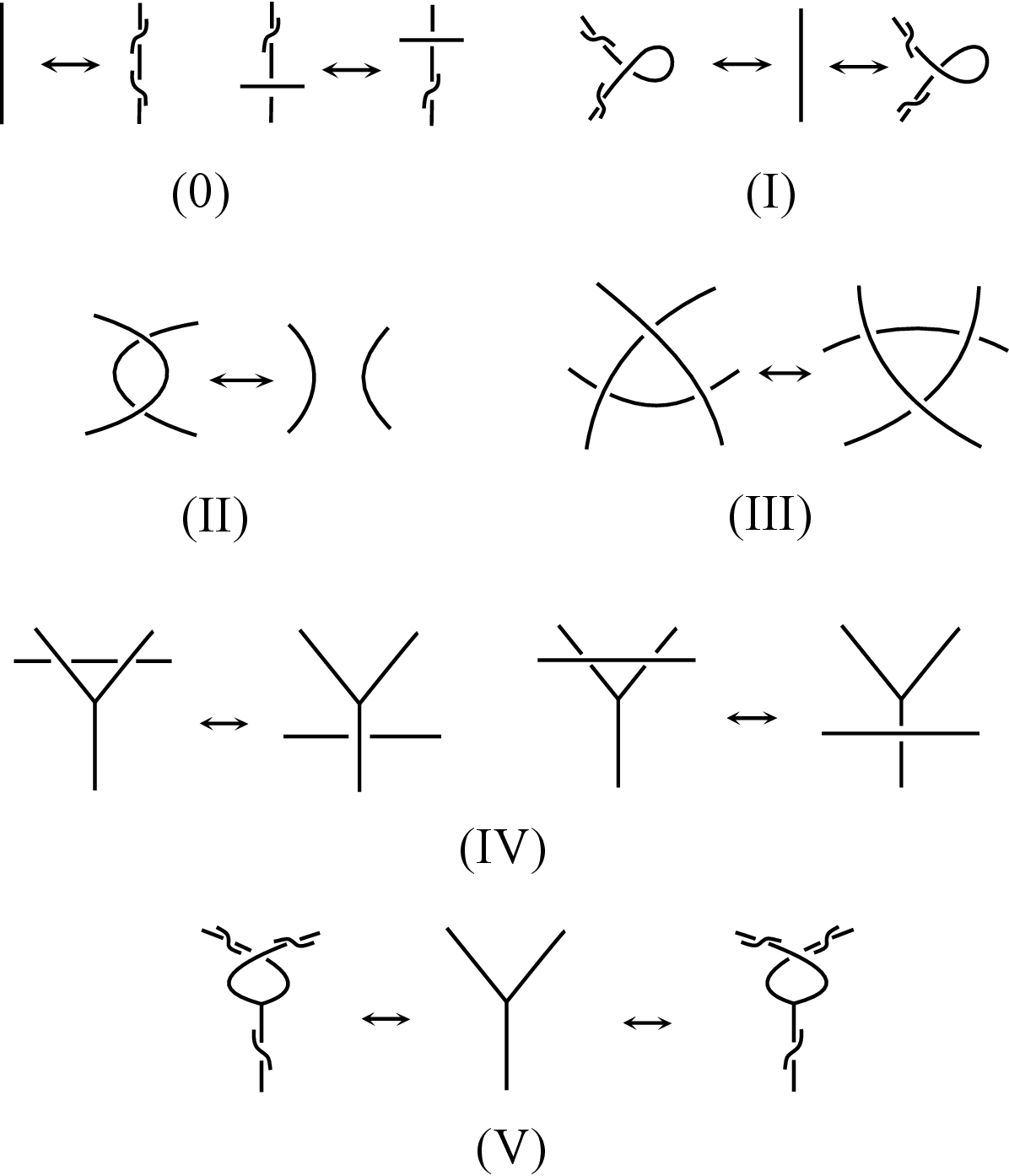}
	\caption{The Reidemeister moves for framed trivalent graphs.}
	\label{fig:e14}
\end{figure}

From now on, we use the blackboard-bold letters $\mathbb{G}$ and $\mathbb{D}$ to denote a framed trivalent graph and diagram, respectively.  We first define a framing factor.

\begin{defn}
\rm
For each edge $e$ with color $i$, let
\begin{equation*}\mathcal{F}(e)=t^{i/4[(\sharp \Ntwist) \, -\, (\sharp \Ptwist)]},
\end{equation*}
where $\sharp \Ntwist$ and $\sharp \Ptwist$ denote the number of \Ntwist and that of \Ptwist on $e$, respectively. The {\it framing factor} associated to $(\mathbb{D},c)$ is define as
\begin{equation*}\label{def:framing}
\mathcal{F}(\mathbb{D},c)=\prod_{e\in E} \mathcal{F}(e),
\end{equation*}
where $E$ is the edge set of $\mathbb{D}$.
\end{defn}

From $D$ we can obtain a diagram of an oriented link $L_{(D, c)}$ by the local transformation depicted in Fig. \ref{fig:e24}, where we replace each edge of color $i$ by $i$ parallel strands.

\begin{figure}
\begin{tikzpicture}[baseline=-0.65ex, thick, scale=0.8]
\draw (0,-1)  [->]-- (0,1);
\draw (0.3, 0.8) node {$4$} ;
\draw (1, 0) node {$\Longrightarrow$} ;
\draw (2,-1)  [->]-- (2,1);
\draw (2.5,-1)  [->]-- (2.5,1);
\draw (3,-1)  [->]-- (3,1);
\draw (3.5,-1)  [->]-- (3.5,1);
\end{tikzpicture}\quad \quad\quad
\begin{tikzpicture}[baseline=-0.65ex, thick, scale=0.8]
\draw (0,-1)  [->-]-- (0,0);
\draw (0,0)  [->]-- (1,1);
\draw (0,0)  [->]-- (-1,1);
\draw (1.3, 0.8) node {$3$} ;
\draw (-1.3, 0.8) node {$1$} ;
\draw (0.3, -0.8) node {$4$} ;
\end{tikzpicture}$\Longrightarrow$
\begin{tikzpicture}[baseline=-0.65ex, thick, scale=0.8]
\draw (1.2,-1) to [out=90, in=270] (1.2,0) [->] to [out=90,in=225] (2,1);
\draw (0.7,-1) to [out=90, in=270] (0.7,0) [->] to [out=90,in=225] (1.5,1);
\draw (-0.3,-1) to [out=90, in=270] (-0.3,0) [->] to [out=90,in=315] (-1.1,1);
\draw (0.2,-1) to [out=90, in=270] (0.2,0) [->] to [out=90,in=225] (1,1);
\end{tikzpicture}
	\caption{The way we obtain the link $L_{(D, c)}$ from a trivalent diagram $D$.}
	\label{fig:e24}
\end{figure}

\begin{defn}
\rm
We define the {\it curliness} of $D$ with respect to $c$ by
\begin{align*}\label{def:colredcurliness}
\mathcal{C}(D, c)=t^{1/2[(\sharp \,\,
\begin{tikzpicture}[baseline=-0.65ex,scale=0.2]
\draw[
        decoration={markings, mark=at position 0.315 with {\arrow{>}}},
        postaction={decorate}
        ]
(0,0) circle (1);
\end{tikzpicture}) \, - \,
(\sharp \,\,
\begin{tikzpicture}[baseline=-0.65ex,scale=0.2]
\draw[
        decoration={markings, mark=at position 0.315 with {\arrow{<}}},
        postaction={decorate}
        ]
(0,0) circle (1);
\end{tikzpicture})]
},
\end{align*}
where $\sharp \,\,
\begin{tikzpicture}[baseline=-0.65ex,scale=0.2]
\draw[
        decoration={markings, mark=at position 0.315 with {\arrow{>}}},
        postaction={decorate}
        ]
(0,0) circle (1);
\end{tikzpicture}$ and $\sharp \,\,
\begin{tikzpicture}[baseline=-0.65ex,scale=0.2]
\draw[
        decoration={markings, mark=at position 0.315 with {\arrow{<}}},
        postaction={decorate}
        ]
(0,0) circle (1);
\end{tikzpicture}$ denote respectively the numbers of counter-clockwise and clockwise oriented curves that we obtain after resolving the crossings of $L_{(D, c)}$ in the oriented way.
\end{defn}

As the notation indicates, $\mathcal{C}(D, c)$ does not depend on framing. Roughly speaking, $\mathcal{C}(D, c)$ measures the total ``winding number'' of the diagram $D$ with the weight $c$.  Our next lemma is evident from this viewpoint.

\begin{lemma}
\label{curli}
The curliness $\mathcal{C}(D, c)$ satisfies the following local relation:
\begin{equation*}
\mathcal{C}\left( \begin{tikzpicture}[baseline=-0.65ex, thick, scale=0.5]
\draw (1,-1) to [out=180, in=270] (0.5,0) [->-] to [out=90,in=180] (1,1);
\draw (-1,-1) to [out=0, in=270] (-0.5,0) [-<-] to [out=90,in=0] (-1,1);
\draw (1.2, 0) node {$i$};
\draw (-1.2, 0) node {$i$};
\end{tikzpicture} \right )
=t^{-i/2}\mathcal{C}\left( \begin{tikzpicture}[baseline=-0.65ex, thick, scale=0.5]
\draw (-1,-1) to [out=90, in=180] (0, -0.5) [-<-] to [out=0,in=90] (1,-1);
\draw (-1,1) to [out=270, in=180] (0, 0.5) [->-] to [out=0,in=270] (1,1);
\draw (1.2, 0.3) node {$i$};
\draw (-1.2, -0.3) node {$i$};
\end{tikzpicture} \right ).
\end{equation*}
The curliness $\mathcal{C}(D, c)$ is invariant under moves (II) and (III). Its changes under move (I) are given as follows.
\begin{equation*}\label{r1}
t^{-i/2}\mathcal{C} \left( \begin{tikzpicture}[baseline, thick, scale=0.35]
\draw (1,-1)   -- (0.2,-0.2);
\draw (-1, -1) [->] -- (1, 1) node[above]{$i$};
\draw (-0.2,0.2) --  (-1,1) ;
\draw (-1, 1) arc (90:270:1);
\end{tikzpicture} \right )
=t^{-i/2}\mathcal{C}\left( \begin{tikzpicture}[baseline=-0.65ex, thick, scale=0.35]
\draw (-1,-1)   -- (-0.2,-0.2);
\draw (1, -1) -- (-1, 1) ;
\draw (0.2,0.2) [->] --  (1,1) node[above]{$i$};
\draw (-1, 1) arc (90:270:1);
\end{tikzpicture} \right )
=t^{i/2}\mathcal{C}\left( \begin{tikzpicture}[baseline=-0.65ex, thick, scale=0.35]
\draw (1,-1)   -- (0.2,-0.2);
\draw (-1, -1) -- (1, 1) ;
\draw (-0.2,0.2)  [->]--  (-1,1)node[above]{$i$} ;
\draw (1,-1) arc (-90:90:1);
\end{tikzpicture} \right )
=t^{i/2} \mathcal{C}\left( \begin{tikzpicture}[baseline=-0.65ex, thick, scale=0.35]
\draw (-1,-1)   -- (-0.2,-0.2);
\draw (1, -1) [->] -- (-1, 1) node[above]{$i$};
\draw (0.2,0.2)  --  (1,1);
\draw (1,-1) arc (-90:90:1);
\end{tikzpicture} \right )
=\mathcal{C}\left( \begin{tikzpicture}[baseline=-0.65ex, thick, scale=0.35]
\draw (0, -1) [->] -- (0, 1) node[above]{$i$};
\end{tikzpicture} \right ).
\end{equation*}
\end{lemma}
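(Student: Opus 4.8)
The plan is to interpret the colored curliness as a color-weighted total turning number and to reduce every assertion to how this number behaves under a local modification. Let $\widehat{L}_{(D,c)}$ denote the collection of disjoint oriented circles obtained from $L_{(D,c)}$ after the oriented resolution of all crossings, and let $s_+$, $s_-$ be the numbers of counterclockwise and clockwise circles in it; by definition $\mathcal{C}(D,c)=t^{\frac12(s_+-s_-)}$. The structural fact I would establish first is that the oriented resolution preserves the total turning number $\frac{1}{2\pi}\oint d\theta$: at each crossing, replacing the transverse double point by its oriented smoothing turns one outgoing arc to the left and the other to the right by equal amounts, so the net local turning is unchanged. Since each resulting circle is embedded and hence has rotation number $\pm 1$ according to its orientation, it follows that $s_+-s_-$ equals the total rotation number $\rho$ of the immersed multicurve $L_{(D,c)}$, and $\mathcal{C}(D,c)=t^{\rho/2}$. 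This is exactly the ``total winding number of $D$ weighted by $c$'' alluded to before the statement, and it is the viewpoint that makes the relations evident.

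Granting this interpretation, each assertion becomes a local turning computation, and because $\frac{1}{2\pi}\oint d\theta$ is additive over the diagram it suffices to compare the two pictures inside the disk where the move occurs, the boundary tangent directions agreeing on the two sides. For the displayed local relation I would insert $i$ parallel strands into each picture and check directly that the left and right configurations carry total turnings differing by $i$, equivalently that their oriented resolutions differ by $i$ counterclockwise circles; this produces the factor $t^{i/2}$.

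For the corollary I argue as follows. Invariance under moves (II) and (III) holds because, after forming $L_{(D,c)}$ and resolving, the circle systems $\widehat{L}$ before and after such a move are related by an ambient isotopy of the plane together with applications of the displayed relation, each of which preserves $s_+-s_-$; in turning-number language this is the classical statement that the rotation number is a regular-isotopy invariant. The behavior under move (I) is the complementary classical fact: a curl inserts one extra small circle into $\widehat{L}$ per strand, changing $s_+-s_-$ by $\pm i$ according to the handedness of the curl, and the prefactors $t^{\pm i/2}$ in the four displayed equations restore the common value $\mathcal{C}$ of the uncurled strand.

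The step I expect to be the main obstacle is the orientation-sensitive bookkeeping in the displayed local relation: the left-hand picture contains oriented turnbacks (local extrema), so the comparison of turning on the two sides depends on the counterclockwise-versus-clockwise contribution of each extremum, and I would need to track these contributions precisely to be sure the net difference is $+i$ rather than $-i$ or $0$. A secondary technical point, needed only if one insists on the clean global identity $\mathcal{C}(D,c)=t^{\rho/2}$ rather than a purely local circle count, is to verify that turning is correctly accounted for at the trivalent vertices under the reconnection rule of Fig.~\ref{fig:e24}, so that the color weighting emerges with the right coefficients. Once these signs are fixed, the remaining verifications are routine.
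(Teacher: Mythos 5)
Your rotation-number (Whitney degree) reading of $\mathcal{C}(D,c)$ is exactly the route the paper takes: its proof of the first relation is a one-line appeal to Kauffman's Lemma 6.4 and Proposition 6.5 of \cite{MR712133}, which are precisely the statements that the curliness is the exponentiated total turning of the resolved diagram and that this quantity is a regular-isotopy invariant, with the move (I) identities left to direct computation. So in outline your argument is the paper's.

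One concrete correction at the step you yourself flag as the obstacle: in the displayed local relation the boundary tangent directions do \emph{not} agree between the two tangles --- they differ by a quarter turn at each of the four endpoints --- so the two pictures cannot be compared by raw local turning alone. Computing the turnings as drawn gives $-\pi$ per arc on the left picture and $+\pi$ per arc on the right, a discrepancy of two full turns per strand rather than the single full turn the exponent $t^{i/2}$ requires; the extra full turn is absorbed by the exterior once the boundary tangents are normalized. The safe way to run the check is your alternative formulation: close both tangles by the same exterior arcs, resolve, and count circles (for $i=1$ the left closes to two clockwise circles and the right to one, so the counts differ by one circle per strand, as needed). Note also that your convention $\mathcal{C}=t^{(s_+-s_-)/2}$ with $s_+$ the counterclockwise count appears to be opposite to the paper's: in (\ref{def:colredcurliness}) the circle carrying the plus sign is the clockwise one, so $\mathcal{C}=t^{-\rho/2}$ in terms of the usual rotation number, and this flip is exactly what makes the exponent come out as $+i/2$ rather than $-i/2$. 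With that normalization fixed, the remaining items --- invariance of $\rho$ under (II) and (III), the $\pm i$ shift per curl under (I), and the bookkeeping at vertices via Fig.~\ref{fig:e24} --- are routine, as you say.
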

\begin{proof}
One can check the relations by direct computations.
\end{proof}

\medskip

\begin{defn}\label{def:normalizedAlexanderpolynomial}
\rm
For a framed trivalent MOY graph diagram $(\mathbb{D},c)$, define the {\it normalized Alexander polynomial} by
\begin{equation}
\Delta_{(\mathbb{D}, c)}(t):=\frac{ \langle D,c\rangle}{(t^{-1/2}-t^{1/2})^{\vert V \vert-1}}\cdot \mathcal{F}(\mathbb{D},c)\mathcal{C}(D, c).
\end{equation}
\end{defn}

\medskip

Now we prove Theorem \ref{main1}.
\begin{theo}[Theorem 1.1]
\label{framed}
The function $\Delta_{(\mathbb{D}, c)}(t)$ is invariant under the Reidemeister moves in Fig. \ref{fig:e14}. Therefore, it is a topological invariant of the corresponding framed trivalent graph $(\mathbb{G},c)$.
\end{theo}

\begin{proof}
It is straightforward to check that $\Delta_{(\mathbb{D}, c)}(t)$ is invariant under move (0).

For move (I), we prove the move in the following figure for illustration, where $\mathbb{D}$ denotes the left-hand side diagram and $\mathbb{D}'$ denotes the right-hand side diagram.  One can check that $\mathcal{F}(\mathbb{D},c)=t^{-i/2}\mathcal{F}(\mathbb{D}',c)$ and $\mathcal{C}(D, c)=t^{-i/2}\mathcal{C}(D', c)$, while Proposition \ref{invtheo} implies that the state sums satisfy $\langle D, c\rangle=t^{i}\langle D', c\rangle$. Therefore $\Delta_{(\mathbb{D}, c)}(t)=\Delta_{(\mathbb{D}', c)}(t)$. The proof for the other cases of move (I) are similar. However, note that for the other cases of move (I), $\mathcal{F}(\mathbb{D},c)$ and $\mathcal{C}(D, c)$ do not necessarily change in the same way.

\vspace{2mm}
\noindent
\begin{minipage}{\linewidth}
\makebox[\linewidth]
{\includegraphics[keepaspectratio=true,scale=0.4]{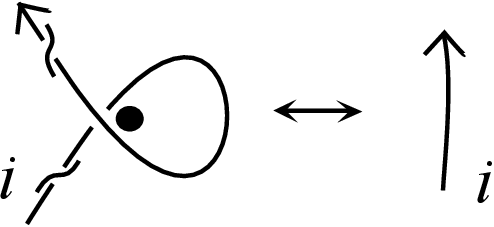}}
\end{minipage}
\vspace{2mm}

From Lemma \ref{curli}, we see that $\mathcal{F}(\mathbb{D},c)$ and $\mathcal{C}(D, c)$ are invariant under moves (II) (III) and (IV). Proposition \ref{invtheo} implies the same fact for $\langle D, c\rangle$. Therefore, $\Delta_{(\mathbb{D}, c)}(t)$ is also invariant under moves (II) (III) and (IV).

For move (V), we divide the discussion into two cases depending on the orientation of the edges. The first case is when the two edges where the twist occurs point both inward or outward. This belongs to move (V) in Fig. \ref{fig:e25}. Proposition \ref{invtheo} implies that $\langle D,c\rangle$ is invariant, and we can check from the definition that $\mathcal{F}(\mathbb{D},c)$ and $\mathcal{C}(D, c)$ are also invariant. As a result, $\Delta_{(\mathbb{D}, c)}(t)$ is invariant.

The second case is when one edge around the twist points inward and the other edge points outward. As this move can be realized as a composition of the moves discussed above, the invariance of $\Delta_{(\mathbb{D}, c)}$ follows. See the following figure.

\vspace{2mm}
\noindent
\begin{minipage}{\linewidth}
\makebox[\linewidth]
{\includegraphics[keepaspectratio=true,scale=0.8]{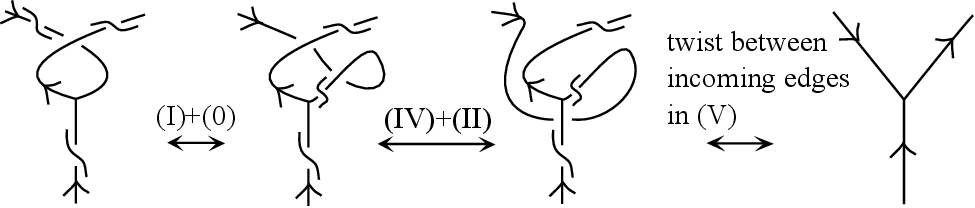}}
\end{minipage}
\vspace{2mm}

\end{proof}

Thus, we define the normalized Alexander polynomial of framed trivalent graph $(\mathbb{G},c)$ by
\begin{equation}\label{framedAlexander}
\Delta_{(\mathbb{G},c)}(t):=\Delta_{(\mathbb{D},c)}(t)=\frac{ \langle D,c\rangle}{(t^{-1/2}-t^{1/2})^{\vert V \vert-1}}\cdot \mathcal{F}(\mathbb{D},c)\mathcal{C}(D, c),
\end{equation}
where $\mathbb{D}$ is an arbitrary diagram that represents $\mathbb{G}$.

\begin{rem}
\rm
In summary, for an MOY graph $G$ without framing, we have only an Alexander polynomial $\Delta_{(G,c)}(t)$ that is well-defined up to $t^k$. When we consider a framed trivalent graph $\mathbb{G}$, we obtain a normalized Alexander polynomial $\Delta_{(\mathbb{G},c)}(t)$ without $t^k$-ambiguity.
We will use both $\Delta_{(G,c)}(t)$ and $\Delta_{(\mathbb{G},c)}(t)$
in subsequent sections for different purposes.
\end{rem}


\section{MOY-type relations}
\label{moysection}

In this section, we prove Theorem \ref{main2}. This result originates from Murakami-Ohtsuki-Yamada's graphic calculus \cite{MR1659228} for $U_q(\mathfrak{sl}_n)$-polynomial invariants. Here we give a version of the theory for $\Delta_{(\mathbb{G}, c)}(t)$.

\subsection{MOY-type relations.}
\begin{theo}
\label{moytheo}
Suppose $\mathbb{D}$ is a framed trivalent graph diagram with a positive coloring $c$.
The normalized Alexander polynomial $\Delta_{(\mathbb{D}, c)}(t)$ satisfies the following relations, where $(\mathbb{D})$ represents $\Delta_{(\mathbb{D}, c)}(t)$,and $[k]=\displaystyle \frac{t^{k/2}-t^{-k/2}}{t^{1/2}-t^{-1/2}}$ for $k\in \mathbb{Z}$.
\begin{align*}\label{moy}
 (i) \quad
&\left(\begin{tikzpicture}[baseline=-0.65ex, thick, scale=0.6]
\draw[
        decoration={markings, mark=at position 0.625 with {\arrow{<}}},
        postaction={decorate}
        ]
(0,0) circle (1);
\draw (1.3,0) node {$i$};
\end{tikzpicture}\right)
=\left(\begin{tikzpicture}[baseline=-0.65ex, thick, scale=0.6]
\draw[
        decoration={markings, mark=at position 0.625 with {\arrow{>}}},
        postaction={decorate}
       ]
(0,0) circle (1);
\draw (1.3,0) node {$i$};
\end{tikzpicture}\right)
=\frac{1}{[i]}, \text{ for $i>0$}.\\
(ii) \quad
 & \text{$(\mathbb{D})=0$ if $\mathbb{D}$ is a disconnected graph.}\\
(iii) \quad
&\left(\begin{tikzpicture}[baseline=-0.65ex, thick, scale=0.6]
\draw (0, -1) -- (0, 0);
\draw (0, 0.3)  -- (0, 1) node[above]{$i$};
\draw (0.2, 0.5) -- (0.2, 0.2) -- (-0.2, 0.1) -- (-0.2, -0.2);
\end{tikzpicture}\right)
= t^{-i/4} \cdot \left(\begin{tikzpicture}[baseline=-0.65ex, thick, scale=0.5]
\draw (0, -1)  -- (0, 1) node[above]{$i$};
\end{tikzpicture}\right), \quad
\left(\begin{tikzpicture}[baseline=-0.65ex, thick, scale=0.6]
\draw (0, -1) -- (0, 0);
\draw (0, 0.3)  -- (0, 1) node[above]{$i$};
\draw (-0.2, 0.5) -- (-0.2, 0.2) -- (0.2, 0.1) -- (0.2, -0.2);
\end{tikzpicture}\right)
= t^{i/4} \cdot \left(\begin{tikzpicture}[baseline=-0.65ex, thick, scale=0.5]
\draw (0, -1)  -- (0, 1) node[above]{$i$};
\end{tikzpicture}\right).\\
(iv) \quad &\text{When $i\leq j$:}\\
\quad
&\left(\begin{tikzpicture}[baseline=-0.65ex, thick, scale=0.6]
\draw (1,-1)   -- (0.2,-0.2);
\draw (-1, -1) [->] -- (1, 1) node[above]{$i$};
\draw (-0.2,0.2) [->] to (-1,1)  node[above]{$j$};
\end{tikzpicture}\right)
  =\frac{-t^{-\frac{i+j}{2}}}{[i]\,[j]}\cdot
\left(\begin{tikzpicture}[baseline=-0.65ex, thick, scale=1.2]
\draw (0,-1) [->-] to (0, 0.33);
\draw (0, 0.33) [->] to (0,1);
\draw (1,-1) [->-] to (1, -0.33);
\draw (1, -0.33) [->] to (1,1);
\draw (0,0.33) [-<-]  to [out=270,in=180] (0.5,0) to [out=0,in=90] (1,-0.33);
\draw (0,1.25) node {$j$};
\draw (0,-1.25) node {$i$};
\draw (1,1.25) node {$i$};
\draw (0.9,-1.25) node {$j$};
\draw (0.5,0.3) node {$j-i$};
\draw (1, -0.33) node[circle,fill,inner sep=1pt]{};
\draw (0, 0.33) node[circle,fill,inner sep=1pt]{};
\end{tikzpicture}\right)
+ \, \frac{t^{-\frac{j}{2}}}{[i]\,[i+j]}\cdot
\left(
\begin{tikzpicture}[baseline=-0.65ex, thick, scale=1.2]
\draw (0,-1) [->-] to [out=90,in=270] (0.5,-0.33);
\draw (0.5, -0.33) [->-] to [out=90,in=270] (0.5,0.33);
\draw (0.5, 0.33) [->] to [out=90,in=270] (0,1);
\draw (1,-1) [->-] to [out=90,in=270] (0.5,-0.33);
\draw (1,1) [<-] to [out=270,in=90] (0.5,0.33);
\draw (0, -1.25) node {$i$};
\draw (0.9,-1.25) node {$j$};
\draw (0,1.25) node {$j$};
\draw (1,1.25) node {$i$};
\draw (1,0) node {$i+j$};
\draw (0.5, -0.33) node[circle,fill,inner sep=1pt]{};
\draw (0.5, 0.33) node[circle,fill,inner sep=1pt]{};
\end{tikzpicture}\right),\\
&\left(\begin{tikzpicture}[baseline=-0.65ex, thick, scale=0.6]
\draw (1,-1) [->]  -- (-1,1)  node[above]{$j$};
\draw (-1, -1) to (-0.2, -0.2);
\draw (0.2, 0.2) [->] -- (1, 1) node[above]{$i$};
\end{tikzpicture}\right)
  =\frac{-t^{\frac{i+j}{2}}}{[i]\,[j]}\cdot
\left(\begin{tikzpicture}[baseline=-0.65ex, thick, scale=1.2]
\draw (0,-1) [->-] to (0, 0.33);
\draw (0, 0.33) [->] to (0,1);
\draw (1,-1) [->-] to (1, -0.33);
\draw (1, -0.33) [->] to (1,1);
\draw (0,0.33) [-<-]  to [out=270,in=180] (0.5,0) to [out=0,in=90] (1,-0.33);
\draw (0,1.25) node {$j$};
\draw (0,-1.25) node {$i$};
\draw (1,1.25) node {$i$};
\draw (0.9,-1.25) node {$j$};
\draw (0.5,0.3) node {$j-i$};
\draw (1, -0.33) node[circle,fill,inner sep=1pt]{};
\draw (0, 0.33) node[circle,fill,inner sep=1pt]{};
\end{tikzpicture}\right)
+ \, \frac{t^{\frac{j}{2}}}{[i]\,[i+j]}\cdot
\left(
\begin{tikzpicture}[baseline=-0.65ex, thick, scale=1.2]
\draw (0,-1) [->-] to [out=90,in=270] (0.5,-0.33);
\draw (0.5, -0.33) [->-] to [out=90,in=270] (0.5,0.33);
\draw (0.5, 0.33) [->] to [out=90,in=270] (0,1);
\draw (1,-1) [->-] to [out=90,in=270] (0.5,-0.33);
\draw (1,1) [<-] to [out=270,in=90] (0.5,0.33);
\draw (0, -1.25) node {$i$};
\draw (0.9,-1.25) node {$j$};
\draw (0,1.25) node {$j$};
\draw (1,1.25) node {$i$};
\draw (1,0) node {$i+j$};
\draw (0.5, -0.33) node[circle,fill,inner sep=1pt]{};
\draw (0.5, 0.33) node[circle,fill,inner sep=1pt]{};
\end{tikzpicture}\right).\\
\quad &\text{When $j\leq i$:}\\
\quad
&\left(\begin{tikzpicture}[baseline=-0.65ex, thick, scale=0.6]
\draw (1,-1)   -- (0.2,-0.2);
\draw (-1, -1) [->] -- (1, 1) node[above]{$i$};
\draw (-0.2,0.2) [->] to (-1,1)  node[above]{$j$};
\end{tikzpicture}\right)
  =\frac{-t^{-\frac{i+j}{2}}}{[i]\,[j]}\cdot
\left(\begin{tikzpicture}[baseline=-0.65ex, thick, scale=1.2]
\draw (0,-1) [->-] to (0, -0.33);
\draw (0, -0.33) [->] to (0,1);
\draw (1,-1) [->-] to (1, 0.33);
\draw (1, 0.33) [->] to (1,1);
\draw (0,-0.33) [->-]  to [out=90,in=180] (0.5,0) to [out=0,in=270] (1,0.33);
\draw (0,1.25) node {$j$};
\draw (0,-1.25) node {$i$};
\draw (1,1.25) node {$i$};
\draw (0.9,-1.25) node {$j$};
\draw (0.5,0.3) node {$i-j$};
\draw (1, 0.33) node[circle,fill,inner sep=1pt]{};
\draw (0, -0.33) node[circle,fill,inner sep=1pt]{};
\end{tikzpicture}\right)
+ \, \frac{t^{-\frac{i}{2}}}{[j]\,[i+j]}\cdot
\left(
\begin{tikzpicture}[baseline=-0.65ex, thick, scale=1.2]
\draw (0,-1) [->-] to [out=90,in=270] (0.5,-0.33);
\draw (0.5, -0.33) [->-] to [out=90,in=270] (0.5,0.33);
\draw (0.5, 0.33) [->] to [out=90,in=270] (0,1);
\draw (1,-1) [->-] to [out=90,in=270] (0.5,-0.33);
\draw (1,1) [<-] to [out=270,in=90] (0.5,0.33);
\draw (0, -1.25) node {$i$};
\draw (0.9,-1.25) node {$j$};
\draw (0,1.25) node {$j$};
\draw (1,1.25) node {$i$};
\draw (1,0) node {$i+j$};
\draw (0.5, -0.33) node[circle,fill,inner sep=1pt]{};
\draw (0.5, 0.33) node[circle,fill,inner sep=1pt]{};
\end{tikzpicture}\right),\\
&\left(\begin{tikzpicture}[baseline=-0.65ex, thick, scale=0.6]
\draw (1,-1) [->]  -- (-1,1)  node[above]{$j$};
\draw (-1, -1) to (-0.2, -0.2);
\draw (0.2, 0.2) [->] -- (1, 1) node[above]{$i$};
\end{tikzpicture}\right)
  =\frac{-t^{\frac{i+j}{2}}}{[i]\,[j]}\cdot
\left(\begin{tikzpicture}[baseline=-0.65ex, thick, scale=1.2]
\draw (0,-1) [->-] to (0, -0.33);
\draw (0, -0.33) [->] to (0,1);
\draw (1,-1) [->-] to (1, 0.33);
\draw (1, 0.33) [->] to (1,1);
\draw (0,-0.33) [->-]  to [out=90,in=180] (0.5,0) to [out=0,in=270] (1,0.33);
\draw (0,1.25) node {$j$};
\draw (0,-1.25) node {$i$};
\draw (1,1.25) node {$i$};
\draw (0.9,-1.25) node {$j$};
\draw (0.5,0.3) node {$i-j$};
\draw (1, 0.33) node[circle,fill,inner sep=1pt]{};
\draw (0, -0.33) node[circle,fill,inner sep=1pt]{};
\end{tikzpicture}\right)
+ \, \frac{t^{\frac{i}{2}}}{[j]\,[i+j]}\cdot
\left(
\begin{tikzpicture}[baseline=-0.65ex, thick, scale=1.2]
\draw (0,-1) [->-] to [out=90,in=270] (0.5,-0.33);
\draw (0.5, -0.33) [->-] to [out=90,in=270] (0.5,0.33);
\draw (0.5, 0.33) [->] to [out=90,in=270] (0,1);
\draw (1,-1) [->-] to [out=90,in=270] (0.5,-0.33);
\draw (1,1) [<-] to [out=270,in=90] (0.5,0.33);
\draw (0, -1.25) node {$i$};
\draw (0.9,-1.25) node {$j$};
\draw (0,1.25) node {$j$};
\draw (1,1.25) node {$i$};
\draw (1,0) node {$i+j$};
\draw (0.5, -0.33) node[circle,fill,inner sep=1pt]{};
\draw (0.5, 0.33) node[circle,fill,inner sep=1pt]{};
\end{tikzpicture}\right).\\
(v)
\quad
& \left(\begin{tikzpicture}[baseline=-0.65ex,thick, scale=0.7]
\draw (0,-1.5) [->-] to (0,-0.5);
\draw (0,-0.5) [->-] to (0,0.5);
\draw (0,-0.5) [->] to (0,1.5);
\draw (0, -0.5) node[circle,fill,inner sep=1pt]{};
\draw (0, 0.5) node[circle,fill,inner sep=1pt]{};
\draw (0,-0.5) to [out=270, in=270] (1,-0.2) [-<-] to [out=90, in=270] (1,0.2) to [out=90, in=90] (0,0.5);
\draw (-0.3, -1) node {$j$};
\draw (-0.3, 1) node {$j$};
\draw (-0.9, 0) node {$i+j$};
\draw (1.3,0) node {$i$};
\end{tikzpicture}\right) =
\left(\begin{tikzpicture}[baseline=-0.65ex,thick, scale=0.7]
\draw (0,-1.5) [->-] to (0,-0.5);
\draw (0,-0.5) [->-] to (0,0.5);
\draw (0,-0.5) [->] to (0,1.5);
\draw (0, -0.5) node[circle,fill,inner sep=1pt]{};
\draw (0, 0.5) node[circle,fill,inner sep=1pt]{};
\draw (0,-0.5) to [out=270, in=270] (-1,-0.2) [-<-] to [out=90, in=270] (-1,0.2) to [out=90, in=90] (0,0.5);
\draw (0.3, -1) node {$j$};
\draw (0.3, 1) node {$j$};
\draw (0.9, 0) node {$i+j$};
\draw (-1.3,0) node {$i$};
\end{tikzpicture}\right)
=[j]\,[i+j]\cdot
\left(\begin{tikzpicture}[baseline=-0.65ex,thick, scale=0.7]
\draw (0,-1.5) [->] to (0,1.5);
\draw (-0.25,0) node {$j$};
\draw (0.25,0) node {};
\end{tikzpicture}\right). \\
(vi) \quad
&\left(\begin{tikzpicture}[baseline=-0.65ex,thick,scale=0.6]
\draw [->-] (0.5,-1.5) node[below]{$i$}-- (0.5,-0.6);
\draw[->-] (0.5, -0.6) to [out=90, in=270] (0,0) to [out=90,in=270] (0.5,0.6) ;
\draw (0.5, -0.6) [->-] to [out=90, in=270] (1,0) to [out=90,in=270] (0.5,0.6);
\draw [->](0.5,0.6) -- (0.5,1.5) node[above]{$i$};
\draw (0.5,-0.6) node[circle,fill,inner sep=1pt]{};
\draw (0.5,0.6) node[circle,fill,inner sep=1pt]{};
\draw (-1, 0) node {$i-j$};
\draw (1.5, 0) node {$j$};
\end{tikzpicture}\right) = [i]^2\cdot
\left(\begin{tikzpicture}[baseline=-0.65ex,thick,scale=0.6]
\draw [->](0,-1.5) -- (0, 1.5);
\draw (-0.5 , 0) node {$i$};
\draw (0.25, 0) node {};
\end{tikzpicture}\right), \text{ where $i\geq j$.}\\
(vii)
\quad
& \left(\begin{tikzpicture}[baseline=-0.65ex, thick, scale=1.2]
\draw (0,-1) [->-] to [out=90,in=180] (0.5,-0.5);
\draw (0.5, -0.5) [->-] to [out=0,in=180] (1,-0.5) ;
\draw (1, -0.5) [->] to [out=0, in=90] (1.5,-1);
\draw (0.5,-0.5) [-<-] to [out=180, in=270] (0,0) to [out=90,in=180] (0.5,0.5);
\draw (1,-0.5) [->-] to [out=0, in=270] (1.5,0) to [out=90,in=0] (1,0.5);
\draw (0,1) [<-] to [out=270,in=180] (0.5,0.5);
\draw (0.5, 0.5) [-<-] to [out=0,in=180] (1,0.5);
\draw (1, 0.5) [-<-]  to [out=0, in=270] (1.5,1);
\draw (-0.25, -0.75) node {$j$};
\draw (1.75, -0.75) node {$i$};
\draw (-0.25, 0) node {$i$};
\draw (-0.25, 0.75) node {$j$};
\draw (1.75, 0.75) node {$i$};
\draw (1.75, 0) node {$j$};
\draw (0.75,-0.75) node {$i+j$};
\draw (0.75,0.75) node {$i+j$};
\draw (0.5, -0.5) node[circle,fill,inner sep=1pt]{};
\draw (1, -0.5) node[circle,fill,inner sep=1pt]{};
\draw (0.5, 0.5) node[circle,fill,inner sep=1pt]{};
\draw (1, 0.5) node[circle,fill,inner sep=1pt]{};
\end{tikzpicture}\right)
=\frac{[i+j]^3}{[i]}\cdot
\left(\begin{tikzpicture}[baseline=-0.65ex, thick, scale=1.2]
\draw (0,-1) [->-] to [out=90, in=180] (0.5,-0.5);
\draw (0.5, -0.5)  to [out=0,in=180] (1,-0.5) [->]  to [out=0,in=90] (1.5,-1);
\draw (0.5, 0.5) [out=180, in=270]   [->] to (0,1);
\draw (0.5,0.5) to [out=0,in=180] (1,0.5) [-<-] to [out=0,in=270] (1.5,1);
\draw (0.75,-0.5) to [out=180, in=270] (0.25,0) [-<-] to [out=90,in=180] (0.75,0.5);
\draw (-0.25, -0.75) node {$j$};
\draw (1.75, -0.75) node {$i$};
\draw (1, 0) node {$i-j$};
\draw (-0.25, 0.75) node {$j$};
\draw (1.75, 0.75) node {$i$};
\draw (0.75, -0.5) node[circle,fill,inner sep=1pt]{};
\draw (0.75, 0.5) node[circle,fill,inner sep=1pt]{};
\end{tikzpicture}\right)\\
\quad & \hspace{4cm}
+[j]^2\,[i+j]^2\cdot
\left(\begin{tikzpicture}[baseline=-0.65ex, thick, scale=1.2]
\draw (0,-1) [->] to (0,1);
\draw (1,-1) [<-] to (1,1);
\draw (-0.25,0) node {$j$};
\draw (1.25, 0) node {$i$};
\end{tikzpicture}\right), \text{ where $i\geq j$.} \\
(viii)
\quad
& \left(\begin{tikzpicture}[baseline=-0.65ex,thick, scale=0.9]
\draw (0,-1.5) [->-] to (0,-0.5);
\draw (0,-0.5) [->-] to [out=90,in=270] (-1,0.5);
\draw (-1,0.5) [->] to [out=90,in=270] (-2,1.5);
\draw (-1,0.5) [->] to [out=90, in=270] (0,1.5);
\draw (0,-0.5) [->] to [out=90,in=270] (1,0.5) to [out=90,in=270] (1,1.5);
\draw (1.5,-1) node {$i+j+k$};
\draw (1.3, 1.25) node {$k$};
\draw (-1.7,0) node {$i+j$};
\draw (-2.25, 1.25) node {$i$};
\draw (-0.5,1.25) node {$j$};
\draw (0, -0.5) node[circle,fill,inner sep=1pt]{};
\draw (-1, 0.5) node[circle,fill,inner sep=1pt]{};
\end{tikzpicture}\right)=\frac{ [i+j] }  {[j+k]}\cdot
\left(\begin{tikzpicture}[baseline=-0.65ex,thick, scale=0.9]
\draw (0,-1.5) [->-]-- (0,-0.5);
\draw (0,-0.5)  to [out=90,in=270] (-1,0.5) [->] to [out=90,in=270] (-1,1.5);
\draw (1,0.5) [->] to [out=90, in=270] (2,1.5);
\draw (0,-0.5) [->-] to [out=90,in=270] (1,0.5);
\draw (1, 0.5) [->] to [out=90,in=270] (0,1.5);
\draw (1.5,-1) node {$i+j+k$};
\draw (2.25, 1.3) node {$k$};
\draw (1.6,0) node {$j+k$};
\draw (-1.3, 1.25) node {$i$};
\draw (0.5,1.25) node {$j$};
\draw (0, -0.5) node[circle,fill,inner sep=1pt]{};
\draw (1, 0.5) node[circle,fill,inner sep=1pt]{};
\end{tikzpicture}\right), \\
\quad
& \left(\begin{tikzpicture}[baseline=-0.65ex,thick, scale=0.9]
\draw (1,0.5) [->]-- (1,1.5);
\draw (2,-1.5) to [out=90,in=270] (2,-0.5) [->-] to [out=90,in=270] (1,0.5);
\draw (0,-0.5) [->-] to [out=90, in=270] (1,0.5);
\draw (-1,-1.5) [->-] to [out=90,in=270] (0,-0.5);
\draw (1, -1.5) [->-] to [out=90,in=270] (0,-0.5);
\draw (2.5,1) node {$i+j+k$};
\draw (2.25, -1.3) node {$k$};
\draw (-0.6,0) node {$i+j$};
\draw (-1.3, -1.25) node {$i$};
\draw (1.3,-1.25) node {$j$};
\draw (1, 0.5) node[circle,fill,inner sep=1pt]{};
\draw (0, -0.5) node[circle,fill,inner sep=1pt]{};
\end{tikzpicture}\right)=\frac{ [i+j] }  {[j+k]}\cdot
\left(\begin{tikzpicture}[baseline=-0.65ex,thick, scale=0.9]
\draw (0,0.5) [->]-- (0,1.5);
\draw (-1,-1.5) to [out=90,in=270] (-1,-0.5) [->-] to [out=90,in=270] (0,0.5);
\draw (1,-0.5) [->-] to [out=90, in=270] (0,0.5);
\draw (0,-1.5) [->-] to [out=90,in=270] (1,-0.5);
\draw (2, -1.5) [->-] to [out=90,in=270] (1,-0.5);
\draw (1.5,1) node {$i+j+k$};
\draw (2.25, -1.3) node {$k$};
\draw (1.6,0) node {$j+k$};
\draw (-1.3, -1.25) node {$i$};
\draw (0.6,-1.25) node {$j$};
\draw (0, 0.5) node[circle,fill,inner sep=1pt]{};
\draw (1, -0.5) node[circle,fill,inner sep=1pt]{};
\end{tikzpicture}\right).\\
(ix)
\quad
& \left(\begin{tikzpicture}[baseline=-0.65ex, thick, scale=1.2]
\draw (0,-1) [->-] to (0, -0.33);
\draw (0, -0.33) [->-] to (0, 0.33);
\draw (0, 0.33) [->] to (0,1);
\draw (2,-1) [->-] to (2, -0.66);
\draw (2, -0.66) [->-] to (2, 0.66);
\draw (2, 0.66) [->] to (2,1);
\draw (0,-0.33) to [out=270,in=180] (0.5,-0.5) [-<-] to [out=0,in=180] (1.5,-0.5) to [out=0, in=90] (2,-0.66);
\draw (0,0.33) to [out=90, in=180] (0.5,0.5) [->-] to [out=0,in=180] (1.5,0.5) to [out=0, in=270] (2,0.66);
\draw (0,-1.25) node {$i$};
\draw (2,-1.25) node {$j$};
\draw (0.5,0) node {$i+k$};
\draw (0.1,1.25) node {$i+k-l$};
\draw (1, -0.8) node {$k$};
\draw (1.5,0) node {$j-k$};
\draw (1, 0.8) node {$l$};
\draw (2,1.25) node {$j+l-k$};
\draw (0, -0.33) node[circle,fill,inner sep=1pt]{};
\draw (0, 0.33) node[circle,fill,inner sep=1pt]{};
\draw (2, -0.66) node[circle,fill,inner sep=1pt]{};
\draw (2, 0.66) node[circle,fill,inner sep=1pt]{};
\end{tikzpicture}\right)
= \frac{ [j]\,[l]\,[i+k]}{ [i+j]}\cdot
\left(
\begin{tikzpicture}[baseline=-0.65ex, thick, scale=1.2]
\draw (0,-1) [->-] to [out=90,in=270] (0.5,-0.33);
\draw (0.5, -0.33) [->-] to [out=90,in=270] (0.5,0.33);
\draw (0.5, 0.33) [->] to [out=90,in=270] (0,1);
\draw (1,-1) [->-] to [out=90,in=270] (0.5,-0.33);
\draw (1,1) [<-] to [out=270,in=90] (0.5,0.33);
\draw (0, -1.25) node {$i$};
\draw (0.9,-1.25) node {$j$};
\draw (-0.5,1.25) node {$i+k-l$};
\draw (1.5,1.25) node {$j+l-k$};
\draw (1.2,0) node {$i+j$};
\draw (0.5, -0.33) node[circle,fill,inner sep=1pt]{};
\draw (0.5, 0.33) node[circle,fill,inner sep=1pt]{};
\end{tikzpicture}\right)\\
\quad & \hspace{1cm} + [i+k]\,[j-k]\cdot
\left(\begin{tikzpicture}[baseline=-0.65ex, thick, scale=1.2]
\draw (0,-1) [->-] to (0, 0.33);
\draw (0, 0.33) [->] to (0,1);
\draw (1,-1) [->-] to (1, -0.33);
\draw (1, -0.33) [->] to (1,1);
\draw (0,0.33) [-<-]  to [out=270,in=180] (0.5,0) to [out=0,in=90] (1,-0.33);
\draw (-0.5,1.25) node {$i+k-l$};
\draw (0,-1.25) node {$i$};
\draw (1.5,1.25) node {$j+l-k$};
\draw (1,-1.25) node {$j$};
\draw (0.5,0.3) node {$k-l$};
\draw (1, -0.33) node[circle,fill,inner sep=1pt]{};
\draw (0, 0.33) node[circle,fill,inner sep=1pt]{};
\end{tikzpicture}\right), \text{ where $j\geq k \geq l$.}\\
(x)
\quad
&\left(\begin{tikzpicture}[baseline=-0.65ex, thick, scale=1.2]
\draw (0,-1)  to (0, 0.33);
\draw (0, 0.33)  to (0,1);
\draw (1,-1)  to (1, -0.33);
\draw (1, -0.33)  to (1,1);
\draw (0,0.33)  to (1,-0.33);
\draw (0,1.25) node {$i$};
\draw (0,-1.25) node {$i$};
\draw (1,1.25) node {$j$};
\draw (1,-1.25) node {$j$};
\draw (0.5,0.3) node {$0$};
\draw (1, -0.33) node[circle,fill,inner sep=1pt]{};
\draw (0, 0.33) node[circle,fill,inner sep=1pt]{};
\end{tikzpicture}\right)
=[i]\,[j] \cdot
\left(\begin{tikzpicture}[baseline=-0.65ex, thick, scale=1.2]
\draw (0,-1)  to (0,0.5);
\draw (1,-1)  to (1,0.5);
\draw (0,0.75) node {$i$};
\draw (1,0.75) node {$j$};
\end{tikzpicture}\right).\\
\end{align*}
\end{theo}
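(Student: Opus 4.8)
The plan is to verify each of the ten relations by direct computation from the state sum (\ref{alexander}) together with the normalization in Definition \ref{def:normalizedAlexanderpolynomial}, exploiting the fact that every relation is \emph{local}: the diagrams on the two sides of each identity coincide outside a small disk $\mathcal{D}$ and differ only by the tangle inside $\mathcal{D}$. By Proposition \ref{initial} the quantity $|\delta|^{-1}\langle D|\delta\rangle$ is independent of the location of $\delta$, so in each case I would place the initial point on an edge lying outside $\mathcal{D}$. The set of Kauffman states then splits according to the assignment of corners to the crossings and regions inside $\mathcal{D}$, and the state sum factors as a common global contribution times a local sum over the finitely many admissible local assignments. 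Computing this local sum, reading off the resulting Laurent polynomial, and then accounting for the change in the auxiliary factors $\mathcal{F}(D)$, $\mathcal{C}(D,c)$, $|\delta|$ and $(t^{1/2}-t^{-1/2})^{|V|-1}$ between the two sides yields each identity.

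Relations (i)--(iii) I would dispose of first. For (i) the loop colored $i$ has no crossings, so there is a single empty state contributing $1$ to $\langle D|\delta\rangle$; since $|V|=0$ the normalization reduces to $\mathcal{C}(D,c)(t^{1/2}-t^{-1/2})/|\delta|$, and substituting $\mathcal{C}(D,c)=t^{i/2}$ and $|\delta|=t^{i/2}(t^{i/2}-t^{-i/2})$ (up to a factor $\pm t^{k}$) gives exactly $1/[i]$. Relation (ii) is immediate, since by the convention in Section~2.2 the state sum vanishes on a disconnected diagram. Relation (iii) is merely the definition of the framing factor, as a positive or negative half-twist on an edge of color $i$ multiplies $\mathcal{F}(D)$ by $t^{\pm i/4}$ while leaving the remaining data unchanged.

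The core of the theorem is the crossing-resolution identity (iv), which expresses a genuine crossing of type \diaCrossP or \diaCrossN between strands of colors $i$ and $j$ as a linear combination of its two planar resolutions. Here I would enumerate the corner assignments at the crossing, using the local weights of Fig.~\ref{fig:e4} and Fig.~\ref{fig:e1}, together with the assignments at the trivalent vertices and circle regions introduced in each resolution. As in the proof of Theorem \ref{invtheo}, states sending a crossing to an ``inadmissible'' corner occur in cancelling white/black pairs, so only a few local assignments survive; their weighted sum produces the coefficients $-t^{\pm(i+j)/2}/([i][j])$ and $t^{\pm j/2}/([i][i+j])$ (and the symmetric expressions when $j\le i$). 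The two displayed relations invoked in the proof of Proposition \ref{initial} are the special cases $j>i>0$, so this step also retroactively justifies that argument. The delicate point is the bookkeeping: each resolution introduces two trivalent vertices, so $|V|$ increases by two and the factor $(t^{1/2}-t^{-1/2})^{|V|-1}$ changes accordingly, and it is precisely this change that converts the raw state-sum ratios into the normalized quantities $[k]=(t^{k/2}-t^{-k/2})/(t^{1/2}-t^{-1/2})$.

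Finally, the planar identities (v)--(x) involve only circle crossings and trivalent vertices, and I would treat each by the same local state-sum analysis, again pairing off cancelling states and tracking the vertex-count normalization; several of them, such as the bigon collapses (v) and (vi), can also be bootstrapped from (i) once the local contribution of a single bigon is computed, and (x) follows after observing that a color-$0$ edge contributes trivially to $\phi_c$. I expect the main obstacle to lie in the crossing resolution (iv) and in the six-parameter relation (ix) with $j\ge k\ge l$: in both the admissible local assignments are numerous, the cancellations among white/black pairs must be organized carefully, and keeping every power of $t^{1/2}-t^{-1/2}$ and every sign consistent is where the real work resides. Once (iv) is in hand, however, the remaining relations reduce to finite, if tedious, local computations of the type already rehearsed in the proof of Theorem \ref{invtheo}.
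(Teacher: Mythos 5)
Your proposal follows essentially the same route as the paper: a local state-sum comparison with the initial point placed conveniently, cancellation of paired states, and careful tracking of the normalization factors $\mathcal{F}(D)$, $\mathcal{C}(D,c)$, $\vert\delta\vert$ and $(t^{1/2}-t^{-1/2})^{\vert V\vert-1}$, which is exactly how the paper proceeds (illustrating the method on relation (ix) and leaving the remaining cases to the reader). The only cosmetic difference is in (i), where the paper treats the vertex-free loop as a degenerate case whose state sum is fixed by consistency with vertex insertion rather than by counting the empty Kauffman state, but both readings yield $\langle D\vert\delta\rangle=1$ and hence the same value $1/[i]$.
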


\begin{proof}
Like other literature on MOY calculus, it is customary to treat a loop as a special vertex-less trivalent MOY graph. We can then evaluate the Kauffman state sum formula in relation (i) as follows.  Note that both the set of crossings and the set of unmarked regions (after we place a base point on the circle) are empty. Hence, the set of Kauffman states $S(D)$ is a singleton set whose unique element is the unique map
$s: \, \emptyset=\operatorname{Cr}(D)\rightarrow \operatorname{Re}(D)\backslash \{R_u, R_{v}\}=\emptyset.$  Then, according to Definition \ref{def:statesum}, $M(s)$ and $A(s)$ are both the trivial product, i.e., 1.  With $|V|=0$, $\mathcal{F}(\mathbb{D},c)=1$, $\mathcal{C}(D,c)=t^{-i/2}$ and $\vert \delta \vert=t^{-i}-1$ for the clockwise diagram and similar computation for the counter-clockwise diagram, Definition \ref{def:normalizedAlexanderpolynomial} gives the desired relation (i).

Relation (ii) follows from the definition in Section 2.2, and (iii) follows from the definition of $\mathcal{F}(\mathbb{D},c)$.

To prove the other relations, the basic strategy is similar to Proposition \ref{invtheo} - we establish a bijection between the terms in the state sum $\langle D ,c \rangle=\vert \delta\vert^{-1}\sum_{s\in S(D, \delta)} M(s)\cdot A(s)$ of the left-hand and right-hand side of each relation.

As an illustration, let us prove the first relation in (iv). Note that we used similar relations in the proof of Proposition \ref{initial} without giving a proof. Suppose the left-hand diagram in the first relation of (iv) is $D$ and the right-hand two diagrams are $D_1$ and $D_2$. Clearly, the factor $\mathcal{F}(\mathbb{D},c) \mathcal{C}(D,c)$ is the same for $D, D_1$ and $D_2$, so we are left to compare $(t^{-1/2}-t^{1/2})^{1-\vert V \vert}\langle D ,c \rangle$ of both sides.

In the following calculations, we again adopt the notation $\{i\}:=t^{i/2}-t^{-i/2}=[i](t^{1/2}-t^{-1/2}).$
Note that a state $s$ of the left-hand diagram $D$ assigns one of the four corners N, E, S, W at the crossings:  
\begin{equation*}
\begin{tikzpicture}[baseline=-0.65ex, thick, scale=0.8]
\draw (1,-1)   -- (0.2,-0.2);
\draw (-1, -1) [->] -- (1, 1) node[above]{$i$};
\draw (-0.2,0.2) [->] to (-1,1)  node[above]{$j$};
\draw (0,0.5) node{$\bullet$};
\draw (0, -1.5) node{N};
\end{tikzpicture}\quad\quad
\begin{tikzpicture}[baseline=-0.65ex, thick, scale=0.8]
\draw (1,-1)   -- (0.2,-0.2);
\draw (-1, -1) [->] -- (1, 1) node[above]{$i$};
\draw (-0.2,0.2) [->] to (-1,1)  node[above]{$j$};
\draw (0.5,0) node{$\bullet$};
\draw (0, -1.5) node{E};
\end{tikzpicture}
\quad\quad
\begin{tikzpicture}[baseline=-0.65ex, thick, scale=0.8]
\draw (1,-1)   -- (0.2,-0.2);
\draw (-1, -1) [->] -- (1, 1) node[above]{$i$};
\draw (-0.2,0.2) [->] to (-1,1)  node[above]{$j$};
\draw (-0.5,0) node{$\bullet$};
\draw (0, -1.5) node{W};
\end{tikzpicture}
\quad\quad
\begin{tikzpicture}[baseline=-0.65ex, thick, scale=0.8]
\draw (1,-1)   -- (0.2,-0.2);
\draw (-1, -1) [->] -- (1, 1) node[above]{$i$};
\draw (-0.2,0.2) [->] to (-1,1)  node[above]{$j$};
\draw (0,-0.5) node{$\bullet$};
\draw (0, -1.5) node{S};
\end{tikzpicture}
\end{equation*}
The corresponding local contributions to the state sum $\langle D,c \rangle$ are $-t^{-i}$, $1$, $t^{-i}$ and $1$, respectively.

For the right-hand side, we can similarly divide all states into N-, E-, W- or S-states according to the local assignments in $D_1$ and $D_2$.  For example, there is no E-state for $D_1$ or N-state for $D_2$; the N-states for $D_1$ and the E-states for $D_2$ have the local assignments as depicted below,
\begin{align*}
\begin{tikzpicture}[baseline=-0.65ex,thick,scale=0.65]
\draw (0,-2) node[below]{$i$} [->-] to (0, 1);
\draw (0, 1) [->] to (0,2) node[above]{$j$};
\draw (0,1) node[circle,fill,inner sep=1pt]{};
\draw (3,0) node[circle,fill,inner sep=1pt]{};
\draw (3,-2) [->-] node[below]{$j$} to (3,0);
\draw (3,0) [->] to (3,2) node[above]{$i$};
\draw (3,-0) [->-] to (0, 1);
\draw (1.5, 0) node {$j-i$};
\draw (3,0) circle (0.6);
\draw (0,1) circle (0.6);
\draw (3, -0.4) node{$\bullet$};
\draw (0, 0.6) node{$\bullet$};
\draw (0.8, 0.9) node{$\bullet$};
\draw (1.5, -3) node{N};
\end{tikzpicture}\quad\quad\quad\quad
\begin{tikzpicture}[baseline=-0.65ex,thick,scale=0.8]
\draw (-1,-1.5) [->-] node[below]{$i$} -- (0,-0.5);
\draw (1,-1.5) [->-] node[below]{$j$} -- (0,-0.5);
\draw (0, -0.5) [->-] to (0, 1);
\draw (0,1) [->] -- (-1, 2);
\draw (0,1) [->] -- (1,2);
\draw (0,-0.5) node[circle,fill,inner sep=1pt]{};
\draw (1.2, 2.3) node {$i$};
\draw (-1.2, 2.3) node {$j$};
\draw (0.8, 0.25) node {$i+j$};
\draw (0,-0.5) circle (0.5);
\draw (0,1) circle (0.5);
\draw (-0.2, -0.7) node{$\bullet$};
\draw (0.6, -0.8) node{$\bullet$};
\draw (0, 0.7) node{$\bullet$};
\draw (0, -2.5) node{E};
\end{tikzpicture}
\end{align*}
and their contributions to the state sum are $\{i\}\{j\}t^{(j-i)/2}$ and $\{i\}\{i+j\}t^{j/2}$, respectively. Note that the number of vertices in $D_1$ and $D_2$ is $2$ greater than that of $D$. After multiplying the coefficients in the first relation of (iv), we see that the local contributions of all N-states and E-states to both sides coincide.
\begin{eqnarray*}
&\text{N-states:}& \quad \quad \displaystyle \frac{-t^{-\frac{i+j}{2}}}{[i]\,[j]} \cdot (t^{-1/2}-t^{1/2})^{-2} \{i\}\{j\}t^{(j-i)/2}=-t^{-i}\\
&\text{E-states:}& \quad \quad \displaystyle \frac{t^{-\frac{j}{2}}}{[i]\,[i+j]} \cdot (t^{-1/2}-t^{1/2})^{-2} \{i\}\{i+j\}t^{j/2}=1.
\end{eqnarray*}

Similarly, for the local assignment (W), there are the corresponding W-states for $D_1$ and $D_2$:
\begin{align*}
\text{W:}\quad \quad
\begin{tikzpicture}[baseline=-0.65ex,thick,scale=0.65]
\draw (0,-2) node[below]{$i$} [->-] to (0, 1);
\draw (0, 1) [->] to (0,2) node[above]{$j$};
\draw (0,1) node[circle,fill,inner sep=1pt]{};
\draw (3,0) node[circle,fill,inner sep=1pt]{};
\draw (3,-2) [->-] node[below]{$j$} to (3,0);
\draw (3,0) [->] to (3,2) node[above]{$i$};
\draw (3,-0) [->-] to (0, 1);
\draw (1.5, 0) node {$j-i$};
\draw (3,0) circle (0.6);
\draw (0,1) circle (0.6);
\draw (3, -0.4) node{$\bullet$};
\draw (-0.2, 0.2) node{$\bullet$};
\draw (0.35, 0.9) node{$\bullet$};
\end{tikzpicture}\quad\quad \text{and}
\begin{tikzpicture}[baseline=-0.65ex,thick,scale=0.8]
\draw (-1,-1.5) [->-] node[below]{$i$} -- (0,-0.5);
\draw (1,-1.5) [->-] node[below]{$j$} -- (0,-0.5);
\draw (0, -0.5) [->-] to (0, 1);
\draw (0,1) [->] -- (-1, 2);
\draw (0,1) [->] -- (1,2);
\draw (0,-0.5) node[circle,fill,inner sep=1pt]{};
\draw (1.2, 2.3) node {$i$};
\draw (-1.2, 2.3) node {$j$};
\draw (0.8, 0.25) node {$i+j$};
\draw (0,-0.5) circle (0.5);
\draw (0,1) circle (0.5);
\draw (0.2, -0.7) node{$\bullet$};
\draw (-0.6, -0.8) node{$\bullet$};
\draw (0, 0.7) node{$\bullet$};
\end{tikzpicture}.
\end{align*}
Their contributions to the state sums are $\{j-i\}\{j\}t^{-i/2}$ and $\{i+j\}\{j\}t^{-i/2}$, respectively. We can check that
\begin{eqnarray*}
\text{W-states:} \quad \quad \displaystyle&& \frac{-t^{-\frac{i+j}{2}}}{[i]\,[j]} \cdot (t^{-1/2}-t^{1/2})^{-2} \{j-i\}\{j\}t^{-i/2}\\
&&+\frac{t^{-\frac{j}{2}}}{[i]\,[i+j]} \cdot (t^{-1/2}-t^{1/2})^{-2}\cdot \{i+j\}\{j\}t^{-i/2}=t^{-i}.
\end{eqnarray*}

For the local assignment (S), there are the corresponding S-states for $D_1$ and $D_2$:
\begin{align*}
\text{S:}\quad \quad
\begin{tikzpicture}[baseline=-0.65ex,thick,scale=0.65]
\draw (0,-2) node[below]{$i$} [->-] to (0, 1);
\draw (0, 1) [->] to (0,2) node[above]{$j$};
\draw (0,1) node[circle,fill,inner sep=1pt]{};
\draw (3,0) node[circle,fill,inner sep=1pt]{};
\draw (3,-2) [->-] node[below]{$j$} to (3,0);
\draw (3,0) [->] to (3,2) node[above]{$i$};
\draw (3,-0) [->-] to (0, 1);
\draw (1.5, 0) node {$j-i$};
\draw (3,0) circle (0.6);
\draw (0,1) circle (0.6);
\draw (3, -0.4) node{$\bullet$};
\draw (0.2, 0.2) node{$\bullet$};
\draw (0.35, 0.9) node{$\bullet$};
\draw (0, 0.65) node{$\circ$};
\draw (0.7, 0.5) node{$\circ$};
\end{tikzpicture}\quad\quad \text{and}
\begin{tikzpicture}[baseline=-0.65ex,thick,scale=0.8]
\draw (-1,-1.5) [->-] node[below]{$i$} -- (0,-0.5);
\draw (1,-1.5) [->-] node[below]{$j$} -- (0,-0.5);
\draw (0, -0.5) [->-] to (0, 1);
\draw (0,1) [->] -- (-1, 2);
\draw (0,1) [->] -- (1,2);
\draw (0,-0.5) node[circle,fill,inner sep=1pt]{};
\draw (1.2, 2.3) node {$i$};
\draw (-1.2, 2.3) node {$j$};
\draw (0.8, 0.25) node {$i+j$};
\draw (0,-0.5) circle (0.5);
\draw (0,1) circle (0.5);
\draw (0.2, -0.7) node{$\bullet$};
\draw (-0.3, -1.1) node{$\bullet$};
\draw (0, 0.7) node{$\bullet$};
\draw (-0.2, -0.7) node{$\circ$};
\draw (0.3, -1.1) node{$\circ$};
\end{tikzpicture}.
\end{align*}
Their contributions to the state sums sum up to $\{j\}^2$ and $\{i+j\}^2$, respectively. We can check that
\begin{eqnarray*}
\text{S-states:} \quad \quad \displaystyle&& \frac{-t^{-\frac{i+j}{2}}}{[i]\,[j]} \cdot (t^{-1/2}-t^{1/2})^{-2} \{j\}^2\\
&&+\frac{t^{-\frac{j}{2}}}{[i]\,[i+j]} \cdot (t^{-1/2}-t^{1/2})^{-2}\cdot \{i+j\}^2=1.
\end{eqnarray*}
Thus, we have exhausted all states of both sides and verified the first relation in (iv).

The rest of the relations can be shown in a similar way. We remark that in the proof of relations (v) and (vii), we need to consider the change of curliness $\mathcal{C}(D, c)$ by applying Lemma \ref{curli}. We omit the details here.

\end{proof}

\subsection{Graphical definition of the Alexander polynomial of a link.}

The MOY-type relations in the previous section provides us a graphical definition of the Alexander polynomial of a link.

\begin{defn}\label{def:Alexanderpolynomiallink}
\rm
Suppose $D$ is a diagram of a link $L$.  Let $w(D)$ be the \emph{writhe} of $D$, i.e., the number of positive crossings \diaCrossP minus the number of negative crossings \diaCrossN. Define
\begin{equation}
\Delta(D):=t^{w(D)/2}(\mathbb{D}),
\end{equation}
where $\mathbb{D}$ is the diagram $D$ equipped with a blackboard framing and a constant coloring $c(e)=1$ for each edge $e$ (which corresponds to the link component in this case).
\end{defn}

The next lemma implies that $\Delta(D)$ is an invariant of the link $L$.

\begin{lemma}
$\Delta(D)$ is invariant under Reidemeister moves (I), (II) and (III).
\end{lemma}

\begin{proof}
For move (I), we apply the relations (ii) (iv) and (v) and obtain
\begin{align*}
\left( \begin{tikzpicture}[baseline, thick, scale=0.35]
\draw (1,-1)   -- (0.2,-0.2);
\draw (-1, -1) [->] -- (1, 1);
\draw (-0.2,0.2) --  (-1,1) ;
\draw (-1, 1) arc (90:270:1);
\end{tikzpicture} \right)
&=-t^{-1}\left( \begin{tikzpicture}[baseline, thick, scale=0.35]
\draw[
        decoration={markings, mark=at position 0.625 with {\arrow{>}}},
        postaction={decorate}
        ]
(0,0) circle (1.3);
\draw (1.4,1.2) node {$1$};
\draw (2.5, -1.5) [->] -- (2.5, 1.5);
\draw (3.2,1.2) node {$1$};
\end{tikzpicture} \right)
+ \frac{t^{-1/2}}{[1]\,[2]}
\left(\begin{tikzpicture}[baseline=-0.65ex, thick, scale=0.4]
\draw (-1,-2)   -- (0,-1);
\draw (1,-2)  -- (0,-1);
\draw (0,0) [->] -- (-1, 1) node[above]{$1$};
\draw (0,0) [->] to (1,1)  node[above]{$1$};
\draw (0,0) node[circle,fill,inner sep=1pt]{};
\draw (0,-1) node[circle,fill,inner sep=1pt]{};
\draw (0.6, -0.25) node {$2$};
\draw (1.3, -2) node {$1$};
\draw (0, -1) [->-] to (0,0);
\draw (-1,1) to [out=180, in=90] (-2,-0.5)  to [out=270,in=180] (-1,-2);
\end{tikzpicture}\right)\\
&=\frac{t^{-1/2}[1]\,[2] }{[1]\,[2]}
\left(\begin{tikzpicture}[baseline=-0.65ex, thick, scale=0.4]
\draw (2.5, -1.5) [->] -- (2.5, 1.5);
\draw (3.2,1.2) node {$1$};
\end{tikzpicture}\right)
=t^{-1/2}
\left(\begin{tikzpicture}[baseline=-0.65ex, thick, scale=0.4]
\draw (2.5, -1.5) [->] -- (2.5, 1.5);
\end{tikzpicture}\right).
\end{align*}
The remaining cases of move (I) can be proved in a similar way.

The proof of the invariance under Reidemeister moves (II) and (III) follows the same argument of \cite[Theorem 3.1]{MR1659228}. We leave the detail to the reader.
\end{proof}

Note that Theorem \ref{moytheo} (iv) implies:

\begin{align*}
 \quad
&\left(\begin{tikzpicture}[baseline=-0.65ex, thick, scale=0.7]
\draw (1,-1)   -- (0.2,-0.2);
\draw (-1, -1) [->] -- (1, 1);
\draw (-0.2,0.2) [->] to (-1,1);
\end{tikzpicture}\right)
  =-t^{-1}
\left(
\begin{tikzpicture}[baseline=-0.65ex, thick, scale=0.7]
\draw (0, -1) [->] to (0,1);
\draw (1.5,-1) [->] to (1.5,1);
\draw (-0.5,1) node {$1$};
\draw (2,1) node {$1$};
\end{tikzpicture}\right)
+ \, \frac{t^{-1/2}}{[1]\,[2]}\cdot
\left(\begin{tikzpicture}[baseline=-0.65ex, thick, scale=0.5]
\draw (-1,-1.5)   -- (0,-0.5);
\draw (1,-1.5)  -- (0,-0.5);
\draw (0,0.5) [->] -- (-1, 1.5);
\draw (0,0.5) [->] to (1,1.5);
\draw (0,0.5) node[circle,fill,inner sep=1pt]{};
\draw (0,-0.5) node[circle,fill,inner sep=1pt]{};
\draw (0.6, 0.25) node {$2$};
\draw (-1.3, -1.5) node {$1$};
\draw (1.3, -1.5) node {$1$};
\draw (-1.3, 1.5) node {$1$};
\draw (1.3, 1.5) node {$1$};
\draw (0, -0.5) [->-] to (0,0.5);
\end{tikzpicture}\right), \text{ \quad and }\\
&\left(\begin{tikzpicture}[baseline=-0.65ex, thick, scale=0.7]
\draw (1,-1) [->]  -- (-1,1) ;
\draw (-1, -1) to (-0.2, -0.2);
\draw (0.2, 0.2) [->] -- (1, 1);
\end{tikzpicture}\right)
  =-t
\left(
\begin{tikzpicture}[baseline=-0.65ex, thick, scale=0.7]
\draw (0, -1) [->] to (0,1);
\draw (1.5,-1) [->] to (1.5,1);
\draw (-0.5,1) node {$1$};
\draw (2,1) node {$1$};
\end{tikzpicture}\right)
+ \, \frac{t^{1/2}}{[1]\,[2]}\cdot
\left(\begin{tikzpicture}[baseline=-0.65ex, thick, scale=0.5]
\draw (-1,-1.5)   -- (0,-0.5);
\draw (1,-1.5)  -- (0,-0.5);
\draw (0,0.5) [->] -- (-1, 1.5);
\draw (0,0.5) [->] to (1,1.5);
\draw (0,0.5) node[circle,fill,inner sep=1pt]{};
\draw (0,-0.5) node[circle,fill,inner sep=1pt]{};
\draw (0.6, 0.25) node {$2$};
\draw (-1.3, -1.5) node {$1$};
\draw (1.3, -1.5) node {$1$};
\draw (-1.3, 1.5) node {$1$};
\draw (1.3, 1.5) node {$1$};
\draw (0, -0.5) [->-] to (0,0.5);
\end{tikzpicture}\right).\\
\end{align*}

Thus, the link invariant $\Delta(D)$ satisfies the skein relation:

\begin{equation*}
\Delta\left(\begin{tikzpicture}[baseline=-0.65ex, thick, scale=0.5]
\draw (1,-1)   -- (0.2,-0.2);
\draw (-1, -1) [->] -- (1, 1);
\draw (-0.2,0.2) [->] to (-1,1);
\end{tikzpicture}\right)-
\Delta\left(\begin{tikzpicture}[baseline=-0.65ex, thick, scale=0.5]
\draw (1,-1) [->]  -- (-1,1) ;
\draw (-1, -1) to (-0.2, -0.2);
\draw (0.2, 0.2) [->] -- (1, 1);
\end{tikzpicture}\right)
=(t^{1/2}-t^{-1/2})
\Delta\left(
\begin{tikzpicture}[baseline=-0.65ex, thick, scale=0.5]
\draw (0, -1) [->] to (0,1) ;
\draw (2,-1) [->] to (2,1) ;
\end{tikzpicture}\right).
\end{equation*}
In addition, Theorem \ref{moytheo} (i) implies that $\Delta(D)=1$ when $D$ is a trivial knot diagram.  Altogether, it follows that $\Delta(D)$ is the same as the classical one-variable Alexander polynomial of a link.

\subsection{MOY-type relations characterize $\Delta_{(\mathbb{G},c)}$.}

Grant showed in \cite{MR3125899} that the classical MOY relations in \cite{MR1659228} uniquely determine the MOY polynomials for colored trivalent graphs. Our next theorem gives a parallel result for the Alexander polynomial $\Delta_{(\mathbb{G},c)}(t)$.

\begin{theo}\label{theorem:axiom}
The relations (i)-(x) determine $\Delta_{(\mathbb{G},c)}(t)$ for a framed trivalent graph $\mathbb{G}$ with a positive coloring $c$.
\end{theo}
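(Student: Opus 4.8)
The plan is to exhibit an explicit reduction algorithm that uses only relations (i)--(x) to compute $\Delta_{(D,c)}(t)$ for an arbitrary framed trivalent graph $D$ with positive coloring, and then to conclude that any function obeying these relations must agree with the state-sum invariant. First I would strip away the auxiliary structure. Relation (iii) absorbs each framing twist \Ptwist and \Ntwist into a power of $t^{i/4}$, so we may assume $D$ is presented by a blackboard-framed diagram. Relation (iv) resolves each crossing \diaCrossP or \diaCrossN into a $\mathbb{Z}[t^{\pm 1/2}]$-linear combination of diagrams with one fewer crossing; since the colors $i+j$ and $|i-j|$ occurring on the right-hand side remain non-negative and the summands are planar trivalent graphs, iterating (iv) rewrites $\Delta_{(D,c)}(t)$, with coefficients dictated purely by the relations, as a linear combination of values on \emph{closed planar trivalent MOY graphs} with non-negative coloring. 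Relation (x) then deletes any edge that has acquired color $0$, and relation (ii) sends the value of a disconnected diagram to zero.

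The heart of the matter is the claim that relations (i), (v)--(x) determine the value of every closed connected planar trivalent MOY graph with positive coloring. I would argue by induction on the number of vertices $|V|$. The base case $|V|=0$ is a single circle, fixed by (i). For the inductive step I would invoke a combinatorial lemma, the analogue of the reduction step used by Murakami--Ohtsuki--Yamada \cite{MR1659228} and sharpened by Grant \cite{MR3125899}: any closed planar trivalent MOY graph with $|V|>0$ contains a local configuration to which one of the reduction relations applies. An Euler-characteristic count on $S^2$ (the trivalence gives $3|V|=2|E|$, whence $|F|=2+|V|/2$ and the average face degree is $6|V|/(4+|V|)<6$) forces a face of small degree, and the source-free, sink-free orientation together with the separating line $L_v$ at each vertex refines this into either a digon, removable by (v) or (vi), or a square, resolvable by (vii) or its colored generalization (ix). Each such relation rewrites the graph as a combination of graphs with strictly fewer vertices, so the induction closes; the associativity relations (viii) and the zero-edge relation (x) are used along the way to regroup the fusions at a vertex and to clear edges of color $0$ created during a reduction.

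The main obstacle I anticipate is matching the \emph{color inequalities} demanded by the reduction relations to the configurations the face count actually produces. Relations (vi), (vii) and (ix) carry hypotheses such as $i\geq j$ or $j\geq k\geq l$, and a digon or square delivered by the combinatorial lemma need not, a priori, present its colors in the admissible order or the admissible orientation. The function of the associativity relations (viii) is precisely to reroute the colors at the two vertices bounding a low-degree face so that one of the reducible patterns (v)--(vii) or (ix) becomes directly applicable; verifying that (viii), (ix) and (x) together always suffice to normalize an arbitrary small face into reducible form is the delicate combinatorial core, exactly as in Grant's treatment of the $U_q(\mathfrak{sl}_n)$ case, and adapting it to the present $n=0$ relations is where the real work lies.

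Finally, I would note that the algorithm a priori involves choices (which face to reduce, in which order), so that the relations determine a \emph{well-defined} output only if all choices yield the same answer. This consistency is automatic here: Theorem \ref{moytheo} shows that the state-sum invariant $\Delta_{(D,c)}(t)$ already satisfies every one of (i)--(x), so any two functions obeying the relations must both equal the common value produced by any run of the reduction, and hence coincide. This gives the asserted characterization of $\Delta_{(D,c)}(t)$ by the MOY-type relations.
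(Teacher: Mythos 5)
Your overall framework (use (iii) and (iv) to reduce to closed planar trivalent MOY graphs, then induct, then invoke the existence of the state sum for consistency) is sound, and the first and last paragraphs match the paper. But the central step --- the ``combinatorial lemma'' that every closed connected planar trivalent MOY graph with $|V|>0$ contains a digon or a square face reducible by (v)--(vii) or (ix) --- is false as stated, and this is exactly the point you defer as ``where the real work lies.'' The Euler count only produces a face of degree at most $5$, and faces of degree $3$ or $5$ do occur and are matched by none of the relations. Concretely, take the planar $K_4$: orient it with two merge and two split vertices (e.g.\ $3\to 4$, $4\to 1$, $4\to 2$, $1\to 3$, $3\to 2$, $2\to 1$), which admits the positive balanced coloring $b=c=e=1$, $a=f=2$, $d=3$. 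This is a closed connected planar trivalent MOY graph with positive coloring all of whose four faces are triangles, so your reduction step does not fire. One can rescue such examples by first applying the associativity relation (viii) to a pair of adjacent same-type vertices to create a digon, but proving that (viii)--(x) always suffice to normalize an arbitrary small face into a configuration where (v)--(vii) or (ix) applies, with the color inequalities $i\geq j$ and $j\geq k\geq l$ respected, is precisely the unproved core of your argument. As written, the proof does not close.

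For comparison, the paper avoids face-counting entirely. It first reduces the \emph{maximal color} $m>2$: any $m$-colored edge sits between a merge and a split, relations (vi) and (viii) normalize the adjacent colors to $1$ and $m-1$, and then relation (ix) with $i=k=l=1$, $j=m-1$ together with (x) rewrites the configuration using only colors $\leq m-1$. Once every edge is colored in $\{1,2\}$, it inducts on the number of $2$-colored edges via a different device: it builds a crossing diagram $\Gamma$ of an \emph{unlink} whose distinguished resolution under (iv) is the given planar graph $D$, so that $(\Gamma)$ is known, all other resolutions have fewer $2$-colored edges, and $(D)$ is solved for from the resolution identity. If you want to keep your face-reduction strategy, you must supply the missing normalization lemma; otherwise the color-reduction plus unlink-resolution route is the one that is actually known to work here.
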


We first assert that $\Delta_{(\mathbb{G},c)}(t)$ is computable when we restrict to trivalent plane graphs colored with $\{1,2\}$. This is analogous to a result of Grant \cite[Proposition 5.2]{MR3125899} and can be proved verbatim.

\begin{lemma}\label{lemma:trivalent}
The relations (i)-(x) determine $\Delta_{(\mathbb{G},c)}(t)$ when $(\mathbb{G},c)$ is a trivalent framed plane graph colored with $\{1,2\}$.
\end{lemma}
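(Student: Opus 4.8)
The plan is to show that the relations (i)--(x) reduce any trivalent framed planar graph colored with $\{1,2\}$ to an explicit scalar, so that they force its value; the local moves may be performed inside a larger diagram because $\Delta_{(D,c)}(t)$ is a framed-isotopy (in particular planar-isotopy) invariant by Theorem \ref{framed}, and the identities of Theorem \ref{moytheo} are local. The induction will run on the number of trivalent vertices of $D$. First I would dispose of the framing and the base case. By relation (iii), every half-twist on an edge of color $i$ may be traded for a scalar $t^{\pm i/4}$, so we may assume the blackboard framing. If $D$ has no vertex it is a disjoint union of colored circles; relation (ii) gives $\Delta_{(D,c)}(t)=0$ unless $D$ is a single circle, in which case relation (i) evaluates it to $1/[i]$. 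This settles the base case.

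The heart of the argument is a structural observation about the inductive step. Assume $D$ is connected with at least one vertex. Since all colors lie in $\{1,2\}$ and each vertex is balanced, every trivalent vertex is of type $2=1+1$: two color-$1$ edges merge into, or split off, a single color-$2$ edge. Tracking the orientations shows that every edge runs from a \emph{split} vertex to a \emph{merge} vertex, so the vertices are $2$-colored with every edge joining the two classes; that is, $D$ is bipartite as an abstract graph, and hence every face of $D\subset S^2$ is bounded by an even number of edges. A Euler-characteristic count for a trivalent graph gives $E=3F-6$, so the face-degree sum equals $2E=6F-12$ and some face has at most five sides. Combined with evenness, some face is a bigon or a square.

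It then remains to reduce such an innermost face. By the vertex condition, a bigon has external legs of a single color and exactly one of two admissible colorings: two color-$1$ edges running through a color-$2$ bubble, matching relation (vi) with $i=2$, or a color-$1$/color-$2$ bigon with color-$1$ legs, matching relation (v) with $i=j=1$. Each removes the two bigon vertices. A square has, by bipartiteness, alternating merge and split vertices, and is resolved by relation (vii) into graphs with strictly fewer vertices ($4\to2$ and $4\to0$); in the degenerate case $i=j$, where the internal edge acquires color $0$, that edge is deleted via relation (x). In every case the vertex count strictly decreases, so the induction closes, with disconnected terms produced along the way handled by relation (ii).

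The main obstacle is precisely the final bookkeeping of this last step: verifying that \emph{every} bigon and square that can arise in a $\{1,2\}$-colored graph matches one of the stated relations after using the rotation, reflection and orientation-reversal symmetries of the invariant, and confirming that the square resolution (together with (x)) never stalls, so that a strictly complexity-reducing move is always available. The even-face/Euler count cleanly guarantees that a reducible face exists; the care lies in checking that relations (v), (vi), (vii) and (x) collectively exhaust the admissible local colorings.
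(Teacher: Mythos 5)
Your strategy is genuinely different from the paper's: the paper inducts on the number of $2$-colored edges, replaces each $2$-colored edge together with its two endpoints by a crossing (chosen so that the resulting link diagram $\Gamma$ is an unlink), expands every crossing of $\Gamma$ by relation (iv), and solves the resulting identity for $(D)$, all other resolutions having strictly fewer $2$-colored edges. Your route --- Euler characteristic plus bipartiteness to locate a face of degree at most $4$, then a local reduction --- has a genuine gap sitting exactly where you defer to ``bookkeeping.'' In a $\{1,2\}$-colored trivalent graph the $2$-colored boundary edges of a square face form a matching of the $4$-cycle, so a square is of one of three types: two opposite $2$-colored edges, exactly one $2$-colored edge, or none. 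The first is relation (vii) with $i=j=1$; the second is relation (ix) with $i=k=l=1$, $j=2$, which is its only admissible specialization. The third type --- all four boundary edges colored $1$ and all four external legs colored $2$ --- matches none of (i)--(x): relation (vii) forces two boundary edges of color $i+j\geq 2$, and relation (ix) forces a boundary edge of color $i+k$ with another of color $k$, so an all-$1$ boundary would require $i=0$. This type cannot be avoided. Take the truncated octahedron: its twelve edges joining distinct square faces form a perfect matching; color these $2$, color the twenty-four square-face edges $1$, and orient by the bipartition. The result is a connected, bridgeless, planar trivalent $\{1,2\}$-colored MOY graph with no bigons, whose hexagonal faces have degree $6$ and whose only faces of degree at most $4$ are the six all-$1$ squares. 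Your induction stalls there.

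A secondary problem is circularity: you invoke ``reflection and orientation-reversal symmetries of the invariant'' to match faces to (vii) and (ix), but those symmetries are Proposition \ref{prop:symm}, which is deduced from Theorem \ref{theorem:axiom}, which rests on this very lemma. To use mirrored or orientation-reversed versions of (vii) and (ix) at this stage you would have to verify them independently (as the paper does for the reversed versions inside the proof of Proposition \ref{prop:symm}). The bigon analysis, the Euler count, the treatment of framing via (iii), and the base case are all fine; the square case analysis is the part that fails.
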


\begin{proof}[Proof of Theorem \ref{theorem:axiom}]
Given a framed trivalent graph diagram $\mathbb{D}$, we first apply (iii) and (iv) to eliminate all framing symbols and resolve all crossings to get plane graphs. Therefore, it suffices to show that $\Delta_{(\mathbb{D},c)}(t)$ is computable when $\mathbb{D}$ is a diagram of a plane graph.

The subsequent argument is adapted from \cite[Theorem 5.1]{MR3125899}.  Our goal is to use a sequence of the MOY-type relations to replace all edges colored with $m$ by edges with colors smaller than $m$, where $m>2$ is the maximal color of the graph (following the idea of Wu \cite{MR3234803}). If this can be achieved, then for any colored diagram $\mathbb{D}$, there is a diagram $\mathbb{D}'$ that is colored only with $\{1,2\}$ such that $(\mathbb{D})=p(t)\cdot(\mathbb{D}')$ for some function $p(t)$. It then follows from Lemma \ref{lemma:trivalent} that $(\mathbb{D})$ is computable.

To this end, suppose that $\mathbb{D}$ contains an edge colored with $m>2$. Since all the edges have positive colorings, it locally looks like
\begin{equation*}
\begin{tikzpicture}[baseline=-0.65ex, thick]
\draw (0,-1) [->-] to [out=90,in=270] (0.5,-0.33);
\draw (0.5, -0.33) [->-] to [out=90,in=270] (0.5,0.33);
\draw (0.5, 0.33) [->] to [out=90,in=270] (0,1);
\draw (1,-1) [->-] to [out=90,in=270] (0.5,-0.33);
\draw (1,1) [<-] to [out=270,in=90] (0.5,0.33);
\draw (0, -1.25) node {$j$};
\draw (0.9,-1.25) node {$m-j$};
\draw (0,1.25) node {$l$};
\draw (1,1.25) node {$m-l$};
\draw (1,0) node {$m$};
\draw (0.5, -0.33) node[circle,fill,inner sep=1pt]{};
\draw (0.5, 0.33) node[circle,fill,inner sep=1pt]{};
\end{tikzpicture}
\end{equation*}
with $0<j, l <m$. If $j \geq 2$ or $l \geq 2$ we apply (vi) and (viii) to obtain the following identity:

\begin{equation}\label{RelationMaximalColor}
\left( \begin{tikzpicture}[baseline=-0.65ex, thick]
\draw (0,-1) [->-] to [out=90,in=270] (0.5,-0.33);
\draw (0.5, -0.33) [->-] to [out=90,in=270] (0.5,0.33);
\draw (0.5, 0.33) [->] to [out=90,in=270] (0,1);
\draw (1,-1) [->-] to [out=90,in=270] (0.5,-0.33);
\draw (1,1) [<-] to [out=270,in=90] (0.5,0.33);
\draw (0, -1.25) node {$j$};
\draw (0.9,-1.25) node {$m-j$};
\draw (0,1.25) node {$l$};
\draw (1,1.25) node {$m-l$};
\draw (1,0) node {$m$};
\draw (0.5, -0.33) node[circle,fill,inner sep=1pt]{};
\draw (0.5, 0.33) node[circle,fill,inner sep=1pt]{};
\end{tikzpicture}\right)
=\frac{1}{[j]^2\,[l]^2}\left(\begin{tikzpicture}[baseline=-0.65ex, thick, scale=0.85]
\draw (0,-3) [->-] to [out=90,in=270] (0,-2.5);
\draw (0, -2.5) [->-] to [out=135,in=270] (-0.5,-2) to [out=90,in=225] (0,-1.5);
\draw (0,-1.5) [->-] to [out=90,in=270] (1,-0.5);
\draw (0, -2.5) node[circle,fill,inner sep=1pt]{};
\draw (0, -1.5) node[circle,fill,inner sep=1pt]{};
\draw (0,-2.5) [->-] to [out=45,in=270] (0.5,-2) to [out=90,in=315] (0,-1.5);
\draw (1, -0.5) node[circle,fill,inner sep=1pt]{};
\draw (1,0.5) [->-] to [out=90,in=270] (0,1.5);
\draw (0, 1.5) [->-] to [out=45,in=270] (0.5,2) to [out=90,in=315] (0,2.5);
\draw (0, 2.5) [->] to [out=90,in=270] (0,3);
\draw (1, 0.5) node[circle,fill,inner sep=1pt]{};
\draw (0, 1.5) node[circle,fill,inner sep=1pt]{};
\draw (0, 2.5) node[circle,fill,inner sep=1pt]{};
\draw (0,1.5) [->-] to [out=135,in=270] (-0.5,2) to [out=90,in=225] (0,2.5);
\draw (1,-0.5) [->-] to (1,0.5);
\draw (2,-3) [->-] to [out=90,in=270] (1,-0.5);
\draw (2,3) [<-] to [out=270,in=90] (1,0.5);
\draw (0, -3.25) node {$j$};
\draw (2,-3.25) node {$m-j$};
\draw (0,3.25) node {$l$};
\draw (2,3.25) node {$m-l$};
\draw (1.6,0) node {$m$};
\draw (1,-2.4) node {$j-1$};
\draw (-0.75,-2) node {$1$};
\draw (0.1,-1) node {$j$};
\draw (-0.75,2) node {$1$};
\draw (1.1,2) node {$l-1$};
\draw (0.1,1) node {$l$};
\end{tikzpicture}\right)
= \frac{1}{[j]\,[l]\,[m-1]^2}\left( \begin{tikzpicture}[baseline=-0.65ex,thick, scale=0.85]
\draw (0,-3) [->-] to (0, -2.7);
\draw (0, -2.7) to [out=90,in=270] (0,-2);
\draw (0, -2) [->-] to [out=90,in=270] (1,-0.5);
\draw (1, -0.5) [->-] to [out=90,in=270] (1,0.5);
\draw (1, 0.5) [->-] to [out=90,in=270] (0,2) to [out=90,in=270] (0,2.7);
\draw (0, 2.7) [->] to (0, 3);
\draw (0, -2.7) node[circle,fill,inner sep=1pt]{};
\draw (1, -0.5) node[circle,fill,inner sep=1pt]{};
\draw (1, 0.5) node[circle,fill,inner sep=1pt]{};
\draw (0, 2.7) node[circle,fill,inner sep=1pt]{};
\draw (2,-3) [->-] to [out=90,in=270](2,-2);
\draw (2, -2) [->-] to [out=90,in=270]  (1,-0.5);
\draw (2,3) [<-] to [out=270,in=90] (2,2);
\draw (2,2) [-<-] to [out=270,in=90] (1,0.5);
\draw (0,2.7) [-<-] to [out=270, in=180] (0.5,2.35) to [out=0,in=180] (1.5,2.35) to [out=0,in=90] (2,2);
\draw (0,-2.7) [->-] to [out=90,in=180] (0.5,-2.35) to [out=0,in=180] (1.5,-2.35) to [out=0,in=270] (2,-2);
\draw (2, -2) node[circle,fill,inner sep=1pt]{};
\draw (2, 2) node[circle,fill,inner sep=1pt]{};
\draw (0, -3.25) node {$j$};
\draw (2,-3.25) node {$m-j$};
\draw (0,3.25) node {$l$};
\draw (2,3.25) node {$m-l$};
\draw (1.6,0) node {$m$};
\draw (1,2.6) node {$l-1$};
\draw (1,-2.6) node {$j-1$};
\draw (-0.25,2) node {$1$};
\draw (-0.25,-2) node {$1$};
\draw (2.2,1.1) node {$m-1$};
\draw (2.2,-1.1) node {$m-1$};
\end{tikzpicture}\right).
\end{equation}

Now we may assume that locally the diagram that contains the edge colored with $m$ looks like

\begin{equation*}
\begin{tikzpicture}[baseline=-0.65ex, thick]
\draw (0,-1) [->-] to [out=90,in=270] (0.5,-0.33);
\draw (0.5, -0.33) [->-] to [out=90,in=270] (0.5,0.33);
\draw (0.5, 0.33) [->] to [out=90,in=270] (0,1);
\draw (1,-1) [->-] to [out=90,in=270] (0.5,-0.33);
\draw (1,1) [<-] to [out=270,in=90] (0.5,0.33);
\draw (0, -1.25) node {$1$};
\draw (0.9,-1.25) node {$m-1$};
\draw (0,1.25) node {$1$};
\draw (1,1.25) node {$m-1$};
\draw (1,0) node {$m$};
\draw (0.5, -0.33) node[circle,fill,inner sep=1pt]{};
\draw (0.5, 0.33) node[circle,fill,inner sep=1pt]{};
\end{tikzpicture}
\end{equation*}

We then apply (ix) with $i=k=l=1$ and $j=m-1$ and (x) to get
\begin{equation*}
\left(
\begin{tikzpicture}[baseline=-0.65ex, thick]
\draw (0,-1) [->-] to [out=90,in=270] (0.5,-0.33);
\draw (0.5, -0.33) [->-] to [out=90,in=270] (0.5,0.33);
\draw (0.5, 0.33) [->] to [out=90,in=270] (0,1);
\draw (1,-1) [->-] to [out=90,in=270] (0.5,-0.33);
\draw (1,1) [<-] to [out=270,in=90] (0.5,0.33);
\draw (0, -1.25) node {$1$};
\draw (0.9,-1.25) node {$m-1$};
\draw (0,1.25) node {$1$};
\draw (1,1.25) node {$m-1$};
\draw (1,0) node {$m$};
\draw (0.5, -0.33) node[circle,fill,inner sep=1pt]{};
\draw (0.5, 0.33) node[circle,fill,inner sep=1pt]{};
\end{tikzpicture}\right)
= \frac{ [m]}{ [1] \,[2]\,[m-1]}\cdot
\left(\begin{tikzpicture}[baseline=-0.65ex, thick]
\draw (0,-1) [->-] to (0, -0.33);
\draw (0, -0.33) [->-] to (0, 0.33);
\draw (0, 0.33) [->] to (0,1);
\draw (2,-1) [->-] to (2, -0.66);
\draw (2, -0.66) [->-] to (2, 0.66);
\draw (2, 0.66) [->] to (2,1);
\draw (0,-0.33) to [out=270,in=180] (0.5,-0.5) [-<-] to [out=0,in=180] (1.5,-0.5) to [out=0, in=90] (2,-0.66);
\draw (0,0.33) to [out=90, in=180] (0.5,0.5) [->-] to [out=0,in=180] (1.5,0.5) to [out=0, in=270] (2,0.66);
\draw (0.1,-1.25) node {$1$};
\draw (2,-1.25) node {$m-1$};
\draw (0.4,0) node {$2$};
\draw (0.1,1.25) node {$1$};
\draw (1, -0.8) node {$1$};
\draw (1.3,0) node {$m-2$};
\draw (1, 0.8) node {$1$};
\draw (2,1.25) node {$m-1$};
\draw (0, -0.33) node[circle,fill,inner sep=1pt]{};
\draw (0, 0.33) node[circle,fill,inner sep=1pt]{};
\draw (2, -0.66) node[circle,fill,inner sep=1pt]{};
\draw (2, 0.66) node[circle,fill,inner sep=1pt]{};
\end{tikzpicture}\right)
-[m-2]\,[m]\cdot
\left(\begin{tikzpicture}[baseline=-0.65ex, thick]
\draw (0,-1)  to (0, 0.33) [->] to (0,1);
\draw (1,-1)  to (1, -0.33) [->] to (1,1);
\draw (0,1.25) node {$1$};
\draw (1,1.25) node {$m-1$};
\end{tikzpicture}\right).\\
\end{equation*}

Observe that the right-hand side diagrams of the equation has no local colors larger than $m-1$ now. This is exactly what we aimed for and thus concludes the proof.
\end{proof}


\section{Properties and applications}
In this section, we apply the state sum formula and the MOY-type relations established in the previous sections to prove some basic properties of the Alexander polynomials $\Delta_{(G, c)}(t)$ and $\Delta_{(\mathbb{G}, c)}(t)$.  As an application, we show that the Alexander polynomial can be used to detect the non-planarity and chirality of a spatial graph.

\subsection{Symmetries}

Suppose $D$ is a diagram of a graph $G$. Recall that the \emph{mirror image} of $D$, denoted $D^*$, is obtained from $D$ by changing all positive (resp. negative) crossings of $D$ into negative (resp. positive) ones.  The \emph{orientation reversal} of $D$, denoted $-D$, is obtained from reversing all the edge orientations of $D$.  We define $G^*$ and $-G$ as the graphs represented by $D^*$ and $-D$, respectively.  It is standard that $G^*$ and $-G$ only depends on $G$ and not on $D$.  When we have a framed graph $\mathbb{G}$, we can define $\mathbb{G}^*$ and $-\mathbb{G}$ in a similar way by also changing the framing symbols $\Ptwist$ and $\Ntwist$ in the corresponding diagrams accordingly.

\begin{prop} \label{prop:symm}
The normalized Alexander polynomial $\Delta_{(\mathbb{G}, c)}(t)$ of a framed trivalent graph with positive coloring satisfies the following symmetric properties.
\begin{enumerate}
\item 
$\Delta_{(\mathbb{G}, c)}(t)$=$\Delta_{(\mathbb{G}^*, c)}(t^{-1})$.
\item 
$\Delta_{(\mathbb{G}, c)}(t)$=$\Delta_{(-\mathbb{G}, c)}(t)$.
\end{enumerate}
In particular, a plane graph $\mathbb{G}$ with the blackboard framing satisfies the symmetric relation
$$ \Delta_{(\mathbb{G}, c)}(t)=\Delta_{(\mathbb{G}, c)}(t^{-1}).$$

\end{prop}

\begin{proof}
(i) Given a framed diagram $\mathbb{D}$ of $\mathbb{G}$, define $$\tilde{\Delta}_{(\mathbb{D}, c)}(t) :=\frac{\Delta_{(\mathbb{D}, c)}(t)+\Delta_{(\mathbb{D}^*, c)}(t^{-1})}{2}.$$  One can check that $\tilde{\Delta}$ satisfy all the MOY-type relations in Theorem \ref{moytheo}. On the other hand, we know from Theorem \ref{theorem:axiom} that those relations completely characterize the Alexander polynomial.  In other words,  $\tilde{\Delta}_{(\mathbb{D}, c)}(t)=\Delta_{(\mathbb{D}, c)}(t)$. Hence, $\Delta_{(\mathbb{D}, c)}(t)$=$\Delta_{(\mathbb{D}^*, c)}(t^{-1})$.

(ii) We can argue in a similar way.  This time let $$\tilde{\Delta}_{(\mathbb{D}, c)}(t) :=\frac{\Delta_{(\mathbb{D}, c)}(t)+\Delta_{(-\mathbb{D}, c)}(t)}{2},$$ and we are going to check that $\tilde{\Delta}$ satisfy all the MOY-type relations in Theorem \ref{moytheo}. Note that all relations except (vii) and (ix) are obviously true with the oppositely oriented edges, so it suffices for us to show that (vii) and (ix) also hold after the reverse of orientations.  More precisely, we need to verify the relations
\begin{align*}
& \left(\begin{tikzpicture}[baseline=-0.65ex, thick, scale=1.2]
\draw (0,-1) [<-] to [out=90,in=180] (0.5,-0.5);
\draw (0.5, -0.5) [-<-] to [out=0,in=180] (1,-0.5) ;
\draw (1, -0.5) [-<-] to [out=0, in=90] (1.5,-1);
\draw (0.5,-0.5) [->-] to [out=180, in=270] (0,0) to [out=90,in=180] (0.5,0.5);
\draw (1,-0.5) [-<-] to [out=0, in=270] (1.5,0) to [out=90,in=0] (1,0.5);
\draw (0,1) [->-] to [out=270,in=180] (0.5,0.5);
\draw (0.5, 0.5) [->-] to [out=0,in=180] (1,0.5);
\draw (1, 0.5) [->]  to [out=0, in=270] (1.5,1);
\draw (-0.25, -0.75) node {$j$};
\draw (1.75, -0.75) node {$i$};
\draw (-0.25, 0) node {$i$};
\draw (-0.25, 0.75) node {$j$};
\draw (1.75, 0.75) node {$i$};
\draw (1.75, 0) node {$j$};
\draw (0.75,-0.75) node {$i+j$};
\draw (0.75,0.75) node {$i+j$};
\draw (0.5, -0.5) node[circle,fill,inner sep=1pt]{};
\draw (1, -0.5) node[circle,fill,inner sep=1pt]{};
\draw (0.5, 0.5) node[circle,fill,inner sep=1pt]{};
\draw (1, 0.5) node[circle,fill,inner sep=1pt]{};
\end{tikzpicture}\right)
=\frac{[i+j]^3}{[i]}\cdot
\left(\begin{tikzpicture}[baseline=-0.65ex, thick, scale=1.2]
\draw (0,-1) [-<-] to [out=90, in=180] (0.5,-0.5);
\draw (0.5, -0.5)  to [out=0,in=180] (1,-0.5) [-<-]  to [out=0,in=90] (1.5,-1);
\draw (0.5, 0.5) [out=180, in=270]   [<-] to (0,1);
\draw (0.5,0.5) to [out=0,in=180] (1,0.5) [->] to [out=0,in=270] (1.5,1);
\draw (0.75,-0.5) to [out=180, in=270] (0.25,0) [->-] to [out=90,in=180] (0.75,0.5);
\draw (-0.25, -0.75) node {$j$};
\draw (1.75, -0.75) node {$i$};
\draw (1, 0) node {$i-j$};
\draw (-0.25, 0.75) node {$j$};
\draw (1.75, 0.75) node {$i$};
\draw (0.75, -0.5) node[circle,fill,inner sep=1pt]{};
\draw (0.75, 0.5) node[circle,fill,inner sep=1pt]{};
\end{tikzpicture}\right)
+[j]^2\,[i+j]^2\cdot
\left(\begin{tikzpicture}[baseline=-0.65ex, thick, scale=1.2]
\draw (0,-1) [<-] to (0,1);
\draw (1,-1) [->] to (1,1);
\draw (-0.25,0) node {$j$};
\draw (1.25, 0) node {$i$};
\end{tikzpicture}\right), 
\end{align*}
and
\begin{align*}
& \left(\begin{tikzpicture}[baseline=-0.65ex, thick, scale=1.2]
\draw (0,-1) [<-] to (0, -0.33);
\draw (0, -0.33) [-<-] to (0, 0.33);
\draw (0, 0.33) [-<-] to (0,1);
\draw (2,-1) [<-] to (2, -0.66);
\draw (2, -0.66) [-<-] to (2, 0.66);
\draw (2, 0.66) [-<-] to (2,1);
\draw (0,-0.33) to [out=270,in=180] (0.5,-0.5) [->-] to [out=0,in=180] (1.5,-0.5) to [out=0, in=90] (2,-0.66);
\draw (0,0.33) to [out=90, in=180] (0.5,0.5) [-<-] to [out=0,in=180] (1.5,0.5) to [out=0, in=270] (2,0.66);
\draw (0,-1.25) node {$i$};
\draw (2,-1.25) node {$j$};
\draw (0.5,0) node {$i+k$};
\draw (0.1,1.25) node {$i+k-l$};
\draw (1, -0.8) node {$k$};
\draw (1.5,0) node {$j-k$};
\draw (1, 0.8) node {$l$};
\draw (2,1.25) node {$j+l-k$};
\draw (0, -0.33) node[circle,fill,inner sep=1pt]{};
\draw (0, 0.33) node[circle,fill,inner sep=1pt]{};
\draw (2, -0.66) node[circle,fill,inner sep=1pt]{};
\draw (2, 0.66) node[circle,fill,inner sep=1pt]{};
\end{tikzpicture}\right)
= \frac{ [j]\,[l]\,[i+k]}{ [i+j]}\cdot
\left(
\begin{tikzpicture}[baseline=-0.65ex, thick, scale=1.2]
\draw (0,-1) [<-] to [out=90,in=270] (0.5,-0.33);
\draw (0.5, -0.33) [-<-] to [out=90,in=270] (0.5,0.33);
\draw (0.5, 0.33) [-<-] to [out=90,in=270] (0,1);
\draw (1,-1) [<-] to [out=90,in=270] (0.5,-0.33);
\draw (1,1) [->-] to [out=270,in=90] (0.5,0.33);
\draw (0, -1.25) node {$i$};
\draw (0.9,-1.25) node {$j$};
\draw (-0.5,1.25) node {$i+k-l$};
\draw (1.5,1.25) node {$j+l-k$};
\draw (1.2,0) node {$i+j$};
\draw (0.5, -0.33) node[circle,fill,inner sep=1pt]{};
\draw (0.5, 0.33) node[circle,fill,inner sep=1pt]{};
\end{tikzpicture}\right)\\
\quad & \hspace{4cm} + [i+k]\,[j-k]\cdot
\left(\begin{tikzpicture}[baseline=-0.65ex, thick, scale=1.2]
\draw (0,-1) [<-] to (0, 0.33);
\draw (0, 0.33) [-<-] to (0,1);
\draw (1,-1) [<-] to (1, -0.33);
\draw (1, -0.33) [-<-] to (1,1);
\draw (0,0.33) [->-]  to [out=270,in=180] (0.5,0) to [out=0,in=90] (1,-0.33);
\draw (-0.5,1.25) node {$i+k-l$};
\draw (0,-1.25) node {$i$};
\draw (1.5,1.25) node {$j+l-k$};
\draw (1,-1.25) node {$j$};
\draw (0.5,0.3) node {$k-l$};
\draw (1, -0.33) node[circle,fill,inner sep=1pt]{};
\draw (0, 0.33) node[circle,fill,inner sep=1pt]{};
\end{tikzpicture}\right).
\end{align*}
As both identities can be proved in the same way as the relations in Theorem \ref{moytheo}, we omit the details here.

On the other hand, we know from Theorem \ref{theorem:axiom} that those relations completely characterize the Alexander polynomial.  In other words,  $\tilde{\Delta}_{(\mathbb{D}, c)}(t)=\Delta_{(\mathbb{D}, c)}(t)$.  Hence, $\Delta_{(\mathbb{D}, c)}(t)$=$\Delta_{(-\mathbb{D}, c)}(t)$.

\end{proof}

\subsection{Integrality}

Recall from Definition \ref{Alex} that the Alexander polynomial $\Delta_{(G, c)}(t)$ equals $\frac{\langle D, c \rangle }{(t^{-1/2}-t^{1/2})^{\vert V \vert-1}}$ for any diagram $D$ of $G$.  A priori, $\Delta_{(G, c)}(t)$ is only a rational function of the variable $t^{\pm \frac{1}{2}}$.  Our next proposition shows that $\Delta_{(G, c)}(t)$ is a genuine polynomial of $t^{\pm \frac{1}{2}}$ when $G$ has at least one vertex.

\begin{prop}\label{prop:integrality}
Let $G$ be an MOY graph with at least one vertex.  Then the Alexander polynomial $\Delta_{(G, c)}(t) \in \mathbb{Z}[t^{\pm \frac{1}{2}}]$.

\end{prop}

\begin{proof}
Recall that $\langle D ,c \rangle=\vert \delta \vert^{-1}\sum_{s\in S(D, \delta)} M(s)\cdot A(s)$, where $A(s)=\prod_{p=1}^{N}A_{C_p}^{s(C_{p})}$ is defined by the local contribution as exhibited in Fig. \ref{fig:e1} (bottom).  Our key observation is that the contribution $A_{C_p}^{s(C_{p})}$ for a state $s$ at the crossing $C_p$ of type \diaCircle assigning $s(C_{p})$ the north corner inside the circular region is equal to $t^{-i/2}-t^{i/2}$ for some $i$, which is a factor divisible by $t^{-1/2}-t^{1/2}$.

To compute $\langle D , c\rangle$ for the graph with $|V| \geq 1$ vertices, we place the base point $\delta$ on an edge $e_1$ of color $i_1$ just before it goes into a vertex $v_1$.  Then, all states in this decorated diagram must assign the north corner inside the circular region corresponding to $v_1$, which contributes a factor $A_{C_1}^{\triangle}=t^{-i_1/2}-t^{i_1/2}$.  As $$|\delta|=t^{n-i_1}-t^n=t^{n-i_1/2}\cdot (t^{-i_1/2}-t^{i_1/2})$$ for some $n$,  the factor $|\delta|$ is cancelled by $A_{C_1}^{\triangle}$.  In addition, note that $A(s)$ of each state $s$ consists of a total of $|V|-1$ additional factors of the form $t^{-i/2}-t^{i/2}$, each of which corresponds to the contribution of $A_{C_p}^{s(C_{p})}$ coming from the remaining $|V|-1$ circular regions.  Consequently, $\frac{A(s)}{\vert \delta \vert (t^{-1/2}-t^{1/2})^{\vert V \vert-1}} \in \mathbb{Z}[t^{\pm \frac{1}{2}}]$ for all states $s$, and so is the Alexander polynomial $\Delta_{(G, c)}(t)$.

\end{proof}

\subsection{Positivity}

One of the fundamental problems in classical knot theory is to determine whether a given diagram is a projection of the unknot. The Alexander polynomial of knots gives a simple primary test for the triviality of knots.  Analogously, an interesting problem in spatial graph theory is to determine whether a given diagram is isotopic to a plane graph diagram.  We give in this section a necessary condition in terms of the Alexander polynomial of graphs.

When $D$ is a plane graph diagram, we first reformulate the state sum formula of $\Delta_{(D, c)}(t)$.  To this end, we have only the crossings of type \diaCircle and we define a new local contribution $P_{C_p}^{\triangle}$ as in Fig. \ref{fig:f2}.

\begin{figure}[h!]
\begin{tikzpicture}[baseline=-0.65ex, thick, scale=0.9]
\draw (0, 0.5) ellipse (1.5cm and 0.8cm);
\draw (0,-2) [->-] to (0,-0.3);
\draw (-0.5, -0.6) node {$t^{-i/2}$};
\draw (0.5, -0.6) node {$t^{i/2}$};
\draw (0, 0.2) node {$[i]$};
\draw (0.3, -1.5) node {$i$};
\end{tikzpicture}
\hspace{2cm}
\begin{tikzpicture}[baseline=-0.65ex, thick, scale=0.9]
\draw (0, 0.5) ellipse (1.5cm and 0.8cm);
\draw (0,-2) [->-] to (0,-0.3);
\draw (-0.7, -0.8) node {$n$};
\draw (1, -0.8) node {$n-i$};
\draw (0, 0.2) node {$t^{-n+i_1/2}$};
\draw (0.4, -1.3) node {$i_1$};
\draw (0, -1.7) node {$*$};
\draw (-0.3, -1.7) node {$\delta$};
\end{tikzpicture}
	\caption{The local contributions $P_{C_p}^{\triangle}$ for crossings that correspond to generic edges (left) and the marked edge (right), respectively.}
	\label{fig:f2}
\end{figure}

For each state $s$, let $P(s)= \prod_{p=1}^{N} P_{C_p}^{s(C_p)}$.  Note that for the crossing $C_1$ corresponding to the edge $e_1$ where $\delta$ lies,
$$P_{C_1}^{s(C_1)}=t^{-n+i_1/2}=   \frac{t^{-i_1/2}-t^{i_1/2}}  {t^{n-i_1}-t^n}= \frac{ M_{C_1}^{s(C_1)}\cdot A_{C_1}^{s(C_1)}  }{\vert \delta \vert  } ;                 $$
and for the north corner $s(C_p)$ of a generic crossing $C_p$,
$$P_{C_p}^{s(C_p)}=[i]=\frac{t^{-i/2}-t^{i/2}}{t^{-1/2}-t^{1/2}}=\frac{1}{t^{-1/2}-t^{1/2}} \cdot M_{C_p}^{s(C_p)}\cdot A_{C_p}^{s(C_p)}.$$
The proof of Proposition \ref{prop:integrality} shows that
$$\sum_{s\in S(D, \delta)}P(s)=  \vert \delta \vert^{-1} \frac{\sum_{s\in S(D, \delta)} M(s)\cdot A(s) }{(t^{-1/2}-t^{1/2})^{\vert V \vert-1}}$$
Hence,
\begin{equation} \label{equation: planar}
\Delta_{(D, c)}(t)=\sum_{s\in S(D, \delta)}  P(s).
\end{equation}

Using this new version of the state sum formula for the Alexander polynomial, it is straightforward to see $\Delta_{(G, c)}(t) \in \mathbb{Z}[t^{\pm \frac{1}{2}}]$, but we can actually prove more.  Note that
$$[i]= \frac{t^{i/2}-t^{-i/2}}{t^{1/2}-t^{-1/2}}=t^{\frac{i-1}{2}}+\cdots +t^{\frac{1-i}{2}} $$ is a polynomial of positive coefficient when $i>0$; so are all the other local contribution $P_{C_p}^{\triangle}$. Thus, every term $P(s)$ is a polynomial of positive coefficient.  Summing up, we obtain the following positivity condition on the Alexander polynomial of a plane graph.

\begin{theo}\label{condition}
Suppose $G$ is isotopic to a plane MOY graph with at least one vertex.  Then $\Delta_{(G, c)}(t) \in \mathbb{Z}_{\geq 0}[t^{\pm \frac{1}{2}}]$.
\end{theo}

\begin{rem}
\rm
Note that the ambiguity of a power of $t$ in the definition of $\Delta_{(G, c)}(t)$ does not matter, since the sign of the coefficients is unchanged.

\end{rem}

\bigskip
\begin{ex}
\rm

We conclude this section with a detailed calculation of the graph $G$ exhibit in Fig. \ref{fig:f1}, whose undirected version is denoted $5_1$ in Litherland's table of $\theta$-curve diagrams \cite[Fig. 1]{MR2463504}. In this example, we show the non-planarity and chirality of this graph.

\begin{figure}[h!]
	\centering
		\includegraphics[width=0.3\textwidth]{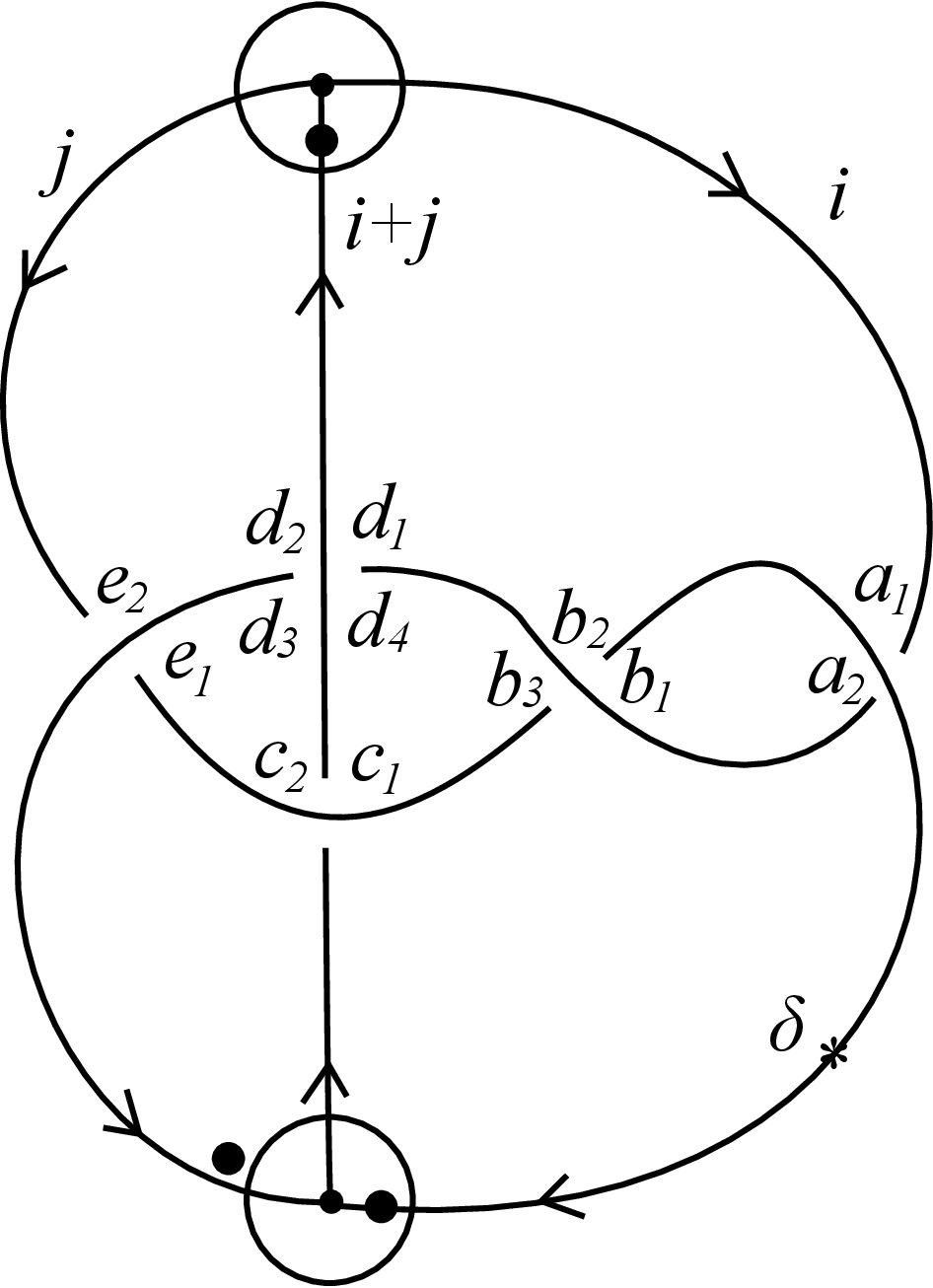}
	\caption{The $5_1$ in Litherland's table of $\theta$-curve diagrams.  Observe that any two of three curves of the graph form an unknot. With the given position of $\delta$, the local assignment of all states at the circular regions is unique and is marked out by big black dots. }
	\label{fig:f1}
\end{figure}

In the following calculation, we assign the balanced colors $i$, $j$ and $i+j$ and place the base point $\delta$ as in Fig. \ref{fig:f1}.  We want to compute $\Delta_{(G, c)}(t)$ using the state sum formula, so we begin by enumerating all Kauffman states in the given decorate diagram.  Note that the assignment at the 3 crossings of type \diaCircle is unique; thus, in order to specify the states, it is enough to describe the local assignment at the 5 crossings of types \diaCrossP and \diaCrossN.  By carefully exhausting all possibilities, we find 7 states in total.  We then compute the value of $\frac {M(s)\cdot A(s)} {\vert \delta \vert (t^{-1/2}-t^{1/2})^{\vert V \vert-1}}$ for each state $s$ and summarize the result in Table \ref{table2}.

\begin{table}[ht!]
  \begin{center}
  \setlength\extrarowheight{10pt}
\begin{tabular}{|c|c|}
  \hline
  state $s$ & Value of $\frac  {M(s)\cdot A(s)}        {\vert \delta \vert (t^{-1/2}-t^{1/2})^{\vert V \vert-1}}$      \\
  \hline
  $a_1b_1c_1d_2e_1$ & $t^{\frac{-3i-3j}{2}}\cdot[i+j]$ \\
  $a_1b_1c_1d_3e_2$ & $-t^{\frac{-i-3j}{2}} \cdot[i+j]$ \\
  $a_1b_1c_2d_4e_2$ & $t^{\frac{i-j}{2}} \cdot[i+j]$ \\
  $a_2b_2c_1d_2e_1$ & $-t^{\frac{-3i-j}{2}} \cdot[i+j]$ \\
  $a_2b_2c_1d_3e_2$ & $t^{\frac{-i-j}{2}} \cdot[i+j]$ \\
  $a_2b_2c_2d_4e_2$ & $-t^{\frac{+i+j}{2}} \cdot[i+j]$ \\
  $a_2b_3c_2d_1e_2$ & $t^{\frac{j-i}{2}} \cdot[i+j]$\\
  \hline
\end{tabular}
\end{center}
\bigskip
  \caption{All states and their values of $\frac  {M(s)\cdot A(s)}        {\vert \delta \vert (t^{-1/2}-t^{1/2})^{\vert V \vert-1}}$.   }
  \label{table2}
  \end{table}

Therefore,
\begin{align*}
\Delta_{(G, c)}(t)
&= \sum_{s\in S(D, \delta)}\frac  {M(s)\cdot A(s)}        {\vert \delta \vert (t^{-1/2}-t^{1/2})^{\vert V \vert-1}} \\
&= (t^{\frac{-3i-3j}{2}}-t^{\frac{-3i-j}{2}}-t^{\frac{-i-3j}{2}}+t^{\frac{-i-j}{2}}
+t^{\frac{i-j}{2}}+t^{\frac{j-i}{2}}-t^{\frac{i+j}{2}})\cdot[i+j]
\end{align*}

Suppose $i,j >0$.  Since $[i+j]=t^{\frac{i+j-1}{2}}+\cdots +t^{\frac{1-i-j}{2}}$, the term of the highest degree in $\Delta_{(G, c)}(t)$ is $-t^{-\frac{1}{2}+i+j}$, which has a negative coefficient $-1$.  By Theorem \ref{condition}, the given graph $5_1$ in Fig. \ref{fig:f1}  is not isotopic to any plane graph.

Furthermore, if we set the color $i=j=1$, the Alexander polynomial
$$\Delta_{(G, c)}(t) = (t^{-3}-2t^{-2}+t^{-1}+2-t)\cdot (t^{\frac{1}{2}}+t^{-\frac{1}{2}}).$$
As $\Delta_{(G, c)}(t) \neq \Delta_{(G, c)}(t^{-1})\cdot t^k$ for any $k$, Proposition \ref{prop:symm} in fact implies the stronger result that $5_1$ is \emph{chiral}, i.e., the graph is not isotopic to its mirror.

\end{ex}

\subsection{Non-vanishing properties}

In this section, our main goal is to establish the non-vanishing property for the Alexander polynomial of a connected trivalent plane graph with a positive coloring.  One should compare it with Kronheimer-Mrowka \cite[Theorem 1.1]{MR3880205}, although it is at present unclear to the authors about the connection of the Alexander polynomial to Tait colorings.

\begin{theo}\label{Nonvanishing}
Suppose $G$ is a connected trivalent plane graph with a positive coloring $c$.  Then $\Delta_{(G,c)}(t)\neq 0$.
\end{theo}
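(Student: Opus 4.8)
The plan is to reduce the statement to the existence of a single Kauffman state and then to produce such a state from the connectivity of $G$. Since $G$ is planar and $\Delta_{(G,c)}(t)$ is an invariant, I would compute it from a planar diagram $D$ of $G$, for which the only crossings are those of type \diaCircle\ and the framing factor $\mathcal{F}(D)$ is a single monomial. By Equation~(\ref{equation: planar}), $\Delta_{(D,c)}(t)=\mathcal{F}(D)\,\mathcal{C}(D,c)\sum_{s\in S(D,\delta)}P(s)$, where $\mathcal{F}(D)$ and $\mathcal{C}(D,c)$ are nonzero monomials in $t^{\pm 1/4}$. As recorded in the proof of Theorem~\ref{theo:positivity}, each local contribution $P_{C_p}^{\triangle}$ is a Laurent polynomial with strictly positive coefficients (a factor $[i]$ with $i>0$ for a north corner, and a positive monomial otherwise), so every product $P(s)$ has strictly positive coefficients. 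Consequently no cancellation can occur among the summands, and $\Delta_{(D,c)}(t)=0$ if and only if the state set $S(D,\delta)$ is empty. The theorem therefore follows once I show that a connected planar trivalent graph diagram admits at least one Kauffman state.

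To produce a state, recall from Lemma~\ref{reandcr} that connectivity gives $|\operatorname{Re}(D)|=|\operatorname{Cr}(D)|+2$, so after deleting the two regions $R_u,R_v$ adjacent to $\delta$ a Kauffman state (Definition~\ref{def:Kauffmanstate}) is exactly a perfect matching in the bipartite incidence graph whose two sides are $\operatorname{Cr}(D)$ and $\operatorname{Re}(D)\backslash\{R_u,R_v\}$, with a crossing joined to a region whenever one of its corners lies in that region. I would establish the existence of such a matching by Hall's marriage theorem, mirroring Kauffman's existence argument for connected link diagrams in \cite{MR712133}. The positivity of the coloring guarantees, through the balance condition, that $D$ has no embedded bridge, and together with the connectivity of $D$ this is precisely the input needed to verify Hall's condition.

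The main obstacle is the verification of Hall's condition, namely that every set $W$ of crossings is incident to at least $|W|$ of the unmarked regions. I would argue by contradiction: a violating set $W$ with $|N(W)|<|W|$ concentrates the corners of many crossings into few regions, and a double count of corner--region incidences in the planar picture around $W$ — equivalently, an Euler-characteristic computation on the subsurface of $S^2$ swept out by $W$ together with its incident regions — would force either a separating simple closed curve meeting $D$ in too few points (contradicting connectivity) or a generic point of $D$ adjacent to a single region, i.e.\ an embedded bridge, which is excluded since $c$ is positive. An alternative and possibly cleaner route is to set up the standard bijection between Kauffman states and the spanning trees of one of the checkerboard graphs of $D$ and then invoke connectivity of that graph directly. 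Either way, the translation of the topological connectivity of $D$ into the combinatorial matching condition is the crux; once a state is known to exist, the non-vanishing of $\Delta_{(G,c)}(t)$ is immediate from the positivity established above.
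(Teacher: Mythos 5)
Your first paragraph is correct and coincides with the paper's own reduction (Lemma \ref{LemmaOne}): by Equation (\ref{equation: planar}) and the positivity of each $P(s)$, the vanishing of $\Delta_{(G,c)}(t)$ is equivalent to $S(D,\delta)=\emptyset$. (One small caveat: the vertexless case $|V|=0$, i.e.\ the unknot, has no states at all and is handled separately in the paper via relation (i) of Theorem \ref{moytheo}; your reduction as stated does not cover it.) The genuine gap is in the second half: the existence of a Kauffman state is the entire content of the theorem, and you leave its proof as an unexecuted sketch. You assert that Hall's condition for the crossing--region incidence graph follows from ``connectivity plus no embedded bridge'' via an Euler-characteristic double count, but you never carry out that count, and the claimed inputs are not even the right ones. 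For instance, if some vertex had indegree $0$, its circle region would contain no corner whatsoever and Hall's condition would fail for $W=\operatorname{Cr}(D)$, even though the diagram could be connected and bridgeless; what actually rules this out is that a \emph{positive} balanced coloring at a \emph{trivalent} vertex forces indegree and outdegree both positive. More globally, the neighborhoods $N(W)$ depend on the orientation of the edges (circle regions receive corners only from \emph{entering} edges), so the verification of Hall's condition must engage with the orientation and the trivalent structure, which your sketch does not. Your proposed ``cleaner route'' via spanning trees of a checkerboard graph is the standard story for link universes, where the diagram is a $4$-valent graph and Kauffman's Clock Theorem applies; it does not transfer in any obvious way to diagrams with circle regions of types A and B, and you give no construction of the relevant bijection.

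By contrast, the paper proves $S(G,\delta)\neq\emptyset$ by a double induction that uses the trivalent structure and the positive coloring essentially: Lemma \ref{LemmaTwo} shows that resolving the local configuration of Fig.\ \ref{fig: SmoothOut} transports states of the resolved graph(s) back to states of $G$; Lemma \ref{LemmaThree} handles graphs colored in $\{1,2\}$ by induction on $|V|$ down to the theta-curve; and the general case inducts on the maximal color $m$, first using the MOY identity (\ref{RelationMaximalColor}) to arrange that every maximal-color edge sits in a $(1,m-1)$ configuration before resolving. If you want to pursue the Hall's-theorem route, you would need to actually prove the marriage condition for this oriented, two-type-of-region incidence structure --- that verification is likely to amount to rediscovering the paper's resolution argument in different clothing, and until it is done your proof is incomplete at its central step.
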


Recall that a \emph{bridge} is an edge of a graph whose removal increases its number of connected components.  Note that the color of a bridge has to be 0 for a balanced coloring. Hence, the graph $G$ that has a positive coloring in Theorem \ref{Nonvanishing} must not have a bridge.  In contrast, we claim that $\Delta_{(G,c)}(t)$ vanishes when $G$ is either not connected or contains a bridge.

\begin{prop}
Suppose $G$ is a plane graph that is either not connected or contains a bridge. Then $\Delta_{(G,c)}(t)=0$.
\end{prop}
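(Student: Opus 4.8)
The plan is to handle the two cases — $G$ disconnected, and $G$ containing an embedded bridge — by entirely different mechanisms, each reducing to a fact already established in the excerpt.

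First I would dispose of the disconnected case. By the invariance proved in Theorem \ref{framed} (and, for a general MOY graph, the well-definedness up to a unit discussed around Equation (\ref{def:AlexforMOYgraphs})), I may compute $\Delta_{(G,c)}(t)$ from any convenient diagram. Choosing a \emph{split} diagram $D$, with the components of $G$ drawn in disjoint disks, the convention fixed in Section 2.2 sets $\langle D\vert\delta\rangle=0$; equivalently, this is exactly relation (ii) of Theorem \ref{moytheo}. Hence $\Delta_{(G,c)}(t)=0$.

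For the embedded bridge case, I would first invoke the balance condition to conclude that the bridge $e$ carries color $0$: summing the vertex balance relations over all vertices on one side of the separating sphere, the interior edges cancel and the net weighted flow across the sphere vanishes, and since $e$ is the only arc meeting the sphere we get $c(e)=0$ (as recorded in the Remark above). The key idea is that relation (x) is precisely the MOY-type relation removing a $0$-colored edge between two trivalent vertices, and that deleting a bridge disconnects the graph. Concretely, using the separating sphere $S$ I would isotope $D$ so that one side $G_1$ of $G$ sits in a disk on the left, the other side $G_2$ in a disk on the right, and $e$ is the horizontal arc joining them. At the endpoint $v_1\in G_1$ the two non-bridge edges re-enter the left disk, and the balance condition forces them to share a common color $i$; symmetrically the two non-bridge edges at $v_2$ share a color $j$. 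The local picture around $e$ is then exactly the left-hand side of relation (x) (after allowing the orientation-reversal symmetry of Proposition \ref{prop:symm}). Applying (x) gives $\Delta_{(G,c)}(t)=[i]\,[j]\cdot\Delta_{(G',c)}(t)$, where $G'$ is obtained from $G$ by deleting $e$ together with $v_1,v_2$ and reconnecting each pair of severed legs into a single strand. Since $e$ was the only arc crossing $S$, the graph $G'$ is disconnected, so $\Delta_{(G',c)}(t)=0$ by the first case, and therefore $\Delta_{(G,c)}(t)=0$.

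The step I expect to be the main obstacle is the reduction of an arbitrary embedded bridge to the exact local configuration of relation (x): I must check that $S$ can be isotoped so that the two non-bridge legs at each endpoint lie on a common side, and that the induced colors and orientations genuinely match those in (x) once the symmetries of Proposition \ref{prop:symm} are accounted for. A subsidiary case arises when a leg adjacent to the bridge also has color $0$; such a leg is itself an embedded bridge, so this is resolved by inducting on the number of $0$-colored edges. Finally, for a genuinely higher-valence MOY graph, where relation (x) is not directly available, I would instead argue straight from the definition: the embedded bridge can be drawn with a point adjacent to a single region, and the resulting splitting of the state sum across $S$ forces $\langle D\vert\delta\rangle=0$.
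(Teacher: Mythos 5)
Your proposal is correct and follows essentially the same route as the paper: the disconnected case is exactly the convention of Section 2.2 (relation (ii) of Theorem \ref{moytheo}), and the bridge case uses the balance condition to force color $0$ on the bridge and then relation (x) to split the diagram into a disconnected one. The extra details you supply (isotoping the separating sphere, matching the local colors at the two trivalent endpoints, the degenerate subcases) are exactly the steps the paper leaves implicit.
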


\begin{proof}
Suppose $D$ is a diagram of $G$.  Remember that the state sum $\langle D ,c \rangle$ is defined to be $0$ when $D$ is not connected.  On the other hand, if $D$ has a bridge, then the color of the edge must be 0.  We can apply the relation (x) in Theorem \ref{moytheo} to obtain a disconnected diagram whose Alexander polynomial is $0$.

\end{proof}

The rest of the section is devoted to the proof of Theorem \ref{Nonvanishing}. When the vertex set $V$ of $G$ is empty, the graph $G$ is simply a trivial knot.  In that case, $\Delta_{(G,c)}(t)$ is non-zero by Theorem \ref{moytheo}(i).  From now on, let us assume that $D$ is a plane graph diagram of $G$ with at least one vertex with a positive coloring $c$, unless otherwise specified.

\begin{lemma}\label{LemmaOne}
Suppose $D$ is a trivalent plane graph diagram with a positive coloring $c$.  Then
$\Delta_{(D,c)}(t)\neq 0$ if and only if the set of states $S(D, \delta) \neq \emptyset$.
\end{lemma}

\begin{proof}
This follows from Equation (\ref{equation: planar}) and the observation that each term $P(s)$ is a polynomial of positive coefficients.
\end{proof}

Our plan is to prove $S(D, \delta) \neq \emptyset$ by induction.  Note that from Lemma \ref{LemmaOne}, the set $S(D, \delta) \neq \emptyset$ if and only if $S(D, \delta ') \neq \emptyset$ for any two points $\delta$ and $\delta '$.  Thus we have the freedom of placing the base point anywhere on $D$.  The next step is to relate the set $S(D, \delta)$ to the set $S(D', \delta)$ of a simpler graph diagram $D'$.  To this end, we start with a graph diagram $D$ that contains a local configuration as Fig. \ref{fig: SmoothOut} and resolve the vertices to get a new graph diagram $D'$.  Clearly, $D'$ is also a trivalent plane graph with a positive coloring.  In some cases, $D'$ may be disconnected.  Then decompose $D'=D_1'\sqcup D_2'$ so that the graphs $D_1'$ and $D_2'$ are connected.

\begin{figure}[h!]
	\centering
	\begin{equation*}D=
\begin{tikzpicture}[baseline=-0.65ex, thick]
\draw (0,-1) [->-] to [out=90,in=270] (0.5,-0.33);
\draw (0.5, -0.33) [->-] to [out=90,in=270] (0.5,0.33);
\draw (0.5, 0.33) [->] to [out=90,in=270] (0,1);
\draw (1,-1) [->-] to [out=90,in=270] (0.5,-0.33);
\draw (1,1) [<-] to [out=270,in=90] (0.5,0.33);
\draw (0, -1.25) node {$i$};
\draw (0.9,-1.25) node {$j$};
\draw (0,1.25) node {$i$};
\draw (1,1.25) node {$j$};
\draw (1.1,0) node {$i+j$};
\draw (0.5, -0.33) node[circle,fill,inner sep=1pt]{};
\draw (0.5, 0.33) node[circle,fill,inner sep=1pt]{};
\end{tikzpicture}
\xrightarrow{\text{resolve the vertices}}
\begin{tikzpicture}[baseline=-0.65ex, thick, scale=1.2]
\draw (0,-1) [->] to (0,1);
\draw (1,-1) [->] to (1,1);
\draw (-0.25,0) node {$i$};
\draw (1.25, 0) node {$j$};
\end{tikzpicture}=D'
\end{equation*}
	\caption{Resolve the vertices of a local configuration}
\label{fig: SmoothOut}
\end{figure}

\begin{lemma}\label{LemmaTwo}
For $D$, $D'$ as above, if $D'$ is connected, then $S(D', \delta) \neq \emptyset$ implies that $S(D, \delta) \neq \emptyset$; if $D'$ is disconnected, then $S(D'_i, \delta) \neq \emptyset$ for both $i=1,2$ implies that $S(D, \delta) \neq \emptyset$.

\end{lemma}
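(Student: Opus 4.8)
The plan is to deduce everything from Lemma \ref{LemmaOne}, which reduces the non-vanishing of $\Delta_{(G,c)}(t)$ to the non-emptiness of the state set $S(G,\delta)$, together with the fact (recorded just before this lemma) that whether $S(G,\delta)\neq\emptyset$ is independent of the placement of $\delta$. Thus it suffices to manufacture a single Kauffman state of $G$ out of the given state(s) of $G'$ (resp.\ of $G_1'$ and $G_2'$), and I am free to choose the most convenient initial points on all the diagrams involved. Throughout, I would compare the crossings and regions of $G$ with those of $G'$ only near the local configuration of Fig.~\ref{fig: SmoothOut}, using Lemma \ref{reandcr} to keep the counts of $|\operatorname{Cr}|$ and $|\operatorname{Re}|$ under control.

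First I would place the initial point $\delta$ of $G$ on the middle edge of the digon, which carries the positive color $i+j$ and is therefore admissible. With this choice the two regions adjacent to $\delta$ are exactly the two ``lune'' regions $L$ and $R$ on either side of the middle edge, so these are the marked regions $R_u,R_v$. Moreover the middle edge is the unique edge entering the top (fission) vertex, so precisely by the forcing argument used in Theorem \ref{moytheo}(i) and Proposition \ref{prop:integrality}, the crossing at that vertex is forced to its north corner inside the corresponding circular region. The bottom (fusion) vertex contributes two crossings of type \diaCircle, one for each incoming edge; since their inner corners lie in the now-marked regions $L$ and $R$, each may only be assigned the bottom circular region or the adjacent outer region. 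As the bottom circular region is a corner of no other crossing, exactly one of these two crossings is sent there and the other to an outer region, giving a clean dichotomy that drives the rest of the argument.

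Next I would treat the two cases separately. When $G'$ is connected, the crossings of $G$ other than the three digon crossings are in canonical bijection with $\operatorname{Cr}(G')$, and under the resolution the two lunes $L,R$ merge into a single region between the two resolved strands; the two possibilities above for the bottom crossings then correspond exactly to the two admissible placements of the initial point of $G'$ on the left strand (of color $i$) or the right strand (of color $j$), and in each case the remaining crossings of $G$ occupy precisely the regions left empty by a state of $G'$ with that initial point. Since $S(G',\delta')\neq\emptyset$ for every admissible $\delta'$, this produces a state of $G$. When $G'=G_1'\sqcup G_2'$ is disconnected, the resolution instead absorbs $L$ into a region of $G_1'$ and $R$ into a region of $G_2'$, while the outer region is common to both components; choosing the initial point of $G_i'$ so that its two marked regions are the former lune together with this common outer region, I would glue the given states $s_1,s_2$ to the forced top-vertex assignment and to the dichotomy for the bottom crossings (so that the bottom circular region and the common outer region are each filled once), and then check directly that the result is a bijection from all crossings of $G$ onto all of its unmarked regions, hence a state of $G$.

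The hard part will be the region bookkeeping in the disconnected case. The subtlety is that the move of Fig.~\ref{fig: SmoothOut} is an oriented smoothing and not a deletion of the middle edge, so one must verify carefully how the regions flanking the digon redistribute between $G_1'$ and $G_2'$: that the two lunes do \emph{not} merge but are separated into the two distinct components, and that the single shared outer region receives exactly one corner in the assembled state. Pinning down these counts, so that the glued assignment is a genuine bijection rather than merely an injection, is exactly where Lemma \ref{reandcr}, applied both to the connected diagram $G$ and to the two-component diagram $G_1'\sqcup G_2'$, carries the weight; once the counts are matched, the existence of the state and hence $S(G,\delta)\neq\emptyset$ follows.
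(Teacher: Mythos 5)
There is a genuine gap in the connected case, and it sits exactly where you anticipated trouble: the region bookkeeping for the resolution of Fig.~\ref{fig: SmoothOut}. Your claim that ``the two lunes $L,R$ merge into a single region between the two resolved strands'' is false. The left strand of $G'$ joins the $i$-colored input to the $i$-colored output and therefore separates the face $L$ from the space vacated by the middle edge; likewise the right strand separates $R$ from it. Hence $L$ and $R$ survive as two \emph{distinct} faces of $G'$, and the region that opens up between the two strands is instead the merger of the face $B$ below the fusion vertex with the face $T$ above the fission vertex (together with the two circle regions). This is forced by the count $\vert\operatorname{Re}\vert=\vert\operatorname{Cr}\vert+2$: three crossings disappear, the two circle regions disappear, so exactly one further identification of faces may occur, and since no edge of $G$ is disconnecting (positive colors), the only candidates are $B$ and $T$; indeed $G'$ is connected precisely when $B\neq T$.

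This sinks your connected case. With $\delta$ on the middle edge the marked regions of $G$ are $L$ and $R$; the top crossing is forced to its circle region, one bottom crossing to the bottom circle region and the other to $B$ (your dichotomy is correct); so the crossings outside the local picture must cover $T$ and must \emph{not} cover $L$ or $R$. A state of $G'$ with initial point on the left strand does the opposite: its marked regions are $L'$ and the merged region $B\cup T$, so its crossings cover $R'$ and place no corner in $B\cup T$. Gluing it to your local assignment therefore puts a corner in the marked region $R$ and leaves the unmarked region $T$ empty --- not a Kauffman state. (Your disconnected construction survives only because there $B=T$, so the problematic face is the one your local bottom crossing fills; it is the connected case, not the disconnected one, that breaks.) The repair --- which is in effect what the paper's splitting $S(G,\delta)=S_1\sqcup S_2$ in Fig.~\ref{fig:f3} implements --- is to place $\delta$ on one of the two edges \emph{leaving} the fission vertex, so that the marked regions of $G$ are $\{L,T\}$ (say) and those of $G'$ are $\{L',\,B\cup T\}$; then a state of $G'$ covers exactly $R$ plus the far regions, and the local assignment (top crossing to its circle, one bottom crossing to the bottom circle, the other to $B$) completes it to a state of $G$.
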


\begin{proof}
We split the set of states (if there are any) $S(D,\delta)=S_1 \sqcup S_2$ according to the local assignment shown in Fig. \ref{fig:f3}.

\begin{figure}
\begin{tikzpicture}[baseline=-0.65ex, thick, scale=0.8]
\draw (0, 1) ellipse (0.8cm and 0.5cm);
\draw (0, -1) ellipse (0.8cm and 0.5cm);
\draw (0,-1) [->-] to (0,1);
\draw (0.7, 0) node {$i+j$};
\draw (-1, -2)  [->-]  to (-0.3, -1.3);
\draw (-0.3, -1.3) to (0, -1);
\draw (1, -2)  [->-]  to (0.3, -1.3);
\draw (0.3, -1.3) to (0, -1);
\draw (1, 2) [<-] to (0, 1);
\draw (-1, 2) [<-] to (0, 1);
\draw (-1, -1.6) node {$i$};
\draw (1, -1.6) node {$j$};
\draw (-1, 1.6) node {$i$};
\draw (1, 1.6) node {$j$};
\draw (-0.7, 1.7) node {$*$};
\draw (-0.4, 1.9) node {$\delta$};
\draw (0, 1) node[circle,fill,inner sep=1pt]{};
\draw (0, -1) node[circle,fill,inner sep=1pt]{};
\draw (0.35, -1.6) node{$\bullet$};
\draw (0.3, -1.3) node {$\circ$};
\draw (-0.35, -1.6) node{$\circ$};
\draw (-0.3, -1.3) node {$\bullet$};
\draw (0, 0.65) node {$\bullet$};
\draw (0, -2.5) node {$S_2\subset S(D, \delta)$};
\end{tikzpicture}\,\,$\xleftrightarrow{\,1:1\,}$
\begin{tikzpicture}[baseline, thick, scale=0.4]
\draw (-1, 2) [<-]arc (110:250:2);
\draw (-5, 2) [<-]arc (70:-70:2);
\draw (-3.8, 1.9) node {$\delta$};
\draw (-4.4, 1.7) node {$*$};
\draw (-3.5, -3) node {$D': \text{ connected}$};
\end{tikzpicture} \quad\,\,
\begin{tikzpicture}[baseline=-0.65ex, thick, scale=0.8]
\draw (0, 1) ellipse (0.8cm and 0.5cm);
\draw (0, -1) ellipse (0.8cm and 0.5cm);
\draw (0,-1) [->-] to (0,1);
\draw (0.7, 0) node {$i+j$};
\draw (-1, -2)  [->-]  to (-0.3, -1.3);
\draw (-0.3, -1.3) to (0, -1);
\draw (1, -2)  [->-]  to (0.3, -1.3);
\draw (0.3, -1.3) to (0, -1);
\draw (1, 2) [<-] to (0, 1);
\draw (-1, 2) [<-] to (0, 1);
\draw (-1, -1.6) node {$i$};
\draw (1, -1.6) node {$j$};
\draw (-1, 1.6) node {$i$};
\draw (1, 1.6) node {$j$};
\draw (-0.7, 1.7) node {$*$};
\draw (-0.4, 1.9) node {$\delta$};
\draw (0, 1) node[circle,fill,inner sep=1pt]{};
\draw (0, -1) node[circle,fill,inner sep=1pt]{};
\draw (0.7, -1.5) node{$\bullet$};
\draw (-0.3, -1.3) node {$\bullet$};
\draw (0, 0.65) node {$\bullet$};
\draw (0, -2.5) node {$S_1\subset S(D, \delta)$};
\end{tikzpicture} \quad $\xleftrightarrow{\,1:1\,}$
\begin{tikzpicture}[baseline, thick, scale=0.4]
\draw (-1, 2) [<-]arc (110:250:2);
\draw (-5, 2) [<-]arc (70:-70:2);
\draw (-3.8, 1.9) node {$\delta$};
\draw (-4.4, 1.7) node {$*$};
\draw (-2.1, 1.9) node {$\delta$};
\draw (-1.5, 1.7) node {$*$};
\draw (-3.5, -3) node {$D': \text{disconnected}$};
\draw (-3.2, -4.5) node {$D'=D'_1\sqcup D'_2$};
\end{tikzpicture}
	\caption{Splitting the set of states $S(D,\delta)$ to $S_1$ and $S_2$.  Note that there are natural correspondences $S_1 \xleftrightarrow{1:1} S(D'_1, \delta)\times  S(D'_2, \delta)$ when $D'=D_1'\sqcup D_2'$, and $S_2 \xleftrightarrow{1:1} S(D', \delta)$ when $D'$ is connected.}
	\label{fig:f3}
\end{figure}

Suppose the resulted graph $D'$ from the above resolution is connected.  We claim that $|S_2|= |S(D',\delta)|$, because every state in $S(D',\delta)$ naturally extends to a state in $S_2$.  In particular, $S(D', \delta) \neq \emptyset$ implies that $S(D, \delta) \neq \emptyset$.

Otherwise, $D'$ is disconnected and assume that $D'=D_1'\sqcup D_2'$.  We claim that $|S_1|= |S(D_1',\delta)|\times |S(D_2',\delta)| $, because a pair of states in $S(D_i',\delta)$ can be naturally extended to a state in $S_1$. In particular, $S(D_i', \delta) \neq \emptyset$ for $i=1,2$ implies that $S(D, \delta) \neq \emptyset$.

\end{proof}

\begin{lemma} \label{LemmaThree}
Suppose $D$ is a connected trivalent plane graph diagram with at least one vertex, and there is a balanced color $c$ so that each edge is colored with $\{1, 2\}$.  Then $S(D, \delta) \neq \emptyset$.
\end{lemma}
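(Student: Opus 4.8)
The plan is to induct on the number $k$ of edges colored $2$, with Lemma~\ref{LemmaTwo} serving as the reduction step.

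First I would record the local structure forced by the hypotheses. Since $c$ is balanced, $G$ is trivalent, and every edge color lies in $\{1,2\}$, an elementary case analysis at a vertex $v$ shows that the only admissible splitting across the line $L_v$ is one color-$2$ edge on one side and two color-$1$ edges on the other; thus every vertex is incident to exactly one color-$2$ edge and two color-$1$ edges. It follows that $G$ has exactly $k=\tfrac12|V|$ edges colored $2$, and that each such edge joins two \emph{distinct} vertices, since a color-$2$ loop at a vertex would force its third incident edge to have color $0$, contradicting positivity. In particular, a regular neighborhood of any color-$2$ edge is precisely the local configuration of Fig.~\ref{fig: SmoothOut} with $i=j=1$: at its tail the two color-$1$ edges point inward, and at its head they point outward, so Lemma~\ref{LemmaTwo} always applies to it.

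With this in hand the induction is nearly mechanical. If $G$ has a color-$2$ edge $e$, I would resolve its two endpoints as in Fig.~\ref{fig: SmoothOut} to produce a graph $G'$ with $k-1$ edges colored $2$; because the resolution merely reconnects the four color-$1$ legs while respecting orientations and colors, $G'$ is again a planar trivalent graph with a balanced $\{1,2\}$-coloring. If $G'$ is connected and still contains a vertex, the inductive hypothesis gives $S(G',\delta)\neq\emptyset$, and Lemma~\ref{LemmaTwo} yields $S(G,\delta)\neq\emptyset$. If instead $G'=G_1'\sqcup G_2'$ is disconnected, then each $G_i'$ that contains a vertex is a connected planar trivalent graph with a $\{1,2\}$-coloring and strictly fewer color-$2$ edges, so the inductive hypothesis gives $S(G_i',\delta)\neq\emptyset$; invoking Lemma~\ref{LemmaTwo} once more gives $S(G,\delta)\neq\emptyset$. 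The vertex-free components that may appear are treated in the next paragraph.

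The only genuinely delicate point, and the place where I expect to spend the most care, is the degenerate end of the induction, where a component produced by the resolution has no vertices at all and is therefore a bare circle colored $1$. Such a component carries no crossings in $\operatorname{Cr}$, so $|\operatorname{Re}|=|\operatorname{Cr}|+2=2$; after deleting the two regions adjacent to $\delta$ there is nothing left to match, and the empty assignment is a valid Kauffman state. Hence these circle components also have nonempty state sets, which is exactly what is required to feed them into Lemma~\ref{LemmaTwo}. Granting this, the structural observation of the first paragraph guarantees that as long as a vertex remains there is always a color-$2$ edge available to resolve, so the induction never stalls and terminates with $S(G,\delta)\neq\emptyset$. (Combined with Lemma~\ref{LemmaOne} and Equation~(\ref{equation: planar}), this is precisely the input needed for the non-vanishing of $\Delta_{(G,c)}(t)$.)
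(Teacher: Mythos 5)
Your proof is correct and follows essentially the same route as the paper's: the paper inducts on $|V|$, resolves the local configuration of Fig.~\ref{fig: SmoothOut} with $i=j=1$, and invokes Lemma~\ref{LemmaTwo}, which is exactly your reduction step, and your induction on the number of $2$-colored edges is equivalent since (as you observe) each vertex meets exactly one such edge. The only substantive difference is that you explicitly treat the vertex-free circle components arising from the resolution (where the empty assignment is the unique Kauffman state), a degenerate case the paper's induction on $|V|\geq 2$ leaves implicit; this is a welcome extra bit of care rather than a divergence in method.
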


\begin{proof}
We prove by induction on the number of vertices $|V|$ of $D$.  Since $D$ is trivalent, $|V|$ is an even number.  The base case is $|V|=2$.  The trivial $\theta$-curve is the unique planar trivalent graph of 2 vertices.  In that case, $|S(D, \delta)|=1$.

Suppose that the statement is true for all $|V|\leq 2n$.  We now consider a graph diagram $D$ with $2n+2$ vertices.  Note that $D$ must contain a local configuration of Fig. \ref{fig: SmoothOut} with $i=j=1$, so we resolve the vertices and obtain $D'$.  Since $D'$ (or $D_1'$ and $D_2'$ in the case when $D'$ is disconnected) is a connected trivalent plane graph diagram with a strictly fewer number of vertices, we have the inductive hypothesis that $S(D',\delta) \neq \emptyset$ (resp. $S(D_i',\delta) \neq \emptyset$ for $i=1,2$ when $D'$ is disconnected).  Hence, Lemma \ref{LemmaTwo} implies that $S(D, \delta) \neq \emptyset$.
\end{proof}

\begin{proof}[Proof of Theorem \ref{Nonvanishing}]
We prove by induction on the maximal color $m$ on the graph $G$, which is the same type of argument we used in proving Theorem \ref{theorem:axiom}. When $m=1$, the vertex set $V$ is empty and we have already discussed the case.  When $m=2$, the statement follows from Lemma \ref{LemmaOne} and Lemma \ref{LemmaThree}.  Thus it suffices to consider $m\geq 3$.

Suppose now that $\Delta_{(D,c)}(t)\neq 0$, or equivalently, $S(D, \delta) \neq \emptyset$ for all graph diagram $D$ with maximal color $m-1$.  We want to prove the same thing for a graph diagram $D$ with maximal color $m$.

First consider the case when there is a unique edge on $D$ with the maximal color $m$.
From Equation (\ref{RelationMaximalColor}), we see that
\begin{equation*}
\left( \begin{tikzpicture}[baseline=-0.65ex, thick]
\draw (0,-1) [->-] to [out=90,in=270] (0.5,-0.33);
\draw (0.5, -0.33) [->-] to [out=90,in=270] (0.5,0.33);
\draw (0.5, 0.33) [->] to [out=90,in=270] (0,1);
\draw (1,-1) [->-] to [out=90,in=270] (0.5,-0.33);
\draw (1,1) [<-] to [out=270,in=90] (0.5,0.33);
\draw (0, -1.25) node {$j$};
\draw (0.9,-1.25) node {$m-j$};
\draw (0,1.25) node {$l$};
\draw (1,1.25) node {$m-l$};
\draw (1,0) node {$m$};
\draw (0.5, -0.33) node[circle,fill,inner sep=1pt]{};
\draw (0.5, 0.33) node[circle,fill,inner sep=1pt]{};
\end{tikzpicture}\right)
= \frac{1}{[j]\,[l]\,[m-1]^2}\left( \begin{tikzpicture}[baseline=-0.65ex,thick, scale=0.85]
\draw (0,-3) [->-] to (0, -2.7);
\draw (0, -2.7) to [out=90,in=270] (0,-2);
\draw (0, -2) [->-] to [out=90,in=270] (1,-0.5);
\draw (1, -0.5) [->-] to [out=90,in=270] (1,0.5);
\draw (1, 0.5) [->-] to [out=90,in=270] (0,2) to [out=90,in=270] (0,2.7);
\draw (0, 2.7) [->] to (0, 3);
\draw (0, -2.7) node[circle,fill,inner sep=1pt]{};
\draw (1, -0.5) node[circle,fill,inner sep=1pt]{};
\draw (1, 0.5) node[circle,fill,inner sep=1pt]{};
\draw (0, 2.7) node[circle,fill,inner sep=1pt]{};
\draw (2,-3) [->-] to [out=90,in=270](2,-2);
\draw (2, -2) [->-] to [out=90,in=270]  (1,-0.5);
\draw (2,3) [<-] to [out=270,in=90] (2,2);
\draw (2,2) [-<-] to [out=270,in=90] (1,0.5);
\draw (0,2.7) [-<-] to [out=270, in=180] (0.5,2.35) to [out=0,in=180] (1.5,2.35) to [out=0,in=90] (2,2);
\draw (0,-2.7) [->-] to [out=90,in=180] (0.5,-2.35) to [out=0,in=180] (1.5,-2.35) to [out=0,in=270] (2,-2);
\draw (2, -2) node[circle,fill,inner sep=1pt]{};
\draw (2, 2) node[circle,fill,inner sep=1pt]{};
\draw (0, -3.25) node {$j$};
\draw (2,-3.25) node {$m-j$};
\draw (0,3.25) node {$l$};
\draw (2,3.25) node {$m-l$};
\draw (1.6,0) node {$m$};
\draw (1,2.6) node {$l-1$};
\draw (1,-2.6) node {$j-1$};
\draw (-0.25,2) node {$1$};
\draw (-0.25,-2) node {$1$};
\draw (2.2,1.1) node {$m-1$};
\draw (2.2,-1.1) node {$m-1$};
\end{tikzpicture}\right).
\end{equation*}
Since the Alexander polynomial of the two diagrams are either simultaneously zero or simultaneously non-zero, it suffices to prove the non-vanishing property of a diagram $D$ on the right-hand side of the equation that contains the local configuration:
\begin{equation*}
\begin{tikzpicture}[baseline=-0.65ex, thick]
\draw (0,-1) [->-] to [out=90,in=270] (0.5,-0.33);
\draw (0.5, -0.33) [->-] to [out=90,in=270] (0.5,0.33);
\draw (0.5, 0.33) [->] to [out=90,in=270] (0,1);
\draw (1,-1) [->-] to [out=90,in=270] (0.5,-0.33);
\draw (1,1) [<-] to [out=270,in=90] (0.5,0.33);
\draw (0, -1.25) node {$1$};
\draw (0.9,-1.25) node {$m-1$};
\draw (0,1.25) node {$1$};
\draw (1,1.25) node {$m-1$};
\draw (1,0) node {$m$};
\draw (0.5, -0.33) node[circle,fill,inner sep=1pt]{};
\draw (0.5, 0.33) node[circle,fill,inner sep=1pt]{};
\end{tikzpicture}
\end{equation*}
Now, we can smooth the crossing and obtain $D'$ as in Fig. \ref{fig: SmoothOut}.  Since the maximal color of $D'$ (or $D_1'$ and $D_2'$ in the case when $D'$ is disconnected) is at most $m-1$, we have $S(D',\delta) \neq \emptyset$ (resp. $S(D_i',\delta) \neq \emptyset$ for $i=1,2$ when $D'$ is disconnected) by induction.  Hence, Lemma \ref{LemmaTwo} implies that $S(D, \delta) \neq \emptyset$, and Lemma \ref{LemmaOne} implies that $\Delta_{(D,c)}(t)\neq 0$.

In general, suppose $D$ has $k$ edges of the maximal color $m$.  We can apply another induction on $k$ and repeat the above argument to reduce $k$. This concludes the proof.

\end{proof}

\subsection{An intrinsic invariant}

So far, we have implicitly assumed \emph{embedded graphs} in this paper, i.e., they are the graphs that exist in a specific position.  In contrast, an \emph{abstract graph} is a graph that is considered to be independent of any particular embedding.  Whereas an embedded graph $G$ has a unique underlying abstract graph $g$, a given abstract graph $g$ can be typically embedded in many different ways and gives rise to distinct embedded graphs.  A property or invariant of a graph is called \emph{intrinsic} if it only depends on the underlying abstract graph (and is independent of the embedding).

Suppose $G$ is an MOY graph.  Recall that (\ref{Def:Alex}) defines $\Delta_{(G,c)}(t)$ up to a power of $t$; hence, $\Delta_{(G,c)}(1)$ is a well-defined invariant that is independent of the graph diagrams.  Our next result shows that $\Delta_{(G,c)}(1)$ is furthermore an intrinsic invariant of graphs. This may be viewed as a generalization of the classical result that the Alexander polynomial of a knot satisfies $\Delta_K(1)=1$.

\begin{prop}\label{intrinsic}
The value of the Alexander polynomial evaluated at $t=1$, $\Delta_{(G,c)}(1)$, is an integer-valued invariant of the underlying abstract MOY graph $g$.

\end{prop}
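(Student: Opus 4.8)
The plan is to reduce the statement to a single local invariance property: that the value $\Delta_{(G,c)}(1)$ is unchanged by a \emph{crossing change}. Two ingredients make this reduction work. First, by the integrality argument of Proposition \ref{prop:integrality} (with the framing factor $\mathcal{F}$ absent one in fact gets $\Delta_{(G,c)}(t)\in\mathbb{Z}[t^{\pm 1/2}]$ up to a power of $t$), the invariant has no pole at $t=1$, so $\Delta_{(G,c)}(1)$ is a well-defined integer; and by the discussion preceding the proposition, together with Theorem \ref{invtheo} and the vanishing of the power-of-$t$ ambiguity at $t=1$, it is an ambient isotopy invariant of the embedded graph. Second, I will invoke the standard topological fact that any two embeddings of a fixed oriented abstract graph $g$ carrying the same balanced coloring $c$ are related by a finite sequence of ambient isotopies and crossing changes (the graph analogue of the fact that any two knots on a common underlying circle differ by crossing changes, proved by placing the vertices in a common reference position by isotopy and then correcting the edge arcs one at a time by crossing changes). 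Granting these, it suffices to prove that $\Delta_{(\cdot,c)}(1)$ is invariant under a crossing change, for then it depends only on $g$ and $c$.

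First I would carry out the crossing-change computation. Let $D_+$ and $D_-$ be MOY graph diagrams that agree everywhere except at one crossing, of colors $i$ and $j$, which is positive (\diaCrossP) in $D_+$ and negative (\diaCrossN) in $D_-$; let $R_1$ (the two-trivalent-vertex resolution, with connecting edge colored $|i-j|$) and $R_2$ (the four-valent smoothing) be its two MOY resolutions, which are common to $D_+$ and $D_-$. Applying the resolution formulas from the proof of Proposition \ref{initial} at the level of $\langle\,\cdot\mid\delta\rangle$ and renormalizing through (\ref{def:AlexforMOYgraphs}), while keeping track of the extra vertices of $R_1$ and $R_2$ and of the colored curliness, gives
\begin{align*}
\Delta_{(D_+,c)}(t) &= \frac{-t^{(i+j)/2}}{[i]\,[j]}\cdot\frac{\mathcal{C}(D_+,c)}{\mathcal{C}(R_1,c)}\,\Delta_{(R_1,c)}(t)+\frac{t^{j/2}}{[i]}\cdot\frac{\mathcal{C}(D_+,c)}{\mathcal{C}(R_2,c)}\,\Delta_{(R_2,c)}(t),\\
\Delta_{(D_-,c)}(t) &= \frac{-t^{-(i+j)/2}}{[i]\,[j]}\cdot\frac{\mathcal{C}(D_-,c)}{\mathcal{C}(R_1,c)}\,\Delta_{(R_1,c)}(t)+\frac{t^{-j/2}}{[i]}\cdot\frac{\mathcal{C}(D_-,c)}{\mathcal{C}(R_2,c)}\,\Delta_{(R_2,c)}(t)
\end{align*}
(for $j\ge i>0$; the case $i\ge j$ is symmetric, and when $i=j$ the connecting edge has color $0$). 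Since the oriented smoothing of a crossing ignores over/under information, $\mathcal{C}(D_+,c)=\mathcal{C}(D_-,c)$. As $t\to 1$ one has $[k]\to k$ and $\mathcal{C}\to 1$, so all coefficients are regular at $t=1$, all curliness ratios tend to $1$, and $\Delta_{(R_1,c)}(1),\Delta_{(R_2,c)}(1)$ are finite by the integrality recalled above. Evaluating at $t=1$, both right-hand sides collapse to the same expression $-\tfrac1{ij}\,\Delta_{(R_1,c)}(1)+\tfrac1i\,\Delta_{(R_2,c)}(1)$, so $\Delta_{(D_+,c)}(1)=\Delta_{(D_-,c)}(1)$, which is exactly crossing-change invariance.

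The hard part will not be this computation but the supporting topological input in the first paragraph: the assertion that any two embeddings of a fixed oriented abstract graph are connected by crossing changes and isotopies must be stated and justified with some care for spatial graphs, rather than merely cited. A secondary point requiring attention is uniformity — one must make sure a resolution formula of the above shape is available for every orientation and color configuration of a crossing that can occur in a closed MOY graph (the proof of Proposition \ref{initial} indicates these all hold), and that the vertex-count and curliness bookkeeping is carried out correctly in each case; but in every case the coefficients attached to $D_+$ and to $D_-$ differ only through the substitution $t\mapsto t^{-1}$ in the monomials $t^{\pm(i+j)/2},t^{\pm j/2}$ and through curliness ratios that all equal $1$ at $t=1$, so the two values necessarily agree at $t=1$. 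Finally, integrality ($\Delta_{(G,c)}(t)\in\mathbb{Z}[t^{\pm 1/2}]$ up to a power of $t$) shows $\Delta_{(G,c)}(1)\in\mathbb{Z}$, completing the claim that it is an integer-valued intrinsic invariant of $(g,c)$.
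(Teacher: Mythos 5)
Your overall strategy coincides with the paper's: both proofs reduce the statement to (a) the fact that any two embeddings of the same oriented abstract graph differ by ambient isotopies (the Reidemeister moves of Fig.~\ref{fig:e25}) together with crossing changes, and (b) invariance of $\Delta_{(\cdot,c)}(1)$ under a single crossing change. Where you diverge is in how (b) is proved. The paper's argument is a one-line local observation: the weights $M_{C_p}^{\triangle}$ and $A_{C_p}^{\triangle}$ of Fig.~\ref{fig:e1}, evaluated at $t=1$, literally do not distinguish a positive crossing from a negative one, so corresponding states of $D_+$ and $D_-$ contribute identically and the (renormalized) state sum is unchanged at $t=1$. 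You instead derive (b) from the skein-type resolution formulas of Proposition \ref{initial} / Theorem \ref{moytheo}(iv), observing that the coefficients attached to $D_+$ and $D_-$ differ only by $t\mapsto t^{-1}$ in monomial prefactors and hence coincide at $t=1$; your bookkeeping is correct (the coefficients $-t^{\pm(i+j)/2}/([i]\,[j])$ and $t^{\pm j/2}/[i]$ after the vertex-count renormalization with the four-valent smoothing are right, and the curliness ratios are powers of $t^{1/2}$, hence equal to $1$ at $t=1$). This route buys nothing extra and costs generality: the resolution relations are established in the paper only for positive colors (and in Proposition \ref{initial} only sketched, with the proof deferred to Section \ref{moysection}), whereas Proposition \ref{intrinsic} is asserted for an arbitrary balanced coloring, which may take zero or negative values on some edges. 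You rightly flag the two soft points yourself --- the ``isotopy plus crossing changes'' fact for spatial graphs, which the paper also uses without proof, and the uniformity of the resolution formulas over all crossing configurations --- but to make your version complete you would need either to restrict to positive colorings or to supply the missing resolution identities in general; alternatively, replacing the skein computation by the direct inspection of Fig.~\ref{fig:e1} at $t=1$ makes the argument both shorter and fully general, which is exactly what the paper does.
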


\begin{proof}

Note that the local contributions $M_{C_p}^{\triangle}$ and $A_{C_p}^{\triangle}$ evaluated at $t=1$ is invariant under crossing changes (Compare the first and second column of Fig. ~\ref{fig:e1}).  Since any other embedded graphs $G'$ with the same underlying abstract graph $g$ can be obtained from $G$ via the Reidemeister moves in Fig. ~\ref{fig:e25} and crossing changes, it follows that $\Delta_{(G,c)}(1)=\Delta_{(G',c)}(1)$; in other words, it is indeed an intrinsic invariant of the abstract MOY graph $g$.

\end{proof}

In light of Proposition \ref{intrinsic}, we propose to define an \emph{Alexander invariant} for an abstract MOY graph $g$ by letting
\begin{equation}
\Delta_{(g,c)}:= \Delta_{(G,c)}(1),
\end{equation}
where $G$ is an arbitrary MOY embedding of $g$.

\newpage


\appendix
\section{Replacing $\mathrm{sign}(s)\cdot m(s)$ with $M(s)$  }

The goal of this appendix is to show Proposition \ref{equivalence}.  To this end, it suffices to prove the following Proposition. Let $(D, \delta)$ be a decorated MOY graph diagram. For any given total orders on $\mathrm{Cr(D)}$ and  $\mathrm{Re(D)}$, recall that $\mathrm{sign}(s)$ is defined to be the sign of the state $s\in S(D, \delta)$ as a permutation with respect to the given orders.

\begin{prop}
\label{sign}
Let $(D, \delta)$ be a decorated MOY graph diagram, we have
\begin{eqnarray}
\label{appen}
\frac{\mathrm{sign}(s_{1})}{\mathrm{sign}(s_{2})}=\frac{M(s_{1})m(s_{1})}{M(s_{2})m(s_{2})},
\end{eqnarray}
for any two states $s_{1}, s_{2} \in S(D, \delta)$. Namely we have $\mathrm{sign}(s)=\prod_{p=1}^{N}\mathrm{sign}_{C_p}^{s(C_{p})}$ up to an overall sign change, where $\mathrm{sign}_{C_p}^{s(C_{p})}$ is the local contribution defined in Fig. \ref{fig:e7}.
\end{prop}
\begin{proof}
It is easy to see that any diagram $D$ can be transformed into a trivalent graph diagram by surgery ($\Lambda I$) or ($I\Lambda$) and surgery (I), which are defined in Fig. \ref{fig:e9} and \ref{fig:e8}. Therefore the proof is a combination of Lemmas \ref{trivalent}, \ref{I} and \ref{lambdaI}.
\end{proof}

\begin{figure}[h!]
\begin{tikzpicture}[baseline=-0.65ex, thick, scale=0.9]
\draw (-1,-1) [->] to (1,1);
\draw (1,-1) -- (0.2,-0.2);
\draw (-0.2,0.2) [->] to (-1,1);
\draw (0, 0.5) node {$1$};
\draw (0, -0.5) node {$-1$};
\draw (0.5, 0) node {$1$};
\draw (-0.6, 0) node {$1$};
\end{tikzpicture}\hspace{1.5cm}
\begin{tikzpicture}[baseline=-0.65ex, thick, scale=0.9]
\draw (1,-1) [->] to (-1,1);
\draw (-1,-1) -- (-0.2,-0.2);
\draw (0.2,0.2) [->] to (1,1);
\draw (0, 0.5) node {$1$};
\draw (0, -0.5) node {$-1$};
\draw (0.5, 0) node {$1$};
\draw (-0.5, 0) node {$1$};
\end{tikzpicture}
\hspace{1.5cm}
\begin{tikzpicture}[baseline=-0.65ex, thick, scale=0.9]
\draw (0, 0.5) ellipse (1.5cm and 0.8cm);
\draw (0,-1) [->-] to (0,-0.3);
\draw (-0.5, -0.6) node {$-1$};
\draw (0.4, -0.6) node {$1$};
\draw (0, 0) node {$1$};
\end{tikzpicture}
	\caption{The local contribution of $\mathrm{sign}_{C_p}^{\Delta}$.}
	\label{fig:e7}
\end{figure}

\begin{lemma}
\label{singular}
Let $D$ be a diagram of a singular link. Namely at each vertex, there are two edges pointing inwards and two outwards. Then the statement of Theorem \ref{sign} holds.
\end{lemma}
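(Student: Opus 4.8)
The plan is to follow Kauffman's clock-theorem argument \cite{MR712133}, since a diagram of a singular link is precisely the four-valent setting that Kauffman treats: every vertex has two edges in and two out, so apart from the circle crossings of type \diaCircle that surround the vertices, all crossings are the ordinary double points \diaCrossP and \diaCrossN. Writing $\mathrm{sign}_{C_p}^{\triangle}:=m_{C_p}^{\triangle}\,M_{C_p}^{\triangle}$ for the local sign displayed in Fig.~\ref{fig:e7}, we have $\prod_p \mathrm{sign}_{C_p}^{s(C_p)}=m(s)\,M(s)$, so the two formulations in Theorem~\ref{sign} are equivalent, and it suffices to prove that
$$f(s):=\mathrm{sign}(s)\cdot\prod_{p=1}^{N}\mathrm{sign}_{C_p}^{s(C_p)}\in\{+1,-1\}$$
is independent of the state $s\in S(D,\delta)$. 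Here, by the Leibniz expansion of $\det A(D)\backslash(u,v)$, the symbol $\mathrm{sign}(s)$ is genuinely the sign of $s$ as a bijection $\operatorname{Cr}(D)\to\operatorname{Re}(D)\setminus\{R_u,R_v\}$ relative to the chosen orderings; since those orderings are fixed once and for all, a change of orderings only rescales $f$ by a global sign, which is harmless.

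First I would invoke connectivity of the state set. For a singular link the elements of $S(D,\delta)$ are exactly Kauffman's states, and the Clock Theorem asserts that any two of them are joined by a finite sequence of elementary transposition (clock) moves, each of which alters the assignment at only two crossings $C,C'$: it exchanges their images between two regions that, together with $C$ and $C'$, bound a common local square, leaving all other assignments fixed. The circle crossings are absorbed into this framework by regarding each circle region as an ordinary region and each boundary intersection as an ordinary four-valent site with one of its four corners suppressed. It therefore suffices to check that $f$ is invariant under a single clock move.

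Next I would analyze one such move. Since it exchanges the images of $C$ and $C'$, the underlying permutation is modified by a single transposition, so $\mathrm{sign}(s')=-\,\mathrm{sign}(s)$. It then remains to verify that $\prod_p\mathrm{sign}_{C_p}^{s(C_p)}$ is also negated, i.e. that the product $\mathrm{sign}_{C}^{\triangle}\,\mathrm{sign}_{C'}^{\triangle'}$ changes sign when both corners rotate to their swapped positions. This is a finite local check: around the shared square each of $C,C'$ is of type \diaCrossP, \diaCrossN, or \diaCircle, and the two corners it may occupy sit in definite compass positions (north/south/east/west, or the three corners of a circle crossing). Reading off the tabulated values in Fig.~\ref{fig:e7} for each admissible pair of positions, one confirms case by case that the product of the two local signs is negated, which is exactly the compatibility built into the conventions of Fig.~\ref{fig:e7}. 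Consequently $f(s')=(-1)\cdot(-1)\cdot f(s)=f(s)$, and connectivity of $S(D,\delta)$ forces $f$ to be constant, establishing Theorem~\ref{sign} in the singular-link case.

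The hard part will be the local sign verification of the preceding paragraph: one must organize the compass positions of the two swapped corners at $C$ and $C'$ consistently with the orientation conventions of Fig.~\ref{fig:e7}, and handle the three-corner circle crossings, whose missing corner eliminates one subcase but requires care in matching the local square to the two remaining corners. The connectivity input, by contrast, is classical once the circle crossings have been folded into Kauffman's setup, so the essential content of the lemma is the sign bookkeeping across the finitely many local configurations.
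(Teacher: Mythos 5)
Your overall strategy (Clock Theorem plus a local sign check) is the same engine the paper uses, but the way you deploy the Clock Theorem contains a genuine gap. Kauffman's Clock Theorem establishes connectivity under transpositions for the states of a \emph{universe}, i.e.\ a connected four-valent link projection. The set $S(D,\delta)$ you work with is not the state set of a universe: each singular vertex carries a circle region $R_{i.5}$ and \emph{two} three-cornered crossings $C_i, C_{i.5}$ of type \diaCircle, so $S(D,\delta)$ has strictly more crossings and regions than the underlying projection, and your assertion that ``the elements of $S(D,\delta)$ are exactly Kauffman's states'' is false as stated. Declaring that the circle crossings can be ``absorbed'' into the four-valent framework and that connectivity is then ``classical'' is precisely the step that needs proof: the move exchanging which of $C_i, C_{i.5}$ occupies the bigon $R_{i.5}$ is not a clock move around a local square, and the Clock Theorem as cited does not cover this decorated object. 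You have also inverted where the work lies --- you call the local sign table the hard part and the connectivity input classical, whereas the unproved statement is the connectivity.

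The paper circumvents this entirely. It applies the Clock Theorem only to the honest universe $U_D$ obtained by forgetting the circle regions (so the sign of a state of $U_D$ is a product of local contributions, Fig.~\ref{fig:e12} left), and then builds an explicit correspondence $s \mapsto \tilde s$ from $S(D,\delta)$ to states of $U_D$: exactly one of $C_i, C_{i.5}$ must occupy $R_{i.5}$, and the other records a corner of the four-valent vertex of $U_D$. Comparing permutation signs across this correspondence gives $\mathrm{sign}(s) = \pm\,\mathrm{sign}(\tilde s)$ with the sign determined by which circle crossing sits in $R_{i.5}$, and one checks that this matches the local contributions of Fig.~\ref{fig:e12}. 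To salvage your route you would either have to prove connectivity of $S(D,\delta)$ under your generalized moves --- which in practice forces you to set up the same correspondence with $U_D$ --- or adopt the paper's reduction outright.
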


The Heegaard Floer homology and the Alexander polynomial of a singular link are studied in \cite{MR2529302}. Using statements there we can get Lemma \ref{singular}. However to keep the combinatorial flavor of the paper, we give a proof by recalling some facts in \cite{MR712133}.

A {\it universe} $U$ is a connected diagram of a link in $\mathbb{R}^{2}$ where the data of which strand is over and which is under at each crossing is suppressed. Choose a base point $\delta$ on $U$ and consider the set of states $S(U, \delta)$. The Clock Theorem in \cite{MR712133} says that any two states in $S(U, \delta)$ are connected by a sequence of clockwise or counterclockwise transpositions, which are defined in Fig. \ref{clock}.

\begin{figure}
\begin{tikzpicture}[baseline=-0.65ex, thick, scale=0.9]
\draw (-2.5, 0) -- (-0.5,0);
\draw [dotted](-0.5, 0) -- (0.5,0);
\draw (0.5, 0) -- (2.5,0);
\draw (-1.5, -1) -- (-1.5, 1);
\draw (1.5, -1)--(1.5, 1);
\draw (-1.3, 0.2) node {$\bullet$};
\draw (1.3, -0.2) node {$\bullet$};
\draw (5, 0.5) node {$\xrightarrow{\text{clockwise transposition}}$};
\draw (5, -0.5) node {$\xleftarrow{\text{counterclockwise transposition}}$};
\end{tikzpicture} \quad
\begin{tikzpicture}[baseline=-0.65ex, thick, scale=0.9]
\draw (-2.5, 0) -- (-0.5,0);
\draw [dotted](-0.5, 0) -- (0.5,0);
\draw (0.5, 0) -- (2.5,0);
\draw (-1.5, -1) -- (-1.5, 1);
\draw (1.5, -1)--(1.5, 1);
\draw (-1.3, -0.2) node {$\bullet$};
\draw (1.3, 0.2) node {$\bullet$};
\end{tikzpicture}
	\caption{Transpositions in the Clock Theorem.}
	\label{clock}
\end{figure}

Two states that are connected by a transposition satisfy \eqref{appen}. By the Clock Theorem, any two states in $S(U, \delta)$ satisfy \eqref{appen}. Therefore the sign of a state for a universe is given by the local contribution in Fig. \ref{fig:e7}.

\begin{proof}[Proof of Lemma \ref{singular}]
Since $D$ is a connected diagram of a singular link, we call the underlying universe $U_D$. The difference of $\mathrm{Cr}(D)$ and $\mathrm{Cr}(U_D)$ occurs around the singular crossings, and the same is true for $\mathrm{Re}(D)$ and $\mathrm{Re}(U_D)$.

Suppose $C_i$ is a crossing of $U_D$ which corresponds to a singular crossing of $D$.
There are two crossings in $\mathrm{Cr}(D)$ and one circle region in $\mathrm{Re}(D)$ around $C_i$, which we call $C_i, C_{i.5}$ and $R_{i.5}$, as in Fig. \ref{fig:e12}. We choose orders on $\mathrm{Cr}(D)$ and $\mathrm{Re} (D)$ so that
$$C_{i}< C_{i.5}<C_{i+1} \text{ and } R_i<R_{i.5}<R_{i+1}.$$
Then by forgetting $C_{i.5}$ and $R_{i.5}$ we get orders on $\mathrm{Cr} (U_D)$ and $\mathrm{Re} (U_D)$.

We see that any state $s$ in $S(D, \delta)$ corresponds to a state $\tilde{s}$ in $S(U_D, \delta)$ by ignoring the circle regions:
\begin{equation*}
\tilde{s}(C_i)=\begin{cases}
\text{souther corner} ,&  \,\,s(C_i)=\text{east corner} \lor s(C_{i.5})=\text{west corner} \\
\text{west corner}, &  \,\, s(C_i)=\text{west corner} \\
\text{east corner}, &  \,\, s(C_i)=\text{east corner}
\end{cases}
\end{equation*}

Considering the orders above, we have
\begin{equation*}
\mathrm{sign}(s)=\begin{cases}
\mathrm{sign}(\tilde{s}), &\text{ if $s(C_{i.5})=R_{i.5}$} \\
-\mathrm{sign}(\tilde{s}), &\text{ if $s(C_{i})=R_{i.5}$.}
\end{cases}
\end{equation*}
Up to an overall sign change, it is easy to check that the signs defined from the local contributions in Fig. ~\ref{fig:e12} satisfies the relation above.
\end{proof}

\begin{figure}
\begin{tikzpicture}[baseline=-0.65ex, thick, scale=1.2]
\draw (-1,-1) [->] to (1,1);
\draw (1,-1) [->] to (-1,1);
\draw (0, 0.5) node {$1$};
\draw (0, -0.5) node {$-1$};
\draw (0.5, 0) node {$1$};
\draw (-0.6, 0) node {$1$};
\draw (0, -1.9) node {$C_i\in \mathrm{Cr}(U_D)$};
\end{tikzpicture}
\hspace{1.5cm}
\begin{tikzpicture}[baseline=-0.65ex, thick, scale=1.2]
\draw (0, 0) ellipse (0.5cm and 0.5cm);
\draw (-1,-1) [->] to (1,1);
\draw (1,-1) [->] to (-1,1);
\draw (-0.7, -0.35) node {$\tiny{-1}$};
\draw (-0.4, -0.6) node {$\tiny{1}$};
\draw (0.3, -0.6) node {$\tiny{-1}$};
\draw (0.65, -0.35) node {$\tiny{1}$};
\draw (-0.2, -0.2) node {$\tiny{1}$};
\draw (0.2, -0.2) node {$\tiny{1}$};
\draw (-0.6, -0.9) node {$C_i$};
\draw (0.6, -0.9) node {$C_{i.5}$};
\draw (1, 0) node {$R_{i.5}$};
\draw (0.7, 0)[->] to (0.3, 0);
\draw (0, -1.7) node {$C_i, C_{i.5}\in \mathrm{Cr}(D)$};
\draw (0, -2.2) node {$R_{i.5}\in \mathrm{Re}(D)$};
\end{tikzpicture}
	\caption{The local contribution of sign for a universe $U_D$ (left) and a graph $D$ (right).}
	\label{fig:e12}
\end{figure}

\begin{lemma}
\label{trivalent}
When $D$ is a diagram of an oriented trivalent graph without sinks or sources, the statement of Theorem \ref{sign} holds.
\end{lemma}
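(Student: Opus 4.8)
The plan is to recast the identity of Theorem~\ref{sign} as a statement about how $\mathrm{sign}(s)$ changes as one passes between two states, and then to verify this change locally, importing as much as possible from the singular case of Lemma~\ref{singular}. Since both $\mathrm{sign}(s_1)/\mathrm{sign}(s_2)$ and the product $\prod_p \mathrm{sign}_{C_p}^{s_1(C_p)}/\mathrm{sign}_{C_p}^{s_2(C_p)}$ are multiplicative along any sequence of intermediate states, it suffices to verify the equality for two states $s_1, s_2$ that differ by a single \emph{elementary transposition}: they agree at every crossing except two crossings $C_p, C_{p'}$, each adjacent to a common pair of regions, whose assigned regions are interchanged (the analogue of Kauffman's clock moves in \cite{MR712133}). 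Under such a transposition the bijection $s$ is composed with a single transposition on the target, so $\mathrm{sign}(s_1)/\mathrm{sign}(s_2)=-1$; the content of the lemma is therefore the purely local assertion that the product of the two affected contributions $\mathrm{sign}_{C_p}^{\cdot}\,\mathrm{sign}_{C_{p'}}^{\cdot}$ of Fig.~\ref{fig:e7} also changes sign.

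For the local verification it is convenient to import as much as possible from the singular case. Every configuration of two genuine crossings (of type \diaCrossP or \diaCrossN) that occurs in a trivalent diagram already occurs in some singular link diagram, so the corresponding sign change is accounted for by Lemma~\ref{singular}; moreover the circle-crossing convention of Fig.~\ref{fig:e12} (right) established there is exactly the convention for the type \diaCircle crossings at a trivalent vertex. To make this transfer precise I would use the local move that contracts an edge joining a $2$-in--$1$-out vertex to an adjacent $1$-in--$2$-out vertex into a single singular ($2$-in--$2$-out) vertex, together with its inverse; near such an edge this identifies states, and their signs, with those of a singular diagram. That trivalent diagrams without sources or sinks contain equally many vertices of the two types -- from $2a+b=a+2b$ one gets $a=b$ -- makes these pairings plentiful and is what keeps the bookkeeping symmetric.

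The only local configurations not already covered in this way are those in which at least one of the two interchanged crossings is a circle crossing of type \diaCircle seated at a trivalent vertex. There are finitely many such configurations, distinguished by the valence split ($2$-in--$1$-out or $1$-in--$2$-out) and by which of the north, west, east corners are involved, and each is checked directly against Fig.~\ref{fig:e7}. The absence of sources and sinks guarantees that every circle region is well defined and that each vertex carries at least one incoming edge, hence at least one \diaCircle crossing, so the case analysis is finite and uniform.

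The main obstacle is twofold. First, one must know that the state space of a trivalent diagram is connected under elementary transpositions, so that checking single transpositions genuinely suffices; this is a clock-theorem analogue for diagrams whose vertices are trivalent rather than $4$-valent, and -- unlike the singular case -- one cannot simply forget the circle regions to obtain an underlying link universe. I would establish connectivity through the same edge-contraction correspondence, transporting Kauffman's Clock Theorem from the singular diagram and then accounting separately for the binary freedom at each circle region, exactly as in the proof of Lemma~\ref{singular}. Second, one must control the parity of the permutation relating the chosen orderings of $\operatorname{Cr}(D)$ and $\operatorname{Re}(D)$ on the two sides of each contraction; this tracking across the extra crossings and regions created by the circle regions is routine but is precisely where sign errors are most likely to appear, and so it is the step I would carry out with the greatest care.
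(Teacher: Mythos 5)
Your reduction to single elementary transpositions rests on a connectivity claim --- that any two Kauffman states of a trivalent graph diagram are joined by a chain of such transpositions --- and this is exactly the hard point; the route you sketch to it does not go through. Kauffman's Clock Theorem applies to link universes, i.e.\ $4$-valent projections, and Lemma \ref{singular} invokes it only after forgetting the circle regions of a \emph{singular} diagram. To transport it to the trivalent setting you propose contracting an edge joining a $2$-in--$1$-out vertex to an \emph{adjacent} $1$-in--$2$-out vertex; but in a general trivalent graph without sources or sinks the two types of vertices, though equinumerous, need not be adjacent, so the contraction is simply unavailable, and even where it is available you have not set up the correspondence of states (nor tracked the orderings of crossings and regions) across it. There is a second, independent gap: Lemma \ref{singular} is a global statement about the ratio $\mathrm{sign}(s_1)/\mathrm{sign}(s_2)$ for states of a single singular diagram, not a local statement about how the product of two entries of Fig.~\ref{fig:e7} changes under a transposition; to ``import'' it locally you would need to realize each two-crossing configuration, together with \emph{both} states being compared, inside some singular diagram, and this is not addressed.

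The paper's proof runs in the opposite direction and avoids both issues. Instead of contracting edges it \emph{adds} an oriented planar arc (move (X)) from a $2$-in--$1$-out vertex to a $1$-in--$2$-out vertex --- always possible since the two types occur in equal numbers and no adjacency is required --- turning the two trivalent vertices into singular ones at the cost of finitely many new crossings $C_1,\dots,C_k$ and split regions $R_p,R_{p.5}$ along the arc. It then exhibits an injective map $\phi\colon S(D_1,\delta)\to S(D_2,\delta)$ and verifies by a direct computation, with a judicious labelling of the new crossings and regions, that the ratio $\mathrm{sign}(s_1)/\mathrm{sign}(s_2)$ in $D_1$ equals the corresponding ratio of local products for $\phi(s_1),\phi(s_2)$ in $D_2$; iterating move (X) reduces everything to a singular link diagram, where the Clock Theorem is used exactly once, inside Lemma \ref{singular}. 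To salvage your approach you would first have to prove a clock-theorem analogue for trivalent graph diagrams with circle regions, which is a separate combinatorial statement that neither you nor the paper establishes.
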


To prove Lemma \ref{trivalent}, we separate the vertices of a trivalent graph into two groups. We call a vertex with indegree two and outdegree one an {\it even vertex}, and a vertex with indegree one and outdegree two an {\it odd vertex}. It is easy to see that the number of even vertices equals that of odd vertices. We use an oriented simple arc to connect an even vertex to an odd vertex, and call it surgery (X), which transforms two trivalent vertices into singular crossings.

\begin{figure}
\begin{tikzpicture}[baseline=-0.65ex, thick, scale=1]
\draw (-2.5, 2.5) ellipse (0.5cm and 0.5cm);
\draw (-3.5,1.5) [->-] to (-2.5,2.5);
\draw (-2.5,2.5) [->] to (-1.5,3.5);
\draw (-2.5,2.5) [->] to (-3.5,3.5);
\draw (0.5, -0.5) ellipse (0.5cm and 0.5cm);
\draw (-0.5,-1.5) [->-] to (0.5,-0.5);
\draw (0.5,-0.5) [->] to (1.5,0.5);
\draw (1.5,-1.5) [->-] to (0.5, -0.5);
\draw (-2.7, 0.7) -- (-0.7, 2.7);
\draw (-1.3, -0.7) -- (0.7, 1.3);
\draw (-2.5, 1.5) node{$R_1$};
\draw (-1.7, 0.7) node{$R_2$};
\draw (-0.5, -0.5) node{$R_k$};
\draw [dotted](-1.3, 0.3)--(-1, 0);
\draw (2.7, 1.5) node {$\xleftrightarrow{\,\text{surgery (X)}\,}$};
\end{tikzpicture} \quad \quad
\begin{tikzpicture}[baseline=-0.65ex, thick, scale=1]
\draw (-2.5, 2.5) ellipse (0.5cm and 0.5cm);
\draw (-3.5,1.5) [->-] to (-2.5,2.5);
\draw (-2.5,2.5) [->] to (-1.5,3.5);
\draw (-2.5,2.5) [->] to (-3.5,3.5);
\draw (0.5, -0.5) ellipse (0.5cm and 0.5cm);
\draw (-0.5,-1.5) [->-] to (0.5,-0.5);
\draw (0.5,-0.5) [->] to (1.5,0.5);
\draw (1.5,-1.5) [->-] to (0.5, -0.5);
\draw (-2.7, 0.7) -- (-0.7, 2.7);
\draw (-1.3, -0.7) -- (0.7, 1.3);
\draw (-2.8, 1.2) node{$R_1$};
\draw (-2, 0.4) node{$R_2$};
\draw (-0.8, -0.8) node{$R_k$};
\draw [dotted](-1.6, 0)--(-1.3, -0.3);
\draw [dotted](0.5,-0.5) [->-] to (-2.5, 2.5);
\draw (-2.2, 1.8) node{$C_1$};
\draw (-1.65, 1.25) node{$C_2$};
\draw (-0.3, -0.1) node{$C_k$};
\draw (-1, 3) node{$R_{1.5}$};
\draw (-0.2, 2.2) node{$R_{2.5}$};
\draw (1, 1) node{$R_{k.5}$};
\end{tikzpicture}
	\caption{The surgery (X) transforms a trivalent graph into a singular link.}
	\label{fig:e6}
\end{figure}

\begin{proof}[Proof of Lemma \ref{trivalent}]
Let $D_{1}$ and $D_{2}$ be the graph diagrams before and after applying a surgery (X).
We show that if the statement of Theorem \ref{sign} holds for $D_{2}$, it also holds for $D_{1}$.

As shown in Fig. \ref{fig:e6}, the interior of the newly added arc intersects $D_{1}$ at several points. We assume that the regions that the arc goes across, which we call $R_1, R_2, \cdots, R_k$, are distinct regions, otherwise we can replace the arc by an arc with less intersection points with $D_{1}$.

The arc separates each region $R_q$ for $1\leq q \leq k$ into two regions since $D_{1}$ is a connected diagram. We label the regions in $D_{2}$ around the arc by $R_1, R_{1.5}, R_2, R_{2.5}, \cdots, R_k, R_{k.5}$ as shown in Fig. \ref{fig:e6}, and label the newly created crossings by $C_1, C_2, \cdots, C_k$.
Note that $\mathrm{Cr}(D_2)=\mathrm{Cr}(D_1)\cup \{C_1, C_2, \cdots, C_k\}$. Consider an order on $\mathrm{Cr}(D_1)$ and extend it to an order on $\mathrm{Cr}(D_2)$ by requiring that
$$C_1 < C_2 <\cdots <C_k < \text{ any other crossing in $\mathrm{Cr}(D_1)$}.$$
Consider an order on $\mathrm{Re}(D_2)$ so that
$$R_1<R_{1.5}<R_{2}<R_{2.5}<\cdots<R_k<R_{k.5}.$$
Then we get an order on $\mathrm{Re}(D_1)$ by forgetting $R_{i.5}$'s.

Surgery (X) naturally induces an injective map $\phi: S(D_{1}, \delta) \to S(D_{2}, \delta)$ described as below. Given $s\in S(D_{1}, \delta)$, $\phi (s)$ sends the crossing $C_p$ to the region $R_p$ (resp. $R_{p.5}$) if $s$ does not occupy any corner in $R_p$ (resp. $R_{p.5}$), for $1\leq p \leq k$.

By considering the orders above, we can check that
$$\displaystyle \frac{\mathrm{sign} (s_{1})}{\mathrm{sign} (s_{2})}=\frac{\mathrm{sign} (\phi (s_{1})) \prod_{p=1}^{k} \mathrm{sign}_{C_p}^{\phi (s_{1})(C_{p})}}{\mathrm {sign} (\phi (s_{2}))\prod_{p=1}^{k} \mathrm {sign}_{C_p}^{\phi (s_{2})(C_{p})}}=\frac{\prod_{C_p>C_k} \mathrm{sign}_{C_p}^{\phi (s_{1})(C_{p})}}{\prod_{C_p>C_k} \mathrm{sign}_{C_p}^{\phi (s_{1})(C_{p})}}$$ for any two states $s_{1}, s_{2} \in S(D_1, \delta)$. The first equality comes from a direct calculation of the sign using the orders defined above, and the second equality follows from the assumption that the statement of Theorem \ref{sign} holds for $D_{2}$. Note that the crossings of $D_2$ that are greater than $C_k$ are exactly the crossings of $D_1$. Therefore $$\displaystyle \frac{\mathrm{sign} (s_{1})}{\mathrm{sign} (s_{2})}=\frac{\prod_{C_{p}\in \mathrm{Cr}(D_{1})} \mathrm{sign}_{C_p}^{s_{1}(C_{p})}}{\prod_{C_{p}\in \mathrm{Cr}(D_{1})} \mathrm{sign}_{C_p}^{s_{1}(C_{p})}}.$$ This completes the proof of the lemma.

\end{proof}

\begin{lemma}
\label{I}
If the statement of Theorem \ref{sign} holds for the diagram after a surgery (I), then it also holds for the one before the surgery (I) (see Fig. \ref{fig:e8}).
\end{lemma}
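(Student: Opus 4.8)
The plan is to follow the template established in the proof of Lemma \ref{trivalent}, treating move (I) as the insertion of a single \emph{forced} crossing. Let $D_1$ denote the diagram before the move and $D_2$ the diagram after it (see Fig.~\ref{fig:e8}), and recall that by hypothesis Theorem \ref{sign} holds for $D_2$. Move (I) changes the diagram by exactly one crossing $C_0$ of type \diaCrossP or \diaCrossN together with exactly one region $R_0$, the disk enclosed by the kink, so that the relation $\vert \operatorname{Re} \vert = \vert \operatorname{Cr} \vert + 2$ of Lemma \ref{reandcr} is preserved and every region other than $R_0$ of the larger diagram corresponds canonically to a region of the smaller one. Placing the initial point $\delta$ away from the kink, the key observation — already exploited in the proof of Theorem \ref{invtheo} — is that $R_0$ is adjacent to no crossing other than $C_0$, so every state is compelled to send $C_0$ to the corner lying in $R_0$. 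Hence $C_0$ is forced, and deleting this forced assignment yields a canonical bijection $\phi\colon S(D_1,\delta)\to S(D_2,\delta)$. Because the two statements differ only by the presence of this forced crossing, the direction stated in the lemma is immaterial; I will show the validity of Theorem \ref{sign} transfers across $\phi$ in either direction.

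First I would fix compatible orderings: list $C_0$ first among the crossings and $R_0$ first among the regions of the larger diagram, and order the remaining crossings and regions so that they match, under $\phi$ and the region identification, the chosen orders of the smaller diagram. Since $\mathrm{sign}(s_1)/\mathrm{sign}(s_2)$ does not depend on the choice of orders (as noted in Lemma \ref{trivalent}), this entails no loss of generality. With respect to these orders, a direct computation of the permutation sign — inserting the pair $(C_0,R_0)$ at the top of the two lists — gives
$$\mathrm{sign}(s)=\mathrm{sign}_{C_0}^{s(C_0)}\cdot \mathrm{sign}(\phi(s))$$
for every $s\in S(D_1,\delta)$, which is precisely the single-crossing specialization of the displayed identity in Lemma \ref{trivalent}, with $\mathrm{sign}_{C_0}^{s(C_0)}$ the local contribution of Fig.~\ref{fig:e7}.

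The decisive simplification is that, since $C_0$ is forced to the same corner for \emph{every} state, each of the local contributions $\mathrm{sign}_{C_0}^{s(C_0)}$, $M_{C_0}^{s(C_0)}$ and $m_{C_0}^{s(C_0)}$ is one fixed value independent of $s$. Consequently, for any two states $s_1,s_2\in S(D_1,\delta)$ these constant factors cancel in all ratios, and we obtain
$$\frac{\mathrm{sign}(s_1)}{\mathrm{sign}(s_2)}=\frac{\mathrm{sign}(\phi(s_1))}{\mathrm{sign}(\phi(s_2))}=\frac{M(\phi(s_1))\,m(\phi(s_1))}{M(\phi(s_2))\,m(\phi(s_2))}=\frac{M(s_1)\,m(s_1)}{M(s_2)\,m(s_2)},$$
where the middle equality is the hypothesis that Theorem \ref{sign} holds for $D_2$, and the outer equalities use that the constant $C_0$-contributions cancel in numerator and denominator. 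This is exactly the assertion of Theorem \ref{sign} for $D_1$.

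The step I expect to be the main obstacle is verifying the sign identity $\mathrm{sign}(s)=\mathrm{sign}_{C_0}^{s(C_0)}\cdot \mathrm{sign}(\phi(s))$ with the \emph{correct} local factor drawn from Fig.~\ref{fig:e7}: one must track precisely how prepending the pair $(C_0,R_0)$ perturbs the permutation underlying $s$ and confirm that the induced factor agrees with the tabulated contribution $\mathrm{sign}_{C_0}^{\triangle}$ at the relevant corner of a \diaCrossP or \diaCrossN crossing. Once this purely local computation is pinned down, all remaining manipulations are formal, and the cancellation of the constant $C_0$-contributions makes the reduction to $D_2$ immediate.
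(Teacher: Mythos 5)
Your proof is correct and follows essentially the same route as the paper's: both exploit the forced assignment of the kink crossing to the region enclosed by the kink to obtain a bijection of state sets, and then reduce the sign-ratio identity for the diagram before the move to the one after it. The only difference is that you track the constant local factor $\mathrm{sign}_{C_0}^{s(C_0)}$ explicitly before cancelling it in the ratios, whereas the paper asserts $\mathrm{sign}(s_1)/\mathrm{sign}(s_2)=\mathrm{sign}(\phi(s_1))/\mathrm{sign}(\phi(s_2))$ directly; this is bookkeeping, not substance.
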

\begin{proof}
Suppose the diagrams before and after a surgery (I) are $D_1$ and $D_2$. It is easy to see that the surgery (I) induces a one-one map $\phi : s(D_{1}, \delta)\to s(D_{2}, \delta)$ since $C_1$ must be mapped to $R_1$. Therefore $$\frac{\mathrm{sign} (s_{1})}{\mathrm{sign} (s_{2})}=\frac{\mathrm{sign} (\phi (s_{1}))}{\mathrm{sign} (\phi (s_{2}))},$$ the right-hand side of which, by assumption, is defined by the local contribution of sign. This completes the proof.

\end{proof}

\begin{figure}
\begin{tikzpicture}[baseline=-0.65ex, thick, scale=1.2]
\draw (0, 0) ellipse (0.5cm and 0.5cm);
\draw (-1,-1) [->] to (1,1);
\draw (1,-1) [->] to (-1,1);
\draw (0, 0.8) node{$....$};
\draw (0, -0.8) node{$....$};
\end{tikzpicture} $\xrightarrow{\text{surgery (I)}}$
\begin{tikzpicture}[baseline=-0.65ex, thick, scale=1.2]
\draw (0, -0.5) ellipse (0.5cm and 0.3cm);
\draw (-1,-1.5) [->-] to (0,-0.5);
\draw (1,-1.5) [->-] to (0,-0.5);
\draw (0,-0.5) [->-] to (0,0.5);
\draw (0, 0.5) ellipse (0.5cm and 0.3cm);
\draw (0,0.5) [->] to (1,1.5);
\draw (0,0.5) [->] to (-1,1.5);
\draw (0, 1.3) node{$....$};
\draw (0, -1.3) node{$....$};
\draw (0.45, 0.15) node{$\leftarrow C_1$};
\draw (0.7, 0.5) node{$\leftarrow R_1$};
\end{tikzpicture}
	\caption{Surgery (I).}
	\label{fig:e8}
\end{figure}

\begin{lemma}
\label{lambdaI}
If the statement of Theorem \ref{sign} holds for the diagram after a surgery ($\Lambda I$) or ($I\Lambda$), then it also holds for the one before the surgery ($\Lambda I$) or ($I\Lambda$) (see Fig. \ref{fig:e9}).
\end{lemma}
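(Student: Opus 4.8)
The plan is to reuse, essentially verbatim, the structure of the proofs of Lemma \ref{I} and Lemma \ref{trivalent}: build a natural map $\phi$ between the state sets of the two diagrams, fix a convenient global ordering of crossings and regions, and compare permutation signs by a direct computation followed by an application of the hypothesis that the statement holds for the simpler diagram. Concretely, let $D_1$ and $D_2$ be the diagrams before and after a move ($\Lambda I$); the case ($I\Lambda$) is symmetric. The move splits one vertex into two vertices joined by a new short edge, thereby creating several new crossings (the intersections of the new edge and the adjacent edges with the relevant circle regions) together with new regions.

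Following the proof of Lemma \ref{trivalent}, I would label the new regions and crossings with the smallest indices while keeping the indices of the old crossings and regions inherited from $D_1$; since $\operatorname{sign}(s_1)/\operatorname{sign}(s_2)$ does not depend on the chosen ordering, this entails no loss of generality. The move then induces an injective map $\phi : S(D_1, \delta) \to S(D_2, \delta)$ whose image consists of the states of $D_2$ compatible with the forced local behaviour along the splitting edge. With the ordering fixed, a direct sign computation gives
\[
\frac{\operatorname{sign}(s_1)}{\operatorname{sign}(s_2)} = \frac{\operatorname{sign}(\phi(s_1))\,\prod_{p\in P}\operatorname{sign}_{C_p}^{\phi(s_1)(C_p)}}{\operatorname{sign}(\phi(s_2))\,\prod_{p\in P}\operatorname{sign}_{C_p}^{\phi(s_2)(C_p)}},
\]
where $P$ indexes the newly created crossings. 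Applying the hypothesis that Theorem \ref{sign} holds for $D_2$ rewrites each $\operatorname{sign}(\phi(s_i))$ as a product of local contributions over all crossings of $D_2$; because every local sign lies in $\{1,-1\}$, the factors indexed by $P$ appear twice and cancel, leaving only the product over the crossings of large index. Since these are precisely the crossings of $D_1$, on which $\phi(s_i)$ agrees with $s_i$, I obtain $\operatorname{sign}(s_1)/\operatorname{sign}(s_2) = \prod_{C_p\in\operatorname{Cr}(D_1,\delta)}\operatorname{sign}_{C_p}^{s_1(C_p)} \big/ \prod_{C_p\in\operatorname{Cr}(D_1,\delta)}\operatorname{sign}_{C_p}^{s_2(C_p)}$, which is the assertion of Theorem \ref{sign} for $D_1$.

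The step I expect to require the most care is the first displayed equality: one must track how the permutation sign changes when passing from $s_i$ on $D_1$ to $\phi(s_i)$ on $D_2$, and verify that the correction factor is exactly the product of the local contributions $\operatorname{sign}_{C_p}^{\triangle}$ of Fig.~\ref{fig:e7} at the new crossings. This demands a careful analysis of the forced assignments along the splitting edge and of the parity introduced by inserting the new rows and columns into the chosen ordering, and it has to be checked for each orientation type occurring at the split vertex in Fig.~\ref{fig:e9}. Once this local sign bookkeeping is established, the cancellation of the $P$-indexed factors and the reduction to the crossings of $D_1$ are purely formal, exactly as in the proof of Lemma \ref{trivalent}.
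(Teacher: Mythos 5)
Your high-level strategy---transfer the sign identity along the move via an injective map between state sets and then cancel local contributions using the hypothesis on the simpler diagram---is the same architecture the paper uses, but the specific form you impose on $\phi$ does not work for this move, and that is exactly where this lemma differs from Lemma \ref{trivalent}. You assume that $\phi$ extends a state of $D_1$ by \emph{forced} assignments at the newly created crossings and is the \emph{identity} on all old crossings, and your final reduction explicitly invokes that ``$\phi(s_i)$ agrees with $s_i$'' on $\operatorname{Cr}(D_1,\delta)$. For move ($\Lambda I$) such a $\phi$ does not exist in general: in the paper's construction the naive extension (send the new crossing $C_1$ to the new region $R_1$ and change nothing else) is only available when $s(C_k)\neq R_2$. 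When $s(C_k)=R_2$, the corner of $C_k$ that realized this assignment is obstructed by the new edge, and one is forced to reroute: the paper sets $\phi(s)(C_k)=R_1$, moves $C_{k-1}$ (or $C_2$, depending on which corner $s$ used) into $R_2$, and lets $C_1$ occupy the corner thereby vacated. So $\phi(s)$ genuinely differs from $s$ at several \emph{pre-existing} crossings, in a state-dependent way. Your displayed formula, in which the correction factor runs only over the set $P$ of new crossings, and the subsequent cancellation leaving $\prod_{C_p\in\operatorname{Cr}(D_1,\delta)}\mathrm{sign}_{C_p}^{s_i(C_p)}$, therefore break down in those cases.

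You did flag the first displayed equality as the delicate step, but the difficulty is not merely the parity introduced by inserting new rows and columns; it is that the map itself must permute assignments cyclically around the move site. To repair the argument you would need to (a) define $\phi$ with the three-case analysis of Fig.~\ref{fig:e9}, (b) check in each case that $\mathrm{sign}(s)=\mathrm{sign}(\phi(s))$ (the paper's route), and (c) verify that the product of the local contributions $\mathrm{sign}_{C_p}^{\triangle}$ over the finitely many crossings where $\phi(s)$ and $s$ disagree is a constant independent of the state, so that it is absorbed into the overall sign ambiguity allowed by Theorem \ref{sign}. Step (c) is not covered by your cancellation argument, because the crossings where $\phi(s)$ and $s$ differ are old crossings, not elements of $P$.
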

\begin{proof}
We prove the lemma for surgery ($\Lambda \mathrm{I}$), and the case of surgery ($\mathrm{I}\Lambda$) can be proved similarly. Suppose the diagrams before and after surgery ($\Lambda \mathrm{I}$)) are $D_1$ and $D_2$. Consider an order on $\mathrm{Cr}(D_2)$, which induces an ordering in $\mathrm{Cr}(D_1)$ by forgetting $C_1$. Since $\mathrm{Re}(D_2)=\mathrm{Re}(D_1)\cup \{R_1\}$, an order on $\mathrm{Re}(D_2)$ also induces an order on $\mathrm{Re}(D_1)$ by forgetting $R_1$.

The surgery ($\Lambda \mathrm{I}$)) induces an injective map $\phi : s(D_{1}, \delta)\to s(D_{2}, \delta)$ as below. Given $s\in s(D_1, \delta)$, if $s(C_{k})\neq R_2$, let $\phi (s)(C_{1})=R_1$ and $\phi (s)$ maps the other crossings the same way as $s$. If $s(C_{k})= R_2$ and $s(C_{k-1})$ is the east corner of $C_{k-1}$, let $\phi (s)(C_k)=R_1$, $\phi (s)(C_{k-1})=R_2$ and $\phi (s)(C_1)$ be the east corner. If $s(C_{k})= R_2$ and $s(C_{k-1})$ is the west corner, then $s(C_2)$ must be its west corner. In this case, let $\phi (s)(C_k)=R_1$, $\phi (s)(C_2)=R_2$ and $\phi (s)(C_1)$ the west corner.

In each case, we can check that
$\mathrm{sign}(s)=\mathrm{sign}(\phi (s))$ for any $s\in s(D_1, \delta)$. The right-hand side of the equality, by assumption, is defined by the local contribution of sign. Since $\phi$ is an injective map, the local definition of sign works as well for $D_1$.

\end{proof}

\begin{figure}
\begin{tikzpicture}[baseline=-0.65ex, thick, scale=1.2]
\draw (0.5, -0.5) ellipse (0.5cm and 0.3cm);
\draw (1.5,-1.5) [->-] to (0.5,-0.5);
\draw (1,-1.5) [->-] to (0.5,-0.5);
\draw (-0.5,-1.5) [->-] to (0.5,-0.5);
\draw (0.5,-0.5) [->-] to (0,0.5);
\draw (-1.5,-1.5) [->-] to (0,0.5);
\draw (0, 0.5) ellipse (0.5cm and 0.3cm);
\draw (0,0.5) [->] to (1,1.5);
\draw (0,0.5) [->] to (-1,1.5);
\draw (0, 1.3) node{$....$};
\draw (0.5, -1.3) node{$....$};
\end{tikzpicture} $\xleftarrow{\text{surgery (I$\Lambda$)}}$
\begin{tikzpicture}[baseline=-0.65ex, thick, scale=1.2]
\draw (0, 0) ellipse (0.5cm and 0.5cm);
\draw (-1,-1) [->-] to (0,0);
\draw (0,0) [->] to (1,1);
\draw (1,-1) [->-] to (0,0);
\draw (0,0) [->] to (-1,1);
\draw (0.5,-1) [->-] to (0,0);
\draw (-0.5,-1) [->-] to (0,0);
\draw (0, 0.8) node{$....$};
\draw (0, -0.8) node{$....$};
\draw (-0.9, -0.35) node{$C_2\rightarrow$};
\draw (0.9, -0.35) node{$\leftarrow C_k$};
\end{tikzpicture} $\xrightarrow{\text{surgery ($\Lambda$I)}}$
\begin{tikzpicture}[baseline=-0.65ex, thick, scale=1.2]
\draw (-0.5, -0.5) ellipse (0.5cm and 0.3cm);
\draw (-1.5,-1.5) [->-] to (-0.5,-0.5);
\draw (-1,-1.5) [->-] to (-0.5,-0.5);
\draw (0.5,-1.5) [->-] to (-0.5,-0.5);
\draw (-0.5,-0.5) [->-] to (0,0.5);
\draw (1.5,-1.5) [->-] to (0,0.5);
\draw (0, 0.5) ellipse (0.5cm and 0.3cm);
\draw (0,0.5) [->] to (1,1.5);
\draw (0,0.5) [->] to (-1,1.5);
\draw (0, 1.3) node{$....$};
\draw (-0.5, -1.3) node{$....$};
\draw (-0.6, 0.15) node{$C_1\rightarrow$};
\draw (0.75, 0.2) node{$\leftarrow C_k$};
\draw (0.7, 0.6) node{$\leftarrow R_1$};
\draw (0.4, -0.8) node{$\leftarrow C_{k-1}$};
\draw (0.2, -0.4) node{$\leftarrow R_2$};
\draw (-1.3, -0.8) node{$C_2\rightarrow$};
\end{tikzpicture}
	\caption{Surgery ($\mathrm{I}\Lambda $) and ($\Lambda \mathrm{I}$).}
	\label{fig:e9}
\end{figure}

\newpage

\bibliographystyle{siam}
\bibliography{Alexgraph}

\end{document}